\newcommand{\FF}{{\mathbb{F}}}
\newcommand{\ZZ}{{\mathbb{Z}}}
\newcommand{\bC}{{\mathbf{C}}}
\newcommand{\bG}{{\mathbf{G}}}
\newcommand{\bH}{{\mathbf{H}}}
\newcommand{\bL}{{\mathbf{L}}}
\newcommand{\bN}{{\mathbf{N}}}
\newcommand{\bT}{{\mathbf{T}}}
\newcommand{\bw}{{\mathbf{w}}}
\newcommand{\fA}{{\mathfrak{A}}}
\newcommand{\fS}{{\mathfrak{S}}}
\newcommand{\cE}{{\mathcal{E}}}
\newcommand{\cF}{{\mathcal{F}}}
\newcommand{\ad}{{\operatorname{ad}}}
\newcommand{\Irr}{{\operatorname{Irr}}}
\newcommand{\PGL}{{\operatorname{PGL}}}
\newcommand{\PSL}{{\operatorname{PSL}}}
\newcommand{\GL}{{\operatorname{GL}}}
\newcommand{\SL}{{\operatorname{SL}}}
\newcommand{\GU}{{\operatorname{GU}}}
\newcommand{\SU}{{\operatorname{SU}}}
\newcommand{\PSp}{{\operatorname{S}}}
\newcommand{\Sp}{{\operatorname{Sp}}}
\newcommand{\GO}{{\operatorname{GO}}}
\newcommand{\SO}{{\operatorname{SO}}}
\newcommand{\Chevie}{{\sf Chevie}}
\newcommand{\tB}{{\tilde B}}
\newcommand{\tD}{{\tilde D}}
\newcommand{\tG}{{\tilde G}}
\newcommand{\tw}[1]{{}^{#1}\!}
\newcommand{\flr}[1]{{\lfloor #1\rfloor}}
\newcommand{\Ph}[1]{\Phi_#1}
\let\al=\alpha
\let\eps=\epsilon
\let\la=\lambda
\newtheorem{thm}{Theorem}[section]
\newtheorem{lem}[thm]{Lemma}
\newtheorem{prop}[thm]{Proposition}
\newtheorem{cor}[thm]{Corollary}
\newtheorem*{thmA}{Theorem 1}
\newtheorem*{thmB}{Theorem 2}
\newtheorem*{thmC}{Theorem 3}
\theoremstyle{remark}
\newtheorem{rem}[thm]{Remark}
\begin{document}

\title[Brauer's $k(B)$-conjecture]{On a minimal counterexample\\ to Brauer's $k(B)$-conjecture}

\date{\today}

\author{Gunter Malle}
\address{FB Mathematik, TU Kaiserslautern, Postfach 3049,
         67653 Kaisers\-lautern, Germany.}
\email{malle@mathematik.uni-kl.de}

\thanks{The author gratefully acknowledges financial support by SFB TRR 195.}

\keywords{Number of simple modules, Brauer's $k(B)$-conjecture, blocks of simple groups}

\subjclass[2010]{20C15, 20C33}

\begin{abstract}
We study Brauer's long-standing $k(B)$-conjecture on the number of characters
in $p$-blocks for finite quasi-simple groups and show that their blocks do not
occur as a minimal counterexample for $p\ge5$ nor in the case of abelian
defect. For $p=3$ we obtain that the principal 3-blocks do not provide minimal
counterexamples. We also determine the precise number of irreducible characters
in unipotent blocks of classical groups for odd primes.
\end{abstract}

\maketitle


\section{Introduction} \label{sec:intro}

In his 1954 address at the International Congress in Amsterdam Richard Brauer
posed a list of fundamental problems in representation theory
of finite groups \cite{Br54}, many of which are still open. One among them is
his conjecture on the number $k(B)$ of irreducible complex characters in a
$p$-block $B$ of a finite group: this number $k(B)$ should be at most equal
to the order $|D|$ of a defect group $D$ of $B$. Brauer and Feit \cite{BF59}
already showed that $k(B)\le p^{2d-2}$ if $|D|=p^d$, but to the present day,
no general upper bound linear in $|D|$ is known for $k(B)$. For $p$-solvable
groups the $k(B)$-conjecture was shown by Nagao to reduce to the coprime
$k(GV)$-problem, which was finally settled in 2004, see \cite{kGV}. In fact,
using this result Robinson \cite[Thm.~1]{Ro04} showed that for $p$-solvable
groups we even have $k(B)<|D|$ whenever $D$ is non-abelian.

This motivates to consider the following strong form of Brauer's
$k(B)$-conjecture:
\begin{itemize}
  \item[\qquad\qquad\qquad]
 \emph{Let $B$ be a block of a finite group with defect group $D$.\\
  Then $k(B)\le |D|$, with strict inequality unless $D$ is abelian.}
\end{itemize}

In this paper we investigate a possible minimal counterexample to this
conjecture, which by the results cited above must necessarily be non-solvable.
This focuses attention on the non-abelian simple groups and their covering
groups. Our main result is:

\begin{thmA}
 Let $p\ge5$ be a prime and $B$ a $p$-block of a finite quasi-simple
 group $G$. Then $B$ is not a minimal counterexample to the strong form of
 Brauer's $k(B)$-conjecture.
\end{thmA}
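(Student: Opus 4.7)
My plan is to invoke the classification of finite simple groups and treat each family of quasi-simple groups $G$ in turn, reducing the count $k(B)$ to manageable combinatorial or Lie-theoretic data. Fix $p\geq5$ and assume for contradiction that $B$ is a minimal counterexample, so either $k(B)>|D|$, or $k(B)=|D|$ with $D$ non-abelian. A preliminary pass should handle blocks of cyclic defect (where Dade's theorem gives the precise value of $k(B)$), blocks dominated by a $p$-solvable normal subgroup (where Nagao's reduction and the $k(GV)$-theorem from \cite{kGV} apply), and Morita-equivalent situations coming from central extensions; these dispose of many easy sub-cases at once.

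Next I would dispatch the ``rigid'' families. For alternating groups and their Schur covers, both $|D|$ and $k(B)$ are read off from the $p$-core and $p$-quotient of the partitions labelling characters in $B$, and the comparison reduces to a hook-length estimate that is comfortable for $p\geq5$. The sporadic simple groups and the Tits group involve only finitely many blocks, so here the verification is computational, via \Chevie{} and the \textsf{ATLAS}. For groups of Lie type in their defining characteristic $p$, only the principal block has positive defect: its defect group is a Sylow $p$-subgroup and $k(B_0)$ coincides with the number of semisimple conjugacy classes in the dual group $\bG^*$, which is easily bounded by $|D|$.

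The substantive part concerns groups of Lie type in non-defining characteristic. Using the Bonnaf\'e--Rouquier Morita equivalence together with the $e$-Harish-Chandra theory of Brou\'e--Malle--Michel, an arbitrary $p$-block $B$ of $G$ becomes Morita equivalent to a unipotent $p$-block of a suitable $e$-split Levi subgroup, where $e$ is the order of $q$ modulo $p$. Combined with the Cabanes--Enguehard description of the defect group as essentially a Sylow $\Ph{e}$-torus extended by a $p$-subgroup of the relative Weyl group, this reduces the theorem to bounding $k(B)$ for unipotent blocks alone. Exceptional types can be handled case-by-case from the tabulated list of unipotent characters in \Chevie{}; the genuinely hard step is the classical groups $\SL_n$, $\SU_n$, $\Sp_{2n}$ and $\SO_n^\pm$, together with their spin and other covering groups.

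The main obstacle is thus the precise count of irreducible characters in unipotent $p$-blocks of classical groups, which is exactly what the last sentence of the abstract announces. My approach would be combinatorial: unipotent characters are parametrized by Lusztig symbols, unipotent $p$-blocks correspond to fixed $e$-cores of such symbols, and $k(B)$ equals the number of symbols with the prescribed $e$-core; comparing this count with the $\Ph{e}$-part of the relevant centralizer order (which controls $|D|$) should yield $k(B)\leq|D|$ in general, with strict inequality whenever $D$ is non-abelian. I expect the delicate points to be the low-rank boundary cases, the twisted types where the symbol parametrization is asymmetric, and tracking the precise inequality through the passage to spin and other exceptional covering groups.
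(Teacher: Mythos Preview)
Your outline has two genuine gaps, both in the non-defining characteristic step.

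First, the Bonnaf\'e--Rouquier (now Bonnaf\'e--Dat--Rouquier) Morita equivalence does \emph{not} reduce an arbitrary $p$-block to a unipotent block of a Levi subgroup. It only applies when the semisimple label $s$ of the block has $C_{\bG^*}^\circ(s)$ contained in a proper $F$-stable Levi subgroup, i.e.\ when $s$ is \emph{not isolated}. For isolated non-unipotent $s$ there is no such reduction, and these blocks must be handled separately. The paper closes this gap via Enguehard's Jordan-decomposition result \cite{En08}, which for good primes $\ell\ge3$ matches an isolated non-unipotent $\ell$-block with a block of a strictly smaller group having the same invariants; since $p\ge5$ is good for every type except $E_8$, this leaves only the isolated $5$-blocks of $E_8(q)$, which require a further ad hoc analysis (Proposition~\ref{prop:E8 l=5}). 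Your plan, as written, simply omits the isolated non-unipotent case.

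Second, your proposed count of $k(B)$ for a unipotent block is incomplete. The number of symbols with prescribed $e$-core is only $|\Irr(B)\cap\cE(G,1)|$, the unipotent characters in $B$. But $\Irr(B)$ is contained in $\cE_\ell(G,1)=\coprod_t\cE(G,t)$ with $t$ running over conjugacy classes of $\ell$-elements in $G^*$, and the non-trivial $t$ contribute the bulk of the characters. The paper's actual count (Propositions~\ref{prop:kB BC} and~\ref{prop:kB D}, following Olsson) sums over all such $t$ the number of unipotent characters of $C_{G^*}(t)$ lying in the corresponding block, and this is what must be bounded by $|D|$. Counting only the $e$-core combinatorics would give a dramatic undercount and no valid inequality.
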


Here, $(G,B)$ is a called a minimal counterexample if the conjecture holds
for all $p$-blocks $B_1$ of groups $G_1$ with $|G_1/Z(G_1)|$ strictly
smaller than $|G/Z(G)|$ having defect groups isomorphic to those of $B$. We
also show that blocks with abelian defect cannot lead to minimal
counterexamples (in view of Theorem~1, this concerns the primes $p=2,3$):

\begin{thmB}
 Let $p$ be a prime and $B$ a $p$-block of a finite quasi-simple group $G$
 with abelian defect groups. Then $B$ is not a minimal counterexample to
 Brauer's $k(B)$-conjecture.
\end{thmB}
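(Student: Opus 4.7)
My plan is to verify the stronger statement that $k(B)\leq |D|$ holds for every $p$-block $B$ of a quasi-simple group $G$ with abelian defect group $D$, which immediately implies that $(G,B)$ is not a minimal counterexample. Since Theorem~1 already handles the case $p\geq 5$ for all defect types, the content to address is $p\in\{2,3\}$ with $D$ abelian.

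The main reduction I would exploit is the following chain. By the Kessar--Malle direction of Brauer's height zero conjecture, every character in a block with abelian defect has height zero, so $k(B)=k_0(B)$. Combined with the Alperin--McKay conjecture, which is established in a great many cases of abelian defect (notably via Brou\'e's abelian defect group conjecture and its verified instances for unipotent blocks), one obtains $k_0(B)=k_0(b)$, where $b$ is the Brauer correspondent of $B$ in $N_G(D)$. The block $b$ has $D$ as a normal defect group with $p'$-inertial quotient $E=N_G(D,b)/DC_G(D)$ acting faithfully on $D$, so the (generalised) $k(GV)$-theorem yields $k_0(b)\leq |D|$, which closes the argument whenever the reduction applies.

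The remaining work is a case-by-case verification over the families of quasi-simple groups that this chain can be carried out, or that $k(B)\leq|D|$ holds by direct inspection. For alternating groups and their covers I would use Chuang--Rouquier's derived equivalence between abelian-defect blocks of $\fS_n$ and their Brauer correspondents to compute $k(B)$ exactly; for sporadic groups, the character tables in the ATLAS provide a finite check; for groups of Lie type in defining characteristic, the list with abelian Sylow $p$-subgroup is essentially $\PSL_2$ plus a short list of small rank exceptions; and for groups of Lie type in non-defining characteristic, I would apply Bonnaf\'e--Rouquier Morita equivalences to reduce to quasi-isolated blocks, then invoke the explicit character counts for unipotent blocks of classical groups established elsewhere in this paper.

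The main obstacle is the last family. For non-defining characteristic at $p\in\{2,3\}$, Alperin--McKay is not uniformly available for every quasi-isolated block, and the inertial quotients of $2$- and $3$-blocks in exceptional types can be complicated enough that a black-box $k(GV)$ bound is not always decisive. Here one has to rely on the explicit description of $k(B)$ in terms of $e$-cuspidal data, together with Ennola duality, to verify the inequality type by type, paying particular attention to the quasi-isolated blocks of exceptional groups at the bad primes and to the $2$-blocks of maximal defect, which is where any putative minimal counterexample would have to hide.
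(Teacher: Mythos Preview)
Your proposal has a genuine gap, and it stems from aiming at the wrong target. You set out to prove $k(B)\le|D|$ outright for every abelian-defect block of a quasi-simple group, but the paper explicitly does \emph{not} achieve this (see the introduction: ``our methods fall short of verifying the $k(B)$-conjecture for all blocks of quasi-simple groups''). What is proved is only that $B$ cannot be a \emph{minimal} counterexample, and minimality is used essentially: the Bonnaf\'e--Dat--Rouquier reduction (Lemma~\ref{lem:not qi}) and the isotypy result for abelian defect unipotent blocks (Lemma~\ref{lem:ab def}) both transfer the problem to a block of a strictly smaller group without establishing $k(B)\le|D|$ directly.

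Your main tool, the Alperin--McKay conjecture, is not available in the cases that matter, and you acknowledge this yourself. The paper bypasses this entirely. After the reductions above, one is left with isolated blocks at $p\in\{2,3\}$, and here the paper uses two specific structural inputs that your sketch does not mention. For $p=2$ the key is \cite[Lemma~5.2]{EKKS}: a quasi-isolated $2$-block with abelian defect either has defect groups of rank at most~$2$ (so Sambale's Theorem~\ref{thm:sam} applies) or $G$ is of type~$A$ and the block is not isolated (so Lemma~\ref{lem:not qi} applies). For $p=3$ the paper invokes the explicit classification of isolated $3$-blocks with abelian defect from the tables in \cite{KM13} (Proposition~\ref{prop:l=3}): only one configuration of rank~$\ge4$ survives, in $E_8(q)$ with centraliser of type $\tw2A_4(q)^2$, and it is handled directly. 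Your fallback plan of ``explicit description of $k(B)$ in terms of $e$-cuspidal data \dots\ type by type'' gestures in this direction but misses these two decisive ingredients, without which the $p=2$ case in particular is not tractable.
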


We also obtain strong restrictions for the primes $p=2,3$:

\begin{thmC}
 Assume that a $p$-block $B$ of a finite quasi-simple group $G$ is a minimal
 counterexample to Brauer's $k(B)$-conjecture. Then $p\le3$, $G$ is of Lie
 type in characteristic not $p$, $B$ is an isolated block of $G$, the defect
 groups of $B$ are non-abelian, and either $p=2$ or $B$ is not unipotent.
 In particular, the principal 3-block is not a minimal counterexample.
\end{thmC}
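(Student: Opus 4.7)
The plan is to combine Theorems~1 and~2 with a pass through the classification of finite simple groups, apply the Bonnaf\'e--Dat--Rouquier Morita equivalence to reduce to isolated blocks, and finally invoke an explicit character count in unipotent blocks at the prime~$3$.

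First, by Theorem~1 any minimal counterexample must have $p\le3$, and by Theorem~2 its defect group~$D$ must be non-abelian; these two observations already give the clauses ``$p\le3$'' and ``$D$ non-abelian'' in the conclusion. It remains to force $G$ to be of Lie type in characteristic $\ne p$, to force $B$ to be isolated, and, when $p=3$, to exclude unipotent blocks.

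Next I would dispose of the non-Lie-type quasi-simple possibilities for~$G$ and of Lie type in defining characteristic. For the sporadic groups and their covering groups, $k(B)$ for $p\in\{2,3\}$ is read off from the GAP character table library and checked directly against $|D|$. For the alternating groups $\fA_n$ and their double covers, the combinatorial description of $p$-blocks via $p$-cores and $p$-quotients gives a closed-form expression for $k(B)$ from which the strict bound $k(B)<|D|$ in the non-abelian defect case is extracted. For quasi-simple groups of Lie type in defining characteristic~$p$, the $p$-block structure is extremely restricted (essentially only the principal block carries positive defect), and standard character-degree bounds yield $k(B)<|D|$. After these steps only quasi-simple groups $G$ of Lie type in characteristic $\ne p$ survive.

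I would then reduce to isolated blocks. If $G$ is quasi-simple of Lie type in characteristic $\ell\ne p$ and $B$ is a non-isolated $p$-block of $G$, the Bonnaf\'e--Dat--Rouquier theorem produces a Morita equivalence between $B$ and a $p$-block $B_1$ of the group of rational points of a proper $e$-split Levi subgroup, with an isomorphic defect group. In particular $k(B)=k(B_1)$, and passing to a quasi-simple constituent of~$B_1$ yields a block in a group strictly smaller modulo centre than~$G$ with the same defect; this contradicts minimality. Hence $B$ must be isolated.

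The main obstacle is the last step: ruling out unipotent $3$-blocks. Using the precise counts of characters in unipotent blocks of classical groups at odd primes that are developed elsewhere in the paper, together with explicit \Chevie\ data for the exceptional types, one verifies directly that for $p=3$ every unipotent block of a quasi-simple group of Lie type in characteristic $\ne3$ with non-abelian defect satisfies $k(B)<|D|$, so is not a counterexample at all. Since the principal $p$-block is always unipotent, this in turn yields the final assertion that the principal $3$-block is never a minimal counterexample.
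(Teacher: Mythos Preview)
Your outline is correct and follows the paper's own route: Theorems~1 and~2 supply $p\le3$ and non-abelian defect; Proposition~\ref{prop:spor}, Olsson's theorem for alternating groups, and Theorem~\ref{thm:lie-p} force $G$ to be of Lie type in non-defining characteristic; Lemma~\ref{lem:not qi} (via Bonnaf\'e--Dat--Rouquier) forces $B$ isolated; and Theorems~\ref{thm:SLn}, \ref{thm:class}, \ref{thm:unip exc} exclude unipotent blocks at odd primes, whence the principal $3$-block claim.

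One wrinkle worth correcting: in your isolated-block reduction you write ``passing to a quasi-simple constituent of~$B_1$ yields a block \dots\ with the same defect''. This extra step is both unnecessary and wrong as stated. It is unnecessary because the paper's notion of minimal counterexample ranges over \emph{all} finite groups $G_1$ with $|G_1/Z(G_1)|<|G/Z(G)|$ and isomorphic defect group, not just quasi-simple ones; so the Morita-equivalent block $B_1$ on the Levi side (more precisely on $\bN^F$) already contradicts minimality directly, exactly as in Lemma~\ref{lem:not qi}. And it is wrong as stated because a single quasi-simple component of a product-type Levi will in general carry a strictly smaller defect group than~$B_1$. Simply delete that clause and your argument goes through unchanged.
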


The proofs will be given in the subsequent sections, using the classification
of finite simple groups in conjunction with Lusztig's theory of characters of
finite reductive groups. While our methods fall short of verifying the
$k(B)$-conjecture for all blocks of quasi-simple groups, since they partly
rely on Bonnaf\'e--Rouquier type reduction arguments to rule out a minimal
counterexample, in the most interesting case of unipotent blocks with $p>2$
and non-abelian defect our arguments actually show that these do satisfy the
$k(B)$-conjecture.

\begin{rem}   \label{rem:rest}
Let us comment on the cases left open by our results. For $p=2,3$ the isolated
blocks of all quasi-simple groups of Lie type remain to be considered. For
$p=2$, already the case of unipotent blocks of $\SL_n(q)$ seems hard.
\end{rem}

\begin{rem}   \label{rem:abelian}
If $B$ is a block with abelian defect group $D$, then according to (the
proven direction of) Brauer's height zero conjecture all characters in
$\Irr(B)$ are of height zero. By the Alperin--McKay conjecture they should be
in bijection with the height zero characters of the Brauer corresponding block
$b$ of $N_G(D)$ (which also has defect group $D$), whence $k(B)=k(b)$. Thus,
assuming the validity of the Alperin--McKay conjecture, a block $B$ with
non-normal abelian defect group cannot be a minimal counterexample to the
$k(B)$-conjecture. The interesting situation hence rather seems to be the one
of non-abelian defect groups; see also the recent result of Sambale recalled in
Theorem~\ref{thm:sam}.
\end{rem}

No reduction of the general conjecture to the case of (quasi-)simple groups
has been found so far; see Navarro's article \cite{Na17} for some thoughts and
ideas in that direction.
\medskip

The paper is built up as follows: In Section~\ref{sec:non-lie} we settle the
case of quasi-simple groups not of Lie type, mostly by collecting results from
the literature. In Section~\ref{sec:lie-p} we prove our main theorem for groups
of Lie type in their defining characteristic. The by far most complicated case,
groups of Lie type in cross characteristic, is then considered, after some
general reductions in Section~\ref{sec:lie-nondef}, in
Section~\ref{sec:class} where we deal with classical groups at odd primes,
and in Section~\ref{sec:exc} where we concentrate on groups of exceptional
type. On the way we also extend the results of Olsson \cite{Ol84}
to derive explicit formulas for the number of characters in unipotent blocks,
which may be of independent interest (see Propositions~\ref{prop:kB BC}
and~\ref{prop:kB D}). The proof of Theorems~1, 2 and~3 is given at the end of
Section~\ref{subsec:E8}.
\medskip

\noindent{\bf Acknowledgement:} I thank Gabriel Navarro for helpful
correspondence on the topic of the paper and Jay Taylor for his remarks on an
earlier version.

\section{Groups not of Lie type} \label{sec:non-lie}

In this section we consider the quasi-simple groups $G$ such that $G/Z(G)$ is
not of Lie type. Here, and later on, we will make use of the following result
\cite[Thm.~A]{S17}:

\begin{thm}[Sambale]   \label{thm:sam}
 The $k(B)$-conjecture holds for all blocks $B$ with abelian defect groups
 of rank at most~3.
\end{thm}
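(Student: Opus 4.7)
The plan is to proceed by induction on the rank $r$ of the abelian defect group $D$. For $r=0$, $B$ has defect zero and $k(B)=1\le|D|=1$. For $r=1$, $D$ is cyclic and Dade's theory of blocks with cyclic defect groups yields $k(B)=e+(|D|-1)/e$ with $e$ dividing $p-1$, hence $k(B)\le|D|$. The rank-$2$ case is known from earlier work of Brauer--Feit, Olsson, K\"ulshammer and others, so the substantive case is $r=3$.

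For the inductive step I would invoke Brauer's subsection-counting formula
\[
  k(B)=\sum_{(u,b_u)}l(b_u),
\]
where $(u,b_u)$ runs over $G$-conjugacy class representatives of $B$-subsections. Since $D$ is abelian, $C_D(u)=D$ for every $u\in D$, so each $b_u$ has defect group $D$ and the subsections are parameterized by $\cF$-conjugacy classes in $D$, where $\cF$ denotes the fusion system of $B$ on $D$. By Burnside's fusion theorem, applicable because $D$ is abelian, $\cF$ is controlled by the inertial quotient $E:=N_G(D)/C_G(D)$, which embeds into $\operatorname{Aut}(D)$ as a $p'$-subgroup.

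The argument then splits into two parts. First, I would bound $l(b_u)$ in terms of the stabilizer $E_u\le E$ via a Fong--Reynolds-type reduction together with Clifford theory, obtaining in effect $l(b_u)\le k(E_u)$. Second, the combinatorial inequality
\[
  \sum_{[u]\in D/\cF}l(b_u)\le|D|
\]
has to be verified, the sum running over $\cF$-orbits in $D$. Rewriting it via Burnside's orbit-counting lemma turns it into an averaged inequality for the action of $E$ on $D$, which is essentially the coprime $k(GV)$-type content settled in \cite{kGV}.

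The main obstacle is that when $E$ acts with very few orbits on $D$---for instance transitively on $D\setminus\{1\}$---the contribution $l(b_1)=l(B)$ dominates and must be controlled sharply; similar tight control is required for intermediate orbit patterns. For $r\le3$ the possibilities for $E\le\operatorname{Aut}(D)$ are finite, and when $D$ is elementary abelian this amounts to classifying $p'$-subgroups of $\GL_r(\FF_p)$ with $r\le3$; the remaining inequalities can then be verified case by case, possibly with computer assistance. This explicit case analysis, combined with refined upper bounds on $l(B)$ coming from the Cartan matrix of $B$, constitutes the technical heart of Sambale's proof.
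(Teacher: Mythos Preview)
The paper does not give a proof of this statement. Theorem~\ref{thm:sam} is quoted as a result of Sambale, with the citation \cite[Thm.~A]{S17}, and is used throughout the paper purely as a black box (see the sentence introducing it: ``we will make use of the following result''). The only additional remark the paper makes is that Brauer had already handled abelian defect groups of rank at most~$2$. There is therefore no proof in this paper against which your proposal can be compared.

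Your sketch is a plausible outline of the kind of argument that underlies Sambale's work (subsection counting via $k(B)=\sum l(b_u)$, control of fusion by the inertial quotient for abelian $D$, and a case analysis of the $p'$-subgroups $E\le\operatorname{Aut}(D)$ combined with Cartan-matrix bounds on $l(B)$), but verifying whether it matches \cite{S17} would require consulting that reference, not the present paper.
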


Brauer had previously shown his conjecture for abelian defect groups of rank
at most~2. Moreover, we have the following reduction, relying on a result
of Navarro:

\begin{thm}[Navarro]   \label{thm:nav}
 The $k(B)$-conjecture (in strong form) holds for all $p$-blocks of a
 quasi-simple group $G$ if it holds for the faithful $p$-blocks of
 $p'$-covering groups of $G/Z(G)$.
\end{thm}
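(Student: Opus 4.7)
My plan is to descend the block $B$ through the kernel of its central character, reducing the general case to a faithful $p$-block of a $p'$-covering group of $S:=G/Z(G)$.

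I would write $Z(G)=Z_p\times Z_{p'}$ for the Sylow decomposition and decompose the central character of $B$ on $Z(G)$ as $\lambda=\lambda_p\cdot\lambda_{p'}$. Every character in $\Irr(B)$ vanishes on $K:=\ker\lambda_p\times\ker\lambda_{p'}$, so $B$ inflates from a block $B_1$ of the quasi-simple quotient $G/K$, with $k(B_1)=k(B)$ and defect group $D_1$ of order $|D|/|\ker\lambda_p|$ (using $\ker\lambda_p\le Z_p\le D$ and that $\ker\lambda_{p'}$ is a $p'$-group, so $\ker\lambda_{p'}\cap D=1$). If $\ker\lambda_p\ne 1$, then $|D_1|<|D|$, and an induction on $|Z(G)|$ applied to $B_1$ yields $k(B)=k(B_1)\le|D_1|<|D|$, establishing the strong conjecture for $B$ directly. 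So I may assume that both $\lambda_p$ and $\lambda_{p'}$ are faithful.

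Next I split into subcases. If $Z_p=1$, then $G$ is itself a $p'$-covering group of $S$ and $B$ is a faithful $p$-block, so the hypothesis applies directly and gives the conclusion. The remaining case---and the principal obstacle---is $Z_p\ne 1$ with $\lambda_p$ faithful, which forces $Z_p$ cyclic. Here I cannot inflate $B$ from $G/Z_p$, because characters in $B$ do not vanish on $Z_p$, nor can I tensor $B$ with $\lambda_p^{-1}$, since the quasi-simple group $G$ admits no non-trivial linear character.

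To overcome this obstacle, my plan is to invoke a central character triple isomorphism of Dade/Isaacs--Navarro type: the triple $(G,Z_p,\lambda_p)$ should be projectively equivalent to a triple $(G^*,1,1)$ for some quasi-simple $p'$-cover $G^*$ of $S$, in such a way that $B$ corresponds to a block $B^*$ of $G^*$ with $k(B^*)=k(B)$ and a defect group of the same order as $D$. Concretely, the $p$-power-order cocycle in $H^2(\tilde S,\bC^\times)$ determined by the extension $1\to Z_p\to G\to\tilde S\to 1$ (with $\tilde S:=G/Z_p$) together with $\lambda_p$ is to be absorbed by a projective modification placing the block in a $p'$-covering group of $S$. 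The hypothesis applied to $B^*$ then yields $k(B)\le|D|$ with the required strict inequality when $D$ is non-abelian, completing the reduction.
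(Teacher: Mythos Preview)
There is a genuine gap at the very first step. A $p$-block $B$ does \emph{not} determine a well-defined complex linear character $\lambda_p$ on $Z_p$: distinct $\chi\in\Irr(B)$ can restrict to different linear characters of the central $p$-subgroup $Z_p$. (Already for $G=C_p$ the unique block contains all $p$ irreducible characters, each with a different restriction to $Z_p$.) The block's central character is only well-defined modulo~$p$, and on $Z_p$ every $p$-power root of unity reduces to~1; hence the congruence condition imposes no constraint there. So your assertion that every $\chi\in\Irr(B)$ has $\ker\lambda_p$ in its kernel is false, and $B$ need not inflate from $G/K$. Your later case split (``$\lambda_p$ faithful'') is therefore not meaningful, and the proposed character-triple argument at the end---which in any case does not come with a mechanism preserving defect groups---is not reached on solid ground.

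The paper sidesteps this entirely. Rather than trying to isolate a $p$-part of the central character, it quotients by \emph{all} of $O_p(G)=Z_p$ at once: there is a unique block $B_1$ of $G_1:=G/O_p(G)$ contained in $B$, and the key nontrivial input is Navarro's result \cite[Thm.~C]{Na17}, which says that the (strong) $k(B)$-conjecture for $B_1$ implies it for $B$. This is precisely the statement that replaces your attempted $\lambda_p$-reduction and your character-triple manoeuvre. Once $O_p(G)$ is factored out, $G_1$ is a $p'$-cover of $G/Z(G)$, and then one mods out by the (now $p'$-)kernel $K$ of the characters in $B_1$ to reach a faithful block with the same $k$ and isomorphic defect groups---this last step is essentially your $\ker\lambda_{p'}$ reduction, which is correct.
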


\begin{proof}
Let $B$ be a $p$-block of $G$. Set $G_1=G/O_p(G)$ and let $B_1$ be the unique
$p$-block of $G_1$ contained in $B$. Then by \cite[Thm.~C]{Na17}, $B$ satisfies
the $k(B)$-conjecture if $B_1$ does. So it suffices to consider $p'$-coverings
of $G/Z(G)$. For $G$ such a covering group let $K$ be the intersection of the
kernels of the characters in $B$ and $B_1$ the $p$-block of $G_1=G/K$ contained
in $B$. Then we have $k(B)=k(B_1)$ and the defect groups are isomorphic as $K$
is a $p'$-subgroup, so again the validity of the $k(B)$-conjecture for $B_1$
implies it for~$B$.
\end{proof}

\begin{prop}   \label{prop:spor}
 Let $G$ be quasi-simple such that $S=G/Z(G)$ is one of:
 \begin{enumerate}
  \item[\rm(1)] a sporadic simple group;
  \item[\rm(2)] an alternating group $\fA_6$ or $\fA_7$;
  \item[\rm(3)] an exceptional covering group of a simple group of Lie type; or
  \item[\rm(4)] the Tits group $\tw2F_4(2)'$.
 \end{enumerate}
 Then the $k(B)$-conjecture in the strong form holds for all $p$-blocks of $G$
 for all primes $p$.
\end{prop}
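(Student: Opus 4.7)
The plan is to reduce to an explicit finite list of quasi-simple groups and then verify the inequality block by block using the tabulated character-theoretic data.

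First, by Theorem~\ref{thm:nav} it suffices to consider the faithful $p$-blocks of $p'$-coverings of $S$. In each of the four cases this restricts the discussion to a finite and explicit list of groups: in (1) the twenty-six sporadic simple groups together with their Schur covers; in (2) the groups $\fA_6$ and $\fA_7$ together with their double, triple, and sextuple covers; in (3) the finitely many simple groups of Lie type whose Schur multiplier strictly exceeds the generic value, as enumerated in the ATLAS; and in (4) the Tits group $\tw2F_4(2)'$ together with its double cover. For each of these groups and every prime $p$ dividing the order, the ordinary character table, the distribution of ordinary characters into $p$-blocks, and the isomorphism type of the defect group of each block are known and are available through the (Modular) ATLAS and the {\sf GAP} character table library.

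The verification then proceeds block by block. Blocks of defect zero are trivial. All blocks with abelian defect of rank at most three are handled by Theorem~\ref{thm:sam}; this in particular disposes of every cyclic defect block. What remains in each group is a finite number of blocks whose defect group is either non-abelian or abelian of rank at least four, and for each such block one simply compares the tabulated value of $k(B)$ with $|D|$ to verify $k(B)\le |D|$, with strict inequality in the non-abelian case.

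The main obstacle I expect will be the bookkeeping for the largest sporadic groups, notably the Baby Monster and the Monster, and for the multiply-covered Lie-type groups such as the various Schur covers of $\PSU_4(3)$ and $\PSU_6(2)$, where many blocks must be inspected. Nevertheless, since only the ordinary block distribution and the isomorphism types of the defect groups are needed, and both are documented in the literature, the verification reduces to a finite and essentially routine comparison of integers.
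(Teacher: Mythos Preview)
Your proposal is correct and follows essentially the same route as the paper: reduce to a finite explicit list, appeal to the known character tables (in GAP), invoke Sambale's result (Theorem~\ref{thm:sam}) to discard all blocks with abelian defect of rank at most~3, and check the remaining blocks directly. The paper's proof is slightly leaner in that it does not bother with the reduction via Theorem~\ref{thm:nav} and only needs the \emph{defect} of each block (computable from the character table alone) rather than the full isomorphism type of the defect group, but this is a cosmetic difference.
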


\begin{proof}
The character tables of all the groups $G$ in the statement are known and
available in GAP \cite{GAP}; since the block-subdivision of $\Irr(G)$ and
the defect of a block can be computed from a knowledge of the character
table, the claim can easily be verified automatically. In fact, by
Theorem~\ref{thm:sam} the $k(B)$-conjecture holds for blocks with
abelian defect of rank at most~3, so only very few cases remain to be
considered.
\end{proof}

\begin{thm}[Olsson]
 Let $G$ be a covering group of the alternating group $\fA_n$, $n\ge5$. Then
 the $k(B)$-conjecture in the strong form holds for all $p$-blocks of
 $G$ for all primes~$p$.
\end{thm}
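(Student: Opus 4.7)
The plan is to reduce to the two infinite families of covering groups via Theorems~\ref{thm:sam} and~\ref{thm:nav}, and then to invoke Olsson's combinatorial analysis of blocks of the symmetric and alternating groups and their spin covers.

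By Proposition~\ref{prop:spor}(2)--(3) we may assume $n\ge8$; then the $p'$-coverings of $\fA_n$ are $\fA_n$ itself and, when $p$ is odd, additionally the double cover $2.\fA_n$. By Theorem~\ref{thm:nav} it suffices to verify the strong conjecture for the faithful $p$-blocks of these two groups---for $\fA_n$ every block is faithful, and for $2.\fA_n$ with $p$ odd the faithful blocks are exactly the spin blocks. The $p$-blocks of $\fS_n$ are parametrised via Nakayama's conjecture by pairs $(\gamma,w)$ with $\gamma$ a $p$-core and $|\gamma|+pw=n$; then $k(B)$ equals the number $k_p(w)$ of $p$-multipartitions of $w$, while a defect group $D$ is a Sylow $p$-subgroup of $\fS_{pw}$, of order $p^{a(pw)}$ where $a(m)=\sum_{i\ge1}\flr{m/p^i}$. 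The required inequality $k_p(w)\le p^{a(pw)}=|D|$, strict precisely when $w\ge p$ (i.e., when $D$ is non-abelian), is the combinatorial heart of Olsson's approach; it rests on the generating identity $\sum_{w\ge0}k_p(w)t^w=\prod_{i\ge1}(1-t^i)^{-p}$ together with the recursion $a(pw)=w+a(w)$.

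To pass from $\fS_n$ to $\fA_n$ I would use standard Clifford theory: each block $B$ of $\fS_n$ either restricts to a single block $B'$ of $\fA_n$ with $k(B')\le k(B)$ and with defect group of index at most~$2$ in $D$, or splits into two $\fA_n$-blocks of equal character count and defect group still $D$. In either case the bound $k(B')\le|D(B')|$ survives, and strict inequality persists in the non-abelian range since the only $p$-power possibly lost is~$2$ and is then offset by the character splitting.

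The faithful spin blocks of $2.\fA_n$ at odd primes are treated by the parallel theory developed by Morris, Humphreys and Olsson: spin blocks are classified by $\bar p$-cores of bar partitions together with a weight $w$, the number $k(B)$ is expressed via tuples of ordinary and strict partitions of total size $w$, and a defect group coincides (up to the central involution) with a Sylow $p$-subgroup of $\fS_{pw}$. Olsson's bar-partition estimates then yield $k(B)\le|D|$, strict for $w\ge p$. The main obstacle is exactly this combinatorial inequality $k_p(w)\le p^{a(pw)}$ and its bar-partition analogue: pinning strictness down to the precise non-abelian range requires Olsson's delicate majorisation arguments on partition generating functions and does not appear to admit an obvious shortcut.
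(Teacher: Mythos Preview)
Your reduction strategy matches the paper's exactly: strip off the exceptional covers via Proposition~\ref{prop:spor}, use Theorem~\ref{thm:nav} to reduce to $p'$-covers (so that the faithful $2$-blocks of $2.\fA_n$ disappear), and then invoke Olsson's work for $\fA_n$ and for the spin blocks of $2.\fA_n$ at odd primes. The paper, however, simply \emph{cites} \cite{Ol90} for these two families rather than reproducing any combinatorics; your proposal attempts to sketch that argument. One small slip in the reduction: Proposition~\ref{prop:spor} handles only $n=6,7$, not $n=5$, so ``assume $n\ge8$'' should read ``assume $n\ne6,7$'' as in the paper.

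There is a genuine gap in your Clifford-theoretic descent from $\fS_n$ to $\fA_n$. When an $\fS_n$-block $B$ (with self-conjugate $p$-core) covers a single $\fA_n$-block $B'$, you assert $k(B')\le k(B)$; but if $B$ contains $s$ characters $\chi_\lambda$ with $\lambda=\lambda'$ and $2r$ characters in conjugate pairs, then $k(B)=s+2r$ while $k(B')=2s+r$, since each self-conjugate $\chi_\lambda$ splits into two constituents on restriction. So $k(B')>k(B)$ whenever $s>r$, and your inequality fails. Handling this case requires a separate combinatorial bound on the number of self-conjugate partitions of given $p$-core and weight (for $p=2$ one also loses a factor~$2$ in $|D|$), which is part of Olsson's analysis in \cite{Ol90} and is not a consequence of ``standard Clifford theory''. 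The analogous issue, with associate classes of bar partitions in place of conjugate partitions, arises in the spin-block case for $2.\fA_n$. Your final paragraph does acknowledge that the delicate step lies in Olsson's majorisation arguments, but the specific inequality you rely on to get there is not correct as stated.
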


\begin{proof}
By Proposition~\ref{prop:spor} we may assume that $n\ne6,7$. The case of
$G=\fA_n$, as well as of its double cover $\tilde\fA_n$ when $p\ne2$ is due to
Olsson \cite{Ol90}. The validity for the faithful 2-blocks of the double cover
now follows from Theorem~\ref{thm:nav}.
\end{proof}

\section{Groups of Lie type in defining characteristic} \label{sec:lie-p}

In this section we show that the $p$-blocks of quasi-simple groups of Lie type
in characteristic~$p$ satisfy the $k(B)$-conjecture.

\begin{prop}   \label{prop:A1}
 Let $G$ be a covering group of $\PSL_2(q)$, with $q=p^f\ge4$. Then the
 $k(B)$-conjecture (in the strong form) holds for all $p$-blocks of $G$.
\end{prop}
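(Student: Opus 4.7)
The plan is to reduce to $G = \SL_2(q)$ and then read off the block structure from the classical character table. By Theorem~\ref{thm:nav}, it suffices to check faithful $p$-blocks of $p'$-covering groups of $\PSL_2(q)$. For $q$ even (so $p = 2$) one has $\SL_2(q) = \PSL_2(q)$, and for $q$ odd the non-trivial $p'$-cover is $\SL_2(q)$ itself, since $|Z(\SL_2(q))| = 2$ is prime to $p$. The only exceptional Schur multipliers occur at $q \in \{4,9\}$, and those are absorbed by Proposition~\ref{prop:spor}; thus I may assume $G = \SL_2(q)$.

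Next I would observe that the Sylow $p$-subgroup $P$ of $\SL_2(q)$, realised as the upper unitriangular subgroup, is elementary abelian of order $q$. Hence every defect group is either trivial or conjugate to $P$, and in every case abelian, so the strong form of the $k(B)$-conjecture reduces to the inequality $k(B) \leq |D|$. The unique character of $p$-defect zero is the Steinberg character of degree $q$, whose block trivially satisfies $k(B) = 1 = |D|$.

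It remains to bound $k(B)$ on the blocks of full defect. Here I would use that $Z(G)$ is a $p'$-group, so such blocks are separated by their central character. For $p = 2$ the centre is trivial, all $q$ non-Steinberg characters lie in the principal block, and one gets $k(B_0) = q = |D|$. For $p$ odd, the classical counts $k(\SL_2(q)) = q + 4$ and $k(\PSL_2(q)) = (q+5)/2$ yield, after discarding the Steinberg, exactly $(q+3)/2$ characters with trivial central character and $(q+3)/2$ with nontrivial central character; both quantities are at most $q$ whenever $q \geq 3$, irrespective of how the faithful characters further subdivide into blocks.

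The calculation above is entirely elementary; the only step demanding genuine care is the enumeration of covering groups, which is precisely what the combination of Theorem~\ref{thm:nav} and Proposition~\ref{prop:spor} settles at the outset.
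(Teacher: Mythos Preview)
Your argument is correct and follows essentially the same route as the paper's proof. One small point: the dichotomy ``every defect group is trivial or all of $P$'' does not follow merely from $P$ being elementary abelian---it holds because every non-Steinberg irreducible character of $\SL_2(q)$ has degree prime to $p$ (equivalently, by Humphreys' theorem on defining-characteristic blocks)---and once you have that, your final hedge ``irrespective of how the faithful characters further subdivide into blocks'' becomes unnecessary, since there is exactly one full-defect block per central character.
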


\begin{proof}
By Proposition~\ref{prop:spor} we may assume that $G$ is not an exceptional
covering group of $S:=\PSL_2(q)$. Note that Sylow $p$-subgroups of $G$ are
(elementary) abelian. First assume that $p=2$. Then $G=S$, $|\Irr(G)|=q+1$,
and all but the Steinberg character of $G$ lie in the principal block $B$ of
$G$, with defect groups the Sylow $p$-subgroups of $S$ of order $q$. So we
have equality $k(B)=|D|$. 
\par
Now assume that $p>2$. Here we may assume that $G=\SL_2(q)$, since
$|Z(\SL_2(q))|=2$ is prime to~$p$. Now $G$ has three $p$-blocks, one of
defect zero, and the other two lying above the two central characters of
$G$. The latter have full defect and each contain $(q+3)/2$ irreducible
characters, while Sylow $p$-subgroups of $G$ have order~$q$.
\end{proof}

\begin{thm}   \label{thm:lie-p}
 Let $G$ be quasi-simple of Lie type in characteristic~$p$. Then the
 $k(B)$-conjecture in the strong form holds for all $p$-blocks of $G$.
\end{thm}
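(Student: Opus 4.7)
The plan is to combine Proposition~\ref{prop:A1} with the classical description, going back to Humphreys, of the $p$-blocks of a finite group of Lie type in its defining characteristic: every such block either has defect zero or has a full Sylow $p$-subgroup as defect group.

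First I would use Theorem~\ref{thm:nav} to reduce to faithful $p$-blocks of $p'$-covering groups of the simple quotient $S=G/Z(G)$. For simple groups of Lie type in characteristic~$p$ the Schur multiplier is a $p'$-group, apart from the finitely many exceptional covers dispatched in Proposition~\ref{prop:spor}, so we may assume $G=\bG^F$ where $\bG$ is a simply connected simple algebraic group of rank $r$ with Steinberg endomorphism $F$ over $\FF_q$, $q=p^f$. Proposition~\ref{prop:A1} handles the case $S\cong\PSL_2(q)$, so we may further assume $G$ is not of type $A_1$; then the Sylow $p$-subgroup $U$ of $G$ is non-abelian of order $q^N$, where $N\ge2$ is the number of positive roots.

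By Humphreys' theorem it remains to consider blocks $B$ with defect group $D=U$, since defect-zero blocks satisfy the strong form trivially ($k(B)=|D|=1$ with $D$ abelian). For such a $B$ I would use the crude bound $k(B)\le k(G)$. For each fixed Lie type $k(G)$ is a polynomial in $q$ whose degree equals the rank $r$, whereas $|U|=q^N$ with $N>r$ for every simple type distinct from $A_1$. Consequently $k(G)<q^N$ holds for all but finitely many values of~$q$, and since $U$ is non-abelian this yields the required strict inequality $k(B)<|D|$.

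The main obstacle will be the finite remaining list of small $q$ cases in which the polynomial inequality is not immediately conclusive. For these I would verify $k(B)<|U|$ directly, invoking the known character tables of the small groups of Lie type (available in the GAP character table library, and used in the same spirit as in the proof of Proposition~\ref{prop:spor}).
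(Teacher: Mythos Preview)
Your proposal is correct and follows essentially the same approach as the paper: reduce to the simply connected group $\bG^F$, invoke Humphreys' theorem so that only full-defect blocks matter, and then bound $k(B)\le k(G)$ against $|U|=q^N$ using that $k(G)$ grows like $q^r$ with $r<N$. The only substantive difference is that the paper quotes the explicit Fulman--Guralnick bound $k(G)\le 27.2\,q^r$, which immediately reduces the residual checking to the types $A_2$, $\tw2A_2$, $A_3$, $\tw2A_3$, $B_2$, $G_2$, $\tw2B_2$, $\tw2G_2$ at a few small $q$, whereas your appeal to ``$k(G)$ is a polynomial in $q$ of degree $r$'' leaves the finite list implicit; either way the leftover cases are dispatched by direct inspection.
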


\begin{proof}
Again by Proposition~\ref{prop:spor} we may assume that $G$ is not an
exceptional covering group of $S:=G/Z(G)$. Thus, $|Z(G)|$ is prime to~$p$
and hence we may assume that $G$ is the universal $p'$-covering group of $S$,
whence it can be obtained as the group of fixed points under a Steinberg map
$F$ of a simple simply connected linear algebraic group $\bG$ over an
algebraic closure of $\FF_p$. Let $q$ be the positive real number such that
some power $F^d$ of $F$ acts as $q^d$ on the character group of an $F$-stable
maximal torus of $\bG$. The order of a Sylow $p$-subgroup of $G$ is then
given by $q^N$, where $N$ is the number of positive roots in the root system
of $\bG$ (see e.g.~\cite[Prop.~24.3]{MT}).
By a theorem of Humphreys \cite{Hum} any $p$-block of $G$ either has full
defect, or defect zero. We now employ results on upper bounds for the number
of conjugacy classes $k(G)$. By
Fulman and Guralnick \cite[Thm.~1.1]{FG12} we have $k(G)\le 27.2\,q^r$,
where $r$ is the rank of $\bG$. Comparing with the size $q^N$ of a Sylow
$p$-subgroup we see that our claim holds whenever $q^{N-r}>27.2$, so
certainly if $N-r\ge5$. Since $G$ is not of type $A_1$ by
Proposition~\ref{prop:A1}, this only leaves the groups of types $A_2$, $A_3$,
$\tw2A_2$, $\tw2A_3$, $B_2$, $G_2$, $\tw2B_2$ and $^2G_2$. For the latter three
series of exceptional groups, \cite[Table~1]{FG12} gives the precise value of
$k(G)$, from which our desired inequality readily follows. For $G=\SL_3(q)$
or $\SU_3(q)$ we have $k(G)\le q^2+q+8$ (see \cite[p.~3032]{FG12}), while
Sylow $p$-subgroups have order $q^3$, so only the case $q=2$ remains. But
$k(\SL_3(2))=6$, while $\SU_3(2)$ is solvable. For types $A_3$ and $\tw2A_3$
we have $N-r=3$, thus we are done when $q^3>27.2$, that is, for $q>3$. But
$k(\SL_4(2))=14<2^6$, $k(\SU_4(2))=20<2^6$, $k(\SL_4(3))=51<3^6$,
$k(\SU_4(3))=71<3^6$.
\par
Finally, for $G$ of type $B_2$ we have $N-r=2$, so we only need to check
$q\le5$, which is readily done with GAP.
\end{proof}

\section{Groups of Lie type in non-defining characteristic} \label{sec:lie-nondef}
In this section we start our investigation of groups of Lie type for
non-defining primes.
We consider the following setup. Let $\bG$ be a simple algebraic group of
simply connected type over an algebraic closure of $\FF_p$, with a Steinberg
map $F:\bG\rightarrow\bG$. We write $G:=\bG^F$ for the group of fixed
points, a finite group of Lie type, which is quasi-simple except in finitely
many cases. Throughout, $\ell$ is a prime dividing $|G|$ but different
from~$p$.


\begin{lem}   \label{lem:suzree}
 The $k(B)$-conjecture holds in its strong form for all blocks of the Suzuki
 and Ree groups.
\end{lem}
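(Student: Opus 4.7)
\emph{Proof plan.}
By Theorem~\ref{thm:lie-p} the conjecture holds in strong form for all $p$-blocks of these groups (with $p=2$ for the Suzuki and $\tw2F_4$ series, and $p=3$ for the $\tw2G_2$ series), and by Proposition~\ref{prop:spor} the exceptional covers of $\tw2B_2(8)$ and the Tits group $\tw2F_4(2)'$ have already been settled. Since the remaining simple groups in these three series have trivial Schur multiplier and trivial centre, it suffices to verify the strong $k(B)$-conjecture for blocks $B$ of $G\in\{\tw2B_2(q^2),\tw2G_2(q^2),\tw2F_4(q^2)\}$ at primes $\ell$ different from the defining characteristic.

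The core of the argument is a Sylow analysis based on the list of maximal tori. For $\tw2B_2(q^2)$ the $2'$-part of $|G|$ is $(q^2-1)(q^2-q\sqrt{2}+1)(q^2+q\sqrt{2}+1)$, a product of three pairwise coprime orders of cyclic maximal tori; hence for every odd prime $\ell$ the Sylow $\ell$-subgroups of $G$ are cyclic. Brauer--Dade theory for blocks with cyclic defect then gives the explicit formula $k(B)=e+(|D|-1)/e$ with $e\mid(\ell-1)$, which yields $k(B)\le|D|$, and cyclicity forces abelianness so the strong form is automatic. The analogous factorization $(q^2-1)(q^2+1)(q^2-q\sqrt{3}+1)(q^2+q\sqrt{3}+1)$ for $\tw2G_2(q^2)$ disposes in the same way of every odd $\ell\ne 3$, and for $\tw2F_4(q^2)$ the cyclic-defect argument covers every prime $\ell$ that divides only one of the cyclotomic factors $\Phi_4(q^2)$, $\Phi_8(q^2)$, $\Phi_{12}(q^2)$ or the $\tw2F_4$-specific factors of $\Phi_{24}(q^2)$.

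It remains to treat a short list of small primes with non-cyclic Sylow: the prime $\ell=2$ in $\tw2G_2(q^2)$, whose Sylow $2$-subgroup is classically elementary abelian of order $8$, and the primes dividing $q^4-1$ in $\tw2F_4(q^2)$, whose Sylow subgroups sit inside a rank-$2$ maximal torus and are therefore abelian of rank at most~$2$. In each of these residual cases Sambale's Theorem~\ref{thm:sam} applies directly to the (abelian) defect group. The principal technical obstacle I foresee is the precise identification of the defect groups of non-principal blocks at $\ell\in\{3,5\}$ in $\tw2F_4(q^2)$; should the torus-based argument not be clean enough, a direct verification using the known character table of $\tw2F_4(q^2)$, available via \Chevie, provides a safe finite fallback.
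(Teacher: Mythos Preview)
Your overall strategy coincides with the paper's: handle the defining prime via Theorem~\ref{thm:lie-p}, and for the remaining primes reduce to Sambale's Theorem~\ref{thm:sam} by showing that Sylow $\ell$-subgroups are abelian of small rank. There is, however, a genuine gap at $\ell=3$ for $\tw2F_4(q^2)$.

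You assert that every odd prime $\ell$ dividing $q^4-1$ has its Sylow $\ell$-subgroup contained in a rank-$2$ maximal torus, hence abelian of rank at most~$2$. This fails for $\ell=3$. Since $q^2=2^{2f+1}\equiv 2\pmod 3$, we have both $3\mid q^2+1$ \emph{and} $3\mid q^4-q^2+1=\Phi_6(q^2)$, so the $3$-part of $|G|$ equals $3\cdot(q^2+1)_3^{\,2}$, strictly larger than the $3$-part of the $\Phi_2$-torus $(q^2+1)^2$. In fact a Sylow $3$-subgroup of $\tw2F_4(q^2)$ is obtained by extending the $3$-part of that torus by a cyclic group $C_3$ coming from the relative Weyl group, and is therefore non-abelian. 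Consequently Theorem~\ref{thm:sam} does not apply, and this case is not covered by your torus argument. (As a side remark, $\Phi_8(q^2)$ does not occur in $|\tw2F_4(q^2)|$; the cyclotomic factors in $q^2$ are $\Phi_1^2\Phi_2^2\Phi_4^2\Phi_6\Phi_{12}$.)

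The paper closes precisely this one remaining case by citing \cite[Kor.]{M2F4}, where the $3$-block distribution for $\tw2F_4(q^2)$ was worked out from the generic character table and the strong inequality $k(B)<|D|$ checked. Your proposed fallback --- a direct verification via \Chevie\ --- would amount to reproducing that computation. It is feasible because the character table is generic in~$q$, but it is not a ``finite'' check in the naive sense, and in any event you would have to carry it out rather than merely point to it; your hedge about ``non-principal blocks at $\ell\in\{3,5\}$'' understates the issue, since already the principal $3$-block has non-abelian defect.
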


\begin{proof}
For these groups, Sylow $\ell$-subgroups for $\ell\ne p$ are abelian of rank
at most~3, hence the conjecture holds by Theorems~\ref{thm:lie-p}
and~\ref{thm:sam}, unless $\ell=3$ and $G=\tw2F_4(2^{2f+1})$. The assertion
in the latter case was shown in \cite[Kor.]{M2F4}.
\end{proof}

From now on we may hence assume that $G$ is not a Suzuki or Ree group and so
that $F$ is a Frobenius map with respect to some $\FF_q$-structure, with
$q$ a power of $p$. For a prime $\ell$ different from $p$ we denote by
$d_\ell(q)$ the order of $q$ modulo~$\ell$, if $\ell$ is odd, respectively
the order of $q$ modulo~4 if $\ell=2$.

We recall some results on the block distribution of characters of finite
reductive groups. This is closely related to the partition of $\Irr(G)$ into
Lusztig series $\cE(G,s)$, where $s$ runs over semisimple elements of the dual
group $G^*:=\bG^{*F}$ modulo conjugation. Let $B$ be an $\ell$-block of $G$.
Then there exists a semisimple $\ell'$-element $s\in G^*$ such that $B$ is
contained in
$$\cE_\ell(G,s):=\coprod_{t} \cE(G,st),\eqno{(*)}$$
where $t$ runs over a system of representatives of conjugacy classes of
semisimple $\ell$-elements in $C_G(s)$ (see \cite[Thm.~9.12]{CE}). Recall that
a semisimple element $s\in\bG^{*F}$ is called \emph{isolated} if the
connected component $C_{\bG^*}^\circ(s)$ of the centraliser $C_{\bG^*}(s)$
is not contained in any proper $F$-stable Levi subgroup $\bL^*< \bG^*$.

We now argue that most $\ell$-blocks of $G$ are Morita equivalent to
blocks of smaller groups and so cannot give rise to a minimal counterexample.

\begin{lem}   \label{lem:not qi}
 Let $s\in G^*$ be a non-isolated semisimple $\ell'$-element. Then no
 $\ell$-block of $G$ in $\cE_\ell(G,s)$ is a minimal counterexample to the
 strong form of the $k(B)$-conjecture.
\end{lem}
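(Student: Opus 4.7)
The plan is to apply the Bonnaf\'e--Rouquier Morita equivalence to trade each $\ell$-block $B\subseteq\cE_\ell(G,s)$ for a Morita equivalent block of a proper $F$-stable Levi subgroup, and then to decompose that Levi into quasi-simple groups each strictly smaller than $G$ in the ordering by $|G/Z(G)|$, so that any counterexample $B$ would give rise to a smaller one.

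Since $s$ is non-isolated, $C_{\bG^*}^\circ(s)$ is contained in some proper $F$-stable Levi $\bL^*<\bG^*$; let $\bL\le\bG$ be an $F$-stable Levi in duality with $\bL^*$ and set $L:=\bL^F$. The theorem of Bonnaf\'e--Rouquier, in the form of Bonnaf\'e--Dat--Rouquier which also handles $\bG$ with disconnected centre, provides a Morita equivalence between the sum of $\ell$-blocks $\cE_\ell(G,s)$ and $\cE_\ell(L,s)$, under which $B$ corresponds to an $\ell$-block $b$ of $L$ satisfying $k(B)=k(b)$ and having a defect group isomorphic to that of $B$.

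Next I would decompose $L$ into quasi-simple constituents. One has $\bL=Z(\bL)^\circ\cdot[\bL,\bL]$, with $[\bL,\bL]$ a central product of simple simply connected algebraic groups permuted by~$F$; grouping into $F$-orbits and taking fixed points expresses $[\bL,\bL]^F$ as a direct product of finite quasi-simple groups $G_1,\dots,G_m$ of Lie type. Since $\bL$ is a proper Levi of the simple simply connected group $\bG$, each factor has strictly smaller rank, and a direct comparison of orders yields $|G_i/Z(G_i)|<|G/Z(G)|$ for every~$i$. The torus factor $Z(\bL)^{\circ F}$ contributes only a linear character to any block, so $b$ corresponds to an external product of blocks $b_i$ of $G_i$ with $k(b)=\prod_ik(b_i)$ and defect group $D\cong\prod_iD_i$. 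By the minimality hypothesis each $b_i$ satisfies $k(b_i)\le|D_i|$, with strict inequality when $D_i$ is non-abelian; multiplying gives $k(B)\le|D|$, with strict inequality whenever $D$ is non-abelian. Thus $B$ is not a minimal counterexample.

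The main obstacle is arranging Bonnaf\'e--Rouquier in the generality required here, namely for $\bG$ of possibly disconnected centre and for every prime $\ell\ne p$, and verifying that the resulting Morita equivalence does carry defect groups to defect groups. A subsidiary issue is cleanly pushing the block decomposition through the central isogeny from $Z(\bL)^{\circ F}\times\prod_iG_i$ onto $L$; this is painless whenever $\ell$ does not divide the degree of that isogeny, but requires a small separate argument in the few cases where it does, to match $k(b)$ and the defect group of $b$ with those of the external product of the $b_i$.
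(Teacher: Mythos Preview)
There is a genuine gap in your first step. When $C_{\bG^*}(s)$ is disconnected---which can certainly happen here, since $\bG$ is simply connected and hence $\bG^*$ is of adjoint type---the Bonnaf\'e--Dat--Rouquier theorem \cite[Thm.~7.7]{BDR15} does \emph{not} produce a Morita equivalence with a block of $L=\bL^F$, but with a block of $\bN^F$, where $\bL\le\bN\le N_{\bG}(\bL)$ is dual to $C_{\bG^*}(s)^F\bL^*$. In general $\bN^F$ is a proper finite extension of $L$, and your factorisation of $L$ as a central product of simply connected quasi-simple pieces does not carry over to $\bN^F$; so your second step, as written, does not apply to the group that actually arises.

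The paper avoids this difficulty altogether. Its notion of ``minimal counterexample'' is taken over \emph{all} finite groups $G_1$ with $|G_1/Z(G_1)|<|G/Z(G)|$ and defect groups isomorphic to those of $B$, not just over quasi-simple ones. Since $\bN^F$ is a proper subgroup of $G$ containing $Z(G)$, one has $|\bN^F/Z(\bN^F)|<|G/Z(G)|$, and BDR also guarantees that the defect groups of $B$ and $b$ are isomorphic; so minimality applied directly to the block $b$ of $\bN^F$ already gives $k(B)=k(b)\le|D|$, with strict inequality when $D$ is non-abelian. Your product decomposition of $L$, and the central-isogeny bookkeeping you flag at the end, are therefore unnecessary detours.
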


\begin{proof}
By assumption there is some proper $F$-stable Levi subgroup
$\bL^*<\bG^*$ such that $C_{\bG^*}^\circ(s)\le\bL^*$. Then by a deep result
of Bonnaf\'e, Dat and Rouquier \cite[Thm.~7.7]{BDR15}, any $\ell$-block
$B$ in $\cE_\ell(G,s)$ is Morita equivalent to a suitable $\ell$-block $b$ of
$\bN^F$, where $\bN\ge \bL$ is dual to $C_{\bG^*}(s)^F\bL$. Moreover this
is induced by Jordan decomposition, hence compatible with central characters.
Then $k(B)=k(b)$ and the defect groups of $B$ and $b$ have the same order
(and are in fact isomorphic), whence the $k(B)$-conjecture holds for $B$ by
the minimality assumption. 
\end{proof}

We are thus left with the situation that $B$ lies in a series $\cE_\ell(G,s)$
with $s\in G^*$ an isolated semisimple element.
In this case we can utilise a result of Enguehard:

\begin{prop}   \label{prop:Enguehard}
 Let $B$ be an isolated, non-unipotent $\ell$-block of a quasi-simple group
 of Lie type $H$ for a prime $\ell\ge3$ that is good for $H$. Then $B$ is not
 a minimal counterexample to the strong form of the $k(B)$ conjecture.
\end{prop}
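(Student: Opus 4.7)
The plan is to invoke a Jordan-decomposition-type Morita equivalence for isolated blocks at good primes, due to Enguehard, and reduce $B$ to a unipotent block of a strictly smaller group of Lie type, so that the minimality hypothesis can be applied.

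First, by the general description of blocks via Lusztig series recalled in $(*)$, the block $B$ lies in some $\cE_\ell(H,s)$ for a semisimple $\ell'$-element $s\in H^*$. Since $B$ is non-unipotent we may assume $s\ne 1$, and since $B$ is isolated the element $s$ is isolated, i.e.\ $C^\circ_{\bG^*}(s)$ is not contained in any proper $F$-stable Levi subgroup of $\bG^*$. Enguehard's theorem on isolated $\ell$-blocks at good primes $\ell\ge3$ (his ``Jordan decomposition of blocks'' result) then provides a Morita equivalence between $B$ and a \emph{unipotent} $\ell$-block $b$ of a finite reductive group $L$ built from the connected centraliser $C^\circ_{\bG^*}(s)$ (suitably interpreted in the simply connected cover of that centraliser so as to handle central isogeny issues). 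This Morita equivalence preserves the number of ordinary characters, $k(B)=k(b)$, and identifies the defect groups of $B$ and $b$ as abstract finite $\ell$-groups.

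Next I would verify that $L$ is genuinely smaller in the sense required by the minimality assumption. Because $s\ne 1$ is isolated, $C^\circ_{\bG^*}(s)$ has the same rank as $\bG^*$ but a strictly smaller root system, hence $|L|<|\bG^{*F}|=|H|$; a straightforward comparison of centres then yields $|L/Z(L)|<|H/Z(H)|$. Combining this with the isomorphism of defect groups, the minimality hypothesis on $(H,B)$ applies to $(L,b)$, so $b$ satisfies the strong form of the $k(B)$-conjecture. Transporting the conclusion across the Morita equivalence, we obtain $k(B)=k(b)\le |D|$ with strict inequality whenever the defect groups are non-abelian, contradicting the assumption that $B$ is a minimal counterexample.

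The main obstacle lies in quoting Enguehard's theorem in precisely the form needed. The statement is subtle because of the interplay between $\bG$ and its dual $\bG^*$, the need to pass to the connected component $C^\circ_{\bG^*}(s)$ (and possibly to a simply connected cover) and to control central characters, and the restriction to good $\ell\ge 3$ which is exactly what ensures that the combinatorics of unipotent $\ell$-blocks of $L$ behaves as expected. Once these technicalities are correctly matched, the reduction is purely formal and the proof is complete.
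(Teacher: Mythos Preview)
Your overall strategy---invoke Enguehard's Jordan-decomposition result to pass to a block of a strictly smaller group with the same invariants, then apply minimality---is the same as the paper's. However, two points in your sketch need correction.

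First, Enguehard's theorem \cite[Thm.~1.6]{En08} does \emph{not} establish a Morita equivalence. What it provides is a group $G_1$ with $|G_1/Z(G_1)|<|G/Z(G)|$ and an $\ell$-block $B_1$ having the same number of irreducible characters and isomorphic defect groups as $B$, together with a bijection $\Irr(B)\to\Irr(B_1)$ that preserves central characters. This weaker statement is all that is needed here, but you should not claim more than the reference actually gives; Morita equivalences for isolated blocks are not known in this generality.

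Second, you gloss over the passage between the quasi-simple group $H$ and the simply connected group $G=\bG^F$ to which Enguehard's theorem applies. The paper first disposes of exceptional covering groups via Proposition~\ref{prop:spor}, so that $H=G/Z$ with $Z\le Z(G)$. It then observes that $\bG$ cannot be of type~$A$ (the only isolated element there is the identity, giving unipotent blocks), and for the remaining types $\ell$ good forces $\ell\nmid|Z(G)|$, so $B$ may be regarded as a block of $G$. Finally, the fact that Enguehard's bijection preserves central characters is precisely what allows one to descend the conclusion from $G$ back to any quotient $H=G/Z$. Your sketch acknowledges ``central isogeny issues'' but does not carry out these reductions, and without them the argument is incomplete.
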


\begin{proof}
By Proposition~\ref{prop:spor} we may assume that $H$ is not an exceptional
covering group. Thus, $H=G/Z$, where $G=\bG^F$ is as above and $Z\le Z(G)$. By
Lemma~\ref{lem:not qi} we may assume that $\bG$ is not of type $A$ as the
only isolated element in type $A$ is the identity, which corresponds to the
unipotent blocks. But then $\ell$ good implies that $\ell$ does not divide
$|Z(G)|$, so we may consider $B$ as an $\ell$-block of $G$. By the main result
of Enguehard \cite[Thm.~1.6]{En08} there is a group $G_1$, with $|G_1/Z(G_1)|$
strictly smaller than $|G/Z(G)|$ (since $B$ is not unipotent) with an
$\ell$-block $B_1$ having the same invariants (number of irreducible
characters and defect group) as $B$. Moreover, there is a bijection
$\Irr(B)\rightarrow\Irr(B_1)$ preserving central characters. In particular,
$B$ and any block of $H$ dominated by $B$ is not a minimal counterexample.
\end{proof}

We note one further reduction which will be used for isolated 5-blocks of
$E_8(q)$:

\begin{lem}   \label{lem:one block}
 Let $s\in G^*$ be a non-central semisimple $\ell'$-element with connected
 centraliser $\bC^*=C_{\bG^*}(s)$ such that $\cE_\ell(G,s)$ is a single
 $\ell$-block $B$, and $\cE_\ell(C,1)$ also is a single $\ell$-block $b$,
 where $\bC$ is dual to $\bC^*$. Then $B$ is
 not a minimal counterexample to the strong form of the $k(B)$ conjecture.
\end{lem}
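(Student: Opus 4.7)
The plan is to transport $B$ to a unipotent block of the smaller group $C=\bC^F$ via Jordan decomposition and then invoke the minimality hypothesis. First I would establish $k(B)=k(b)$: by Lusztig's Jordan decomposition of characters, valid because $\bC^*=C_{\bG^*}(s)$ is connected, for each semisimple $\ell$-element $t\in\bC^{*F}=C_{G^*}(s)$ there is a bijection $\cE(G,st)\leftrightarrow\cE(C,t)$ (each side being in bijection with $\cE(C_{\bC^*}(t)^F,1)$, using that $C_{\bG^*}(st)=C_{\bC^*}(t)$ since $s$ is central in $\bC^*$). Summing over conjugacy classes of such $t$ yields $|\cE_\ell(G,s)|=|\cE_\ell(C,1)|$, and the single-block hypotheses then force $k(B)=k(b)$.

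Second, I would show that $D(B)\cong D(b)$. This is the content of the Jordan decomposition of $\ell$-blocks, in the form developed by Brou\'e--Michel and refined by Cabanes--Enguehard: when $\bC^*$ is connected, the character bijection above is compatible with block distribution and sends blocks to blocks with isomorphic defect groups. The single-block hypotheses identify $B$ with $b$ under this correspondence, so their defect groups coincide.

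Finally, since $s$ is non-central, $\bC^*<\bG^*$ is a proper closed reductive subgroup of strictly smaller dimension; dually $\bC<\bG$. A direct comparison of polynomial orders in $q$, together with the bounded size of centres of finite groups of Lie type, then yields $|C/Z(C)|<|G/Z(G)|$. The minimality assumption applied to $b$ gives $k(b)\le|D(b)|$ (strict if $D(b)$ is non-abelian), which transports via $k(B)=k(b)$ and $D(B)\cong D(b)$ to the same conclusion for~$B$, contradicting that $B$ is a counterexample. The main obstacle will be the defect-group half of the Jordan decomposition of blocks when $s$ is isolated, since the Bonnaf\'e--Dat--Rouquier Morita equivalences underlying Lemma~\ref{lem:not qi} are not available in that case; the hypothesis that each $\cE_\ell$-series consists of a single block is precisely what is needed to circumvent this difficulty.
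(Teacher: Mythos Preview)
Your argument for $k(B)=k(b)$ via Jordan decomposition, summing the bijections $\cE(G,st)\leftrightarrow\cE(C,t)$ over $\ell$-elements $t\in C^*$, is exactly what the paper does. The divergence is in the defect-group step, and there your proposal has a genuine gap.

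You invoke a block-theoretic Jordan decomposition ``in the form developed by Brou\'e--Michel and refined by Cabanes--Enguehard'' to conclude $D(B)\cong D(b)$. But no such statement is available in the generality you need: the relevant case here is precisely when $s$ is \emph{isolated} (the non-isolated case being already covered by Lemma~\ref{lem:not qi}), and in that situation there is no Morita equivalence or defect-group-preserving block correspondence to cite. You flag this yourself as ``the main obstacle'', but your closing sentence---that the single-block hypothesis is ``precisely what is needed to circumvent this difficulty''---is a hope, not an argument. Nothing in your outline actually produces the isomorphism.

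The paper bypasses the obstacle by proving less: it does not claim $D(B)\cong D(b)$, only $|D(B)|=|D(b)|$. The observation is that under the Jordan bijections, corresponding characters $\chi\in\cE(G,st)$ and $\chi'\in\cE(C,t)$ (both mapping to the same unipotent character of $C_{G^*}(st)=C_{C^*}(t)$) satisfy
\[
\chi(1)/\chi'(1)=|G^*:C_{G^*}(s)|_{p'},
\]
a fixed number independent of $t$ and of the pair $(\chi,\chi')$. Comparing the $\ell$-parts of the degrees of height-zero characters on each side then forces $|D(B)|=|D(b)|$. Combined with $k(B)=k(b)$, this yields that the inequality $k(B)\le|D(B)|$ holds for $B$ if and only if it holds for $b$; that is the paper's conclusion. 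No block-level equivalence is needed, and the single-block hypotheses are used only to ensure that the character count over $\cE_\ell(G,s)$ (resp.\ $\cE_\ell(C,1)$) really is $k(B)$ (resp.\ $k(b)$). Note that the paper does not obtain an isomorphism of defect groups, so strictly speaking it establishes the transfer of the inequality rather than the full ``minimal counterexample'' clause as defined in the introduction; this is adequate for the applications in Section~\ref{sec:exc}.
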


\begin{proof}
We have that $\Irr(B)=\coprod_t\cE(G,st)$ and $\Irr(b)=\coprod_t\cE(C,t)$
where both disjoint unions run over $C^*:=C_{G^*}(s)$-conjugacy classes of
$\ell$-elements $t$ in $C^*$. Application of Jordan decomposition in $G$ as
well as in $C$ puts both $\cE(G,st)$ and $\cE(C,t)$ into bijection with the
same Lusztig series $\cE(C_t,1)$ of $C_t:=C_{G^*}(st)=C_{C^*}(t)$ for any
such $t$. Thus, if $\chi\in\cE(G,s)$ and $\chi'\in\cE(C,t)$ correspond to
the same character in $\cE(C_t,1)$ we have
$\chi(1)/\chi'(1)=|G^*:C_{G^*}(s)|_{p'}$. So comparing character degrees of
height zero characters in the two Lusztig series we see that $B$ and $b$ have
defect groups of the same order. Thus the inequality holds for $B$ if (and
only if) it holds for $b$.
\end{proof}

For unipotent blocks, that is, $\ell$-blocks in $\cE_\ell(G,1)$, we have the
following partial reduction:

\begin{lem}   \label{lem:ab def}
 Let $\ell$ be a good prime for $\bG$ which does not divide $|Z(G)|$, and
 let $B$ be a unipotent $\ell$-block of $G/Z$ with abelian defect groups, where
 $Z\le Z(G)$. Then $B$ is not a minimal counterexample to the strong form of
 the $k(B)$-conjecture.
\end{lem}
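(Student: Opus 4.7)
My plan is to combine the Alperin--McKay conjecture (known for unipotent blocks at good primes in the relevant generality) with the solution of the $k(GV)$-problem.

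First, since $\ell$ is coprime to $|Z(G)|$, and hence to $|Z|$, the unipotent block $B$ of $G/Z$ is dominated by a unique unipotent block $\tilde B$ of $G$ with $k(\tilde B)=k(B)$ and an isomorphic defect group, by the standard central-character argument used in the proof of Theorem~\ref{thm:nav}. So I may replace $B$ by $\tilde B$ and henceforth assume $B$ is a unipotent block of $G$ itself.

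Next, since $\ell$ is good, $\ell\nmid|Z(G)|$, and $D$ is abelian, Brauer's height-zero conjecture is known to hold for $B$, so every character in $\Irr(B)$ has height zero. The Alperin--McKay conjecture has been established for unipotent $\ell$-blocks of $G$ in this setting (via Sp\"ath's reduction theorem together with case-by-case checks for quasi-simple groups of Lie type), giving
$$ k(B)=k(b), $$
where $b$ is the Brauer correspondent of $B$ in $N_G(D)$.

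Finally, $b$ has normal abelian defect group $D$. By Fong--Reynolds and Dade's theorem on blocks with normal defect, $b$ is Morita equivalent to the principal block of a semidirect product $D\rtimes E$, where $E$ is the inertial quotient of $b$, an $\ell'$-group acting faithfully on $D$. The $k(GV)$-theorem of \cite{kGV} then gives $k(D\rtimes E)\leq|D|$, whence $k(b)\leq|D|$ and thus $k(B)\leq|D|$. Since $D$ is abelian this already matches the strong form of the conjecture, so $B$ certainly cannot be a minimal counterexample.

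The main obstacle will be arranging the applicability of Alperin--McKay at the required level of generality, since its proof for unipotent blocks at good primes not dividing $|Z(G)|$ is assembled from several sources covering the various Lie types. An alternative route would bypass Alperin--McKay by enumerating $\Irr(B)$ directly via Lusztig's Jordan decomposition and $d$-Harish--Chandra theory (in the spirit of Sections~\ref{sec:class} and~\ref{sec:exc}); there the main technical burden shifts to tracking the contributions from the non-unipotent Lusztig series sitting in $B$ and verifying that their total still fits under $|D|$.
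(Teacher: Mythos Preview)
Your argument is essentially correct but takes a longer route than the paper, and the one step you flag as an obstacle is precisely the step the paper handles with a single clean reference. The paper's proof is three lines: after the same reduction from $G/Z$ to $G$, it invokes Brou\'e--Michel \cite[Thm.~3.1(2)]{BM93}, which shows that under exactly these hypotheses (good $\ell$, $\ell\nmid|Z(G)|$, unipotent block with abelian defect) the block $B$ is isotypic with its Brauer correspondent $b$ in $N_G(D)$. Isotypy gives $k(B)=k(b)$ and matching defect groups in one stroke, so there is no need to assemble height zero and Alperin--McKay separately; the Brou\'e--Michel result is the common source for both in this setting. The paper then simply appeals to minimality: $N_G(D)$ is a proper subgroup (else $D\le Z(G)$, contradicting $\ell\nmid|Z(G)|$), so $|N_G(D)/Z(N_G(D))|<|G/Z(G)|$ and the conjecture holds for $b$ by hypothesis.

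Your final step via K\"ulshammer and the $k(GV)$-theorem is therefore unnecessary for the lemma as stated, though it does buy something: it shows $k(B)\le|D|$ outright rather than merely ruling out a minimal counterexample. One small inaccuracy there: K\"ulshammer's theorem gives a Morita equivalence of $b$ with a \emph{twisted} group algebra $k_\alpha[D\rtimes E]$, not the principal block of $D\rtimes E$; to apply the $k(GV)$-result you should pass to a central $\ell'$-extension $\hat E$ of $E$ trivialising $\alpha$, so that $b$ is Morita equivalent to a single block of the $\ell$-solvable group $D\rtimes\hat E$, and then invoke the $k(B)$-conjecture for $\ell$-solvable groups. With that fix your argument goes through, but the paper's use of minimality is both shorter and avoids this detour entirely.
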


\begin{proof}
By our assumption on $\ell$ we may assume that $B$ is a unipotent block of $G$.
Then under the stated conditions, by \cite[Thm.~3.1(2)]{BM93} the block $B$
is isotypic with a block of the normaliser of a defect group, and hence
can't be a counterexample by minimality.
\end{proof}

To deal with unipotent blocks with non-abelian defect groups we will need some
information on conjugacy classes of $\ell$-elements and their Lusztig series.
The following was essentially shown by Cabanes and Enguehard:

\begin{prop}   \label{prop:isogeny}
 Let $\sigma:\bG\rightarrow\bH$ be an isogeny of connected reductive groups
 in characteristic~$p$ equivariant with respect to Frobenius endomorphisms
 $F,F'$ of $\bG,\bH$ respectively, with dual isogeny
 $\sigma^*:\bH^*\rightarrow\bG^*$. Set $r:=|Z(\bG)/Z^\circ(\bG)|$. Let $\ell$
 be a prime different from $p$ not dividing $r$ and $s\in H^*$ of order prime
 to $r\ell$. Then $\sigma^*$ induces a bijection between the conjugacy classes
 of $\ell$-elements in $C_{H^*}(s)$ and $C_{G^*}(\sigma^*(s))$, and for any
 such $\ell$-element $t$ we have a bijection
 $\cE(H,st)\rightarrow\cE(G,\sigma^*(st))$.
\end{prop}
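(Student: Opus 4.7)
The plan is to deduce the proposition from the general theory of isogenies developed by Cabanes and Enguehard in \cite{CE}. The argument splits naturally into two assertions: a bijection on $\ell$-elements of centralisers, and a bijection on Lusztig series.

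For the first, I would begin by recording that $\sigma^* : \bH^* \to \bG^*$ is again an isogeny whose kernel (up to its $p$-part) has order dividing $r = |Z(\bG)/Z^\circ(\bG)|$, a standard consequence of the duality of isogenies of root data. Hence $\sigma^*$ maps the semisimple element $s$ of order prime to $r\ell$ to an element of $G^*$ of the same order, and $C_{\bG^*}(\sigma^*(s))$ differs from $\sigma^*(C_{\bH^*}(s))$ only by a central subgroup of order dividing a power of $r$ (times a $p$-group). Taking $F$-fixed points and restricting to $\ell$-elements, which is harmless since $\ell \nmid rp$, then yields the desired bijection on conjugacy classes of $\ell$-elements $t$.

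For the second bijection, I would invoke the compatibility of Jordan decomposition with isogenies: under an isogeny $\sigma$, rational Lusztig series $\cE(G, \sigma^*(x))$ pull back to disjoint unions of Lusztig series of $H$, and the combinatorics is controlled by the fibres of $\sigma^*$ on semisimple conjugacy classes. When the order of the semisimple element $st$ is coprime to $r$, these fibres are singletons and one obtains the claimed bijection $\cE(H, st) \to \cE(G, \sigma^*(st))$: both sides are in bijection, via Jordan decomposition in the smaller groups, with the unipotent characters of the centraliser $C_{\bG^*}(\sigma^*(st))^F \cong C_{\bH^*}(st)^{F'}$ identified through the first part.

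The main obstacle is the careful bookkeeping of the $r$-torsion in the kernels and cokernels of $\sigma^*$ and its restrictions to centralisers, and verifying that one obtains genuine bijections rather than multi-to-one correspondences. The hypothesis that $s$ has order prime to $r\ell$ is precisely what prevents $\sigma^*$ from collapsing $\ell$-elements in the centraliser or splitting a Lusztig series; much of the required machinery is already present in \cite{CE}, and the proof is essentially a matter of assembling and specialising these results.
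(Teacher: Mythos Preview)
Your outline is plausible in spirit, but the paper's argument is considerably shorter and avoids the bookkeeping you anticipate. Rather than analysing the general isogeny $\sigma$ directly, the paper observes that $\bG$ and $\bH$, being isogenous, share a common adjoint quotient $\bG_\ad=\bH_\ad$. It then applies \cite[Prop.~17.4]{CE} to the adjoint quotient map $\bG\to\bG_\ad$ (which gives the claimed bijections for that specific isogeny), applies the same result to $\bH\to\bH_\ad=\bG_\ad$, and composes. This two-line reduction replaces all of your kernel/cokernel analysis by a single citation.

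Your direct approach could likely be pushed through, but one step deserves more care: the assertion that the kernel of $\sigma^*$, up to its $p$-part, has order dividing $r=|Z(\bG)/Z^\circ(\bG)|$ is not immediate for arbitrary isogenies of connected \emph{reductive} groups, since finite central subgroups can meet the connected centre $Z^\circ(\bG)$ non-trivially. In the semisimple case this is fine, and in practice that is what is needed, but as stated the proposition covers the reductive case and your claimed bound on $\ker(\sigma^*)$ would require further justification. The factorisation through the adjoint group neatly sidesteps this, because \cite[Prop.~17.4]{CE} is formulated precisely for the adjoint quotient and already incorporates the relevant hypotheses on $r$ and $\ell$.
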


\begin{proof}
First consider the case that $\sigma:\bG\rightarrow\bG_\ad$ is the adjoint
quotient map. Then the claim follows from \cite[Prop.~17.4]{CE}. Application
of the same result to the adjoint quotient
$\sigma':\bH\rightarrow\bH_\ad=\bG_\ad$, and then composing the obtained
bijections gives the statement.
\end{proof}

\section{Classical groups in non-defining characteristic} \label{sec:class}
In this section we consider unipotent $\ell$-blocks of classical groups for
odd primes $\ell$ different from their defining characteristic.

\subsection{Linear and unitary groups} \label{subsec:lin}
We start by recalling the parametrisation of conjugacy classes of
$\ell$-elements in
$\GL_n(q)$. Let $\ell$ be an odd prime not dividing $q$. We write $\cF_\ell$ for
the set of irreducible monic polynomials over $\FF_q$ whose roots have
$\ell$-power order in $\overline{\FF_q}^\times$. Let $d=d_\ell(q)$ be the order
of $q$ modulo~$\ell$, and let $\ell^a$ be the precise power of $\ell$ dividing
$q^d-1$. For $f\in \cF_\ell$ we set $c_f=\max\{0,i-a\}$ if the roots of $f$
have order $\ell^i$. Write $n=wd+r$ with $0\le r<d$. With this notation the
conjugacy classes of $\ell$-element in $\GL_n(q)$ are parametrised by
\emph{$\ell$-weight vectors of $w$}, that is, by functions
$$m:\cF_\ell\rightarrow\ZZ_{\ge0},\quad f\mapsto m_f,\qquad\text{ with }
  \sum_{f\in\cF_\ell}m_f\ell^{c_f}=w.$$
The corresponding $\ell$-elements have characteristic polynomial
$(X-1)^{dm_1+r}\prod_{f\ne X-1} f^{m_f}$ in $\GL_n(q)$, where we have set
$m_1:=m_{X-1}$. The centraliser of such an element is then a direct product
$\GL_{dm_1+r}(q)\prod_{f\ne X-1}\GL_{m_f}(q^{d\ell^{c_f}})$. 

To treat the special linear and unitary groups we need to determine the number
$k(B)$ for their principal $\ell$-block $B$; to do this, we use of the result of
Olsson \cite{Ol84} who verified the $k(B)$-conjecture for all $\ell$-blocks of
$\GL_n(q)$ and $\GU_n(q)$, where $2<\ell\ne p$.

For integers $s,t\ge1$ let $k(s,t)$ denote the number of $s$-tuples of
partitions of $t$, and for $\ell,a,w\ge1$ let
$$k(\ell,a,w)
  := \sum_\bw k(\ell^a,w_0)\prod_{i\ge1}k(\ell^a-\ell^{a-1},w_i)$$
where the sum runs over all \emph{$\ell$-compositions of $w$}, that is, all
tuples $\bw=(w_0,w_1,\ldots)$ of non-negative integers satisfying
$\sum_{i\ge0} w_i\ell^i =w$. As customary, we write $\GL_n(-q):=\GU_n(q)$
and $\SL_n(-q):=\SU_n(q)$.

\begin{thm}   \label{thm:k(B) SLn}
 Let $\SL_n(\eps q)\le G\le\GL_n(\eps q)$ with $\eps\in\{\pm1\}$, and $\ell>2$
 be a prime dividing $q-\eps$. Set $\ell^a:=(q-\eps)_\ell$,
 $\ell^m:=|Z(\SL_n(\eps q))|_\ell=\gcd(n,q-\eps)_\ell$,
 $\ell^g:=|\GL_n(\eps q):G|_\ell$ and $u:=\min\{m,g\}$.
 Let $B$ denote the principal $\ell$-block of $G$. Then
 $$k(B)=\Big(k(\ell,a,n)
     +\sum_{i=1}^u \ell^{2i-2}(\ell^2-1)\,k(\ell,a,n/\ell^i)\Big)/\ell^g.$$
\end{thm}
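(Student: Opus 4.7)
The plan is to pass to $\tilde G := \GL_n(\eps q)$, apply Olsson's formula for its principal $\ell$-block $\tilde B$, and then descend to $G$ by Clifford--Gallagher theory for the cyclic extension $G \trianglelefteq \tilde G$. For $\ell\mid q-\eps$, Fong--Srinivasan identifies $\tilde B = \cE_\ell(\tilde G,1) = \coprod_t \cE(\tilde G,t)$ with $t$ running over semisimple $\ell$-elements (parametrised by weight vectors $m$ of $n$) and $\chi \in \cE(\tilde G,t)$ paired via Jordan decomposition with a unipotent character of $C_{\tilde G^*}(t) \cong \prod_f \GL_{m_f}(\ldots)$; Olsson then gives $k(\tilde B) = k(\ell,a,n)$.

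The group $\Irr(\tilde G/G)_\ell \cong C_{\ell^g}$ acts on $\Irr(\tilde B)$ by tensoring, while tensoring with an $\ell'$-order character shifts Lusztig labels by a central $\ell'$-element of $\tilde G^*$ and moves $\chi$ out of $\tilde B$. Thus $T_{\ell'}(\chi) = 1$ for $\chi \in \tilde B$, so $|T(\chi)| = |T_\ell(\chi)|$ equals the number of constituents of $\chi|_G$. Grouping $\psi \in \Irr(B)$ by the common value $|T_\ell(\chi_\psi)|$ yields
$$\ell^g\, k(B) = \sum_{\chi \in \Irr(\tilde B)} |T_\ell(\chi)|^2.$$
Writing $|T_\ell(\chi)| = \ell^{v(\chi)}$ with $0 \le v(\chi) \le g$, and telescoping $\ell^{2v} = 1 + \sum_{i=1}^v \ell^{2i-2}(\ell^2-1)$ turns the right-hand side into
$$k(\ell,a,n) + \sum_{i\ge 1} \ell^{2i-2}(\ell^2-1)\,|\mathrm{Fix}(\lambda_i)|,$$
where $\lambda_i$ generates the subgroup of order $\ell^i$ in $\Irr(\tilde G/G)_\ell$.

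It remains to identify $|\mathrm{Fix}(\lambda_i)|$ with $k(\ell,a,n/\ell^i)$. Duality matches $\lambda_i$ with a central element $z_i \in Z(\tilde G^*)$ of order $\ell^i$, and tensoring sends $\cE(\tilde G,t)$ to $\cE(\tilde G,tz_i)$, so a character $\chi$ is $\lambda_i$-fixed iff the weight vector $m$ of $t$ is invariant under the $\langle z_i\rangle$-action $f \mapsto z_i\cdot f$ on $\cF_\ell$ \emph{and} the attached unipotent tuple $(\mu_f)$ is constant on $\langle z_i\rangle$-orbits of $f$'s. A short orbit calculation shows that the stabiliser of $f$ in $\langle z_i\rangle$ has order $\ell^{\min(c_f,i)}$, so each orbit contributes $\ell^{\max(c_O,i)}$ to $\sum m_f \ell^{c_f}$ and corresponds naturally to a single $f' \in \cF_\ell$ with $c_{f'} = \max(0,c_O - i)$; combining this polynomial bijection with the transport of partition tuples sets up a bijection between $\lambda_i$-fixed characters in $\tilde B$ and characters of the principal $\ell$-block of $\GL_{n/\ell^i}(\eps q)$, whose total is $k(\ell,a,n/\ell^i)$ by Olsson again. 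Divisibility of every orbit contribution by $\ell^i$ forces $\ell^i\mid n$ whenever $\mathrm{Fix}(\lambda_i)$ is nonempty, so $|\mathrm{Fix}(\lambda_i)| = 0$ for $i > v_\ell(n)\ge m$; together with the constraint $i\le g$ for $\lambda_i$ to exist, the sum truncates at $u = \min(m,g)$.

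The main obstacle is the combinatorial bijection of the last step: verifying that the $\langle z_i\rangle$-orbit structure on $\cF_\ell$ (free orbits on low-$c_f$ polynomials versus fixed high-$c_f$ polynomials) together with the partition-tuple parametrisation of unipotent characters matches precisely with Olsson's parametrisation for the smaller group $\GL_{n/\ell^i}(\eps q)$. The key point making this routine is that the induced permutation on $\GL$-factors of $C_{\tilde G^*}(t)$ by $\langle z_i\rangle$ acts \emph{between} orbits of factors but trivially within each orbit, so fixed partition tuples are exactly those constant on orbits, and the polynomial counts balance via the identities $\sum_{c=0}^i \#\{z_i\text{-orbits with }c_f=c\} = \ell^a$ and $\#\{z_i\text{-fixed }f:c_f = c+i\} = \ell^{a-1}(\ell-1)$, matching the $c_{f'}=0$ and $c_{f'}=c$ counts in $\cF_\ell$ for the smaller group.
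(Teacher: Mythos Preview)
Your argument is correct and follows essentially the same strategy as the paper: pass to $\tilde G=\GL_n(\eps q)$, invoke Olsson's formula $k(\tilde B)=k(\ell,a,n)$, and descend via Clifford theory for the cyclic quotient $\tilde G/G$, with the key combinatorial input being that the characters of $\tilde B$ fixed under tensoring by the linear character of order $\ell^i$ are counted by $k(\ell,a,n/\ell^i)$.

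The organisational difference is worth noting. The paper phrases the splitting behaviour in terms of the component group $A_t=|C_{\PGL_n}(\bar t)^F:C_{\PGL_n}^\circ(\bar t)^F|$ and then concludes ``by induction over the $\ell$-part $\ell^g$ of the index''. You instead derive the closed identity
\[
\ell^g\,k(B)=\sum_{\chi\in\Irr(\tilde B)}|T_\ell(\chi)|^2
\]
directly from Clifford--Gallagher (each $C_{\ell^g}$-orbit of size $\ell^g/|T_\ell(\chi)|$ contributes $|T_\ell(\chi)|$ constituents to $B$), and then expand $\ell^{2v}=1+\sum_{i=1}^v\ell^{2i-2}(\ell^2-1)$ to convert the sum of squared stabiliser orders into a sum over fixed-point counts. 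This avoids the induction entirely and makes the structure of the formula transparent. Your treatment is also more careful on one point: you correctly observe that $\chi=(t,\mu)$ is fixed by $\lambda_i$ only when \emph{both} the weight vector $m$ and the partition tuple $(\mu_f)_f$ are constant on $\langle z_i\rangle$-orbits. The paper's assertion that every $\chi\in\cE(\tilde G,t)$ splits into exactly $A_t$ pieces glosses over the dependence on $\mu$; the correct count is the one you give, and it is this count (not the count of all characters in series with $\ell^i\mid A_t$) that equals $k(\ell,a,n/\ell^i)$ after Olsson's combinatorics.
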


\begin{proof}
Let first $\eps=1$, so $\SL_n(q)\le G\le\GL_n(q)=:\tG$. Let $\tilde B$ be
the principal $\ell$-block of $\tG$; thus $\Irr(\tB)$ is the
union of the Lusztig series $\cE(\tG,t)$ where $t$ runs over $\ell$-elements
of $\tG^*=\GL_n(q)$. Then the characters in the principal $\ell$-block $B$ of
$G$ are precisely the constituents of the restrictions to $G$ of the
$\chi\in\Irr(\tilde B)$. Now by Lusztig's Jordan decomposition an irreducible
character $\chi$ lying in the Lusztig series $\cE(\tG,t)$ of a semisimple
element $t\in\tilde G^*$ restricts irreducibly to $\SL_n(q)$ unless
the image $\bar t$ of $t$ in $\PGL_n(q)$ has disconnected centraliser in
$\PGL_n$. Now assume that $t\in\GL_n(q)$ is an $\ell$-element. If the
centraliser of $\bar t$ in $\PGL_n$ is disconnected, then $\gcd(n,q-1)$ and
$o(t)$ are not coprime (see \cite[Prop.~14.20]{MT}), that is, $\ell$
divides~$n$. In this case, $\chi\in\cE(\tG,t)$ splits into precisely
$A_t:=|C_{\PGL_n}(\bar t)^F:C_{\PGL_n}^\circ(\bar t)^F|$ distinct irreducible
characters of $\SL_n(q)$, where $F$ is the standard Frobenius endomorphism on
$\PGL_n$. By Clifford's theorem this implies that $\chi$ splits into exactly
$\ell^u=\gcd\{|\tG:G|,\ell^m\}$ distinct constituents upon restriction to~$G$.
\par
Thus in order to calculate $k(B)$ we need to determine the
centraliser of $\bar t$ for $\ell$-elements $t\in\tG$. Let $i\ge1$. Then
$A_t\ge\ell^i$ if and only if $t$ and $\zeta t$ have the same eigenvalues for
an $\ell^i$th root of unity $\zeta\in\FF_q^\times$. That is, the characteristic
polynomial $f\in\FF_q[X]$ of $t$ satisfies $f(X)=f(\zeta X)$. Now multiplication
by $\zeta$ makes orbits of length $\ell^i$ on $\overline{\FF}_q^\times$, and
such an orbit is a subset of the
roots of a polynomial $f\in\cF_\ell$ if and only if $\deg(f)$ is divisible by
$\ell^i$. Thus, the $\ell$-elements $t$ with $\ell^i|A_t$ are parametrised
by maps $m:\cF_\ell\rightarrow\ZZ_{\ge0}$ as above that are constant on
$\zeta$-orbits. Arguing as in \cite[proof of Prop.~6]{Ol84} we see that
the number of irreducible characters of $\tG$ in the union of the corresponding
Lusztig series is $k(\ell,a,n/\ell^i)$. The claim now follows
by induction over the $\ell$-part $\ell^g$ of the index of $G$ in $\tG$.
\end{proof}

\begin{lem}   \label{lem:p(w)}
 Let $\ell\ge2$ and $w\ge1$. The number $p_\ell(w)$ of $\ell$-compositions of
 $w$ (partitions into parts of $\ell$-power order) satisfies
 $p_\ell(w)\le \ell^{\binom{u+1}{2}}$ where $u=\flr{\log_\ell w}$.
\end{lem}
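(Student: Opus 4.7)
The plan is to unfold the definition of an $\ell$-composition, note that only finitely many coordinates can be nonzero, and then bound the number of choices for each coordinate using the elementary inequality $w < \ell^{u+1}$ that comes from the definition $u = \lfloor \log_\ell w \rfloor$.

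More precisely, I would start by recalling that an $\ell$-composition of $w$ is a tuple $\bw = (w_0, w_1, \ldots)$ of non-negative integers with $\sum_{i \ge 0} w_i \ell^i = w$. Since each term $w_i \ell^i$ is at most $w$, one has $w_i \le w/\ell^i$; in particular $w_i = 0$ whenever $i > u$, so an $\ell$-composition is really a finite tuple $(w_0, w_1, \ldots, w_u)$.

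Next, I would observe that $w_0$ is completely determined by the remaining coordinates via $w_0 = w - \sum_{i=1}^u w_i \ell^i$. Consequently,
\[
  p_\ell(w) \;\le\; \bigl|\{(w_1,\ldots,w_u)\in\ZZ_{\ge 0}^u : w_i \le \lfloor w/\ell^i\rfloor \text{ for all } i\}\bigr|
  \;\le\; \prod_{i=1}^u \bigl(\lfloor w/\ell^i\rfloor+1\bigr).
\]
Now the defining inequality $w < \ell^{u+1}$ yields $\lfloor w/\ell^i\rfloor + 1 \le \ell^{u-i+1}$ for each $i = 1, \ldots, u$, since $\lfloor w/\ell^i\rfloor$ is an integer strictly less than $\ell^{u-i+1}$. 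Multiplying these bounds gives
\[
  p_\ell(w) \;\le\; \prod_{i=1}^u \ell^{u-i+1} \;=\; \ell^{\,1+2+\cdots+u} \;=\; \ell^{\binom{u+1}{2}},
\]
which is the desired estimate.

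There is no real obstacle here; the only thing to be careful about is ensuring that the bound $\lfloor w/\ell^i\rfloor+1 \le \ell^{u-i+1}$ is applied correctly at the top index $i = u$, where it degenerates to $w_u \le \ell-1$ (i.e.\ at most $\ell$ choices), matching $\ell^{u-u+1}=\ell$. The alternative of inducting on $u$ or on $w$ would work too, but the direct counting argument above is the shortest.
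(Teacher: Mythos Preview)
Your proof is correct and takes a slightly different, more direct route than the paper's. The paper argues recursively: it checks $w<\ell^2$ by hand, then uses the identity $p_\ell(w)=\sum_{j=0}^{\lfloor w/\ell\rfloor} p_\ell(j)$ (obtained by sorting $\ell$-compositions according to the value of $w_0$) and iterates to reach the product $\prod_{i=1}^u w/\ell^i = w^u/\ell^{\binom{u+1}{2}}$, after which $w<\ell^{u+1}$ finishes. You bypass the recursion entirely: having observed that $w_0$ is determined by $(w_1,\dots,w_u)$, you bound the number of choices for each remaining coordinate by $\lfloor w/\ell^i\rfloor+1\le\ell^{u+1-i}$ and multiply. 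Both arguments land on essentially the same product of $u$ factors; the paper's version has the mild by-product of exhibiting the exact recurrence for $p_\ell$, while yours is self-contained and needs no separate base case.
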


\begin{proof}
The claim is easily verified for $w<\ell^2$. Else, arranging the
sought for partitions according to their number of parts of length~1 we indeed
get, arguing by induction
$$p_\ell(w)\le\sum_{j=0}^{\flr{w/\ell}}p_\ell\big(\flr{w/\ell}-j\big)
  \le \frac{w}{\ell}\,p_\ell(\flr{w/\ell})\le \prod_{i=1}^u \frac{w}{\ell^i}
  =\frac{w^u}{\ell^{\binom{u+1}{2}}} \le\ell^{\binom{u+1}{2}}.\qedhere$$
\end{proof}

\begin{thm}   \label{thm:SLn}
 Let $H=G/Z$ with $G\in\{\SL_n(q),\SU_n(q)\}$ and $Z\le Z(G)$. Let $\ell>2$ be
 a prime. Then the unipotent $\ell$-blocks of $H$ are not counterexamples to
 Brauer's $k(B)$-conjecture.
\end{thm}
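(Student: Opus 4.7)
Plan.
The plan is to combine the closed formula of Theorem~\ref{thm:k(B) SLn} for the principal $\ell$-block with Olsson's verification \cite{Ol84} of the $k(B)$-conjecture for $\GL_n(\eps q)$ and $\GU_n(q)$, and the combinatorial estimate in Lemma~\ref{lem:p(w)}.

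First, by Theorem~\ref{thm:nav} I may reduce to faithful $\ell$-blocks of the unique $\ell'$-covering of $\PSL_n(\eps q)$, and realise this covering as an intermediate group $G$ with $\SL_n(\eps q) \le G \le \GL_n(\eps q)$, to which Theorem~\ref{thm:k(B) SLn} directly applies. After this reduction $\ell \nmid |Z(G)|$, and since $\ell \ge 3$ is good for types $A_{n-1}$ and $\tw2A_{n-1}$, Lemma~\ref{lem:ab def} handles the abelian defect case, so I may assume the defect group is non-abelian. Set $d := d_\ell(\eps q)$. If $d > 1$, then $\ell \nmid q - \eps$, hence $\ell \nmid |\GL_n(\eps q):G|$ and $\ell \nmid |Z(\SL_n(\eps q))|$; the argument in the proof of Theorem~\ref{thm:k(B) SLn} then shows that all characters in unipotent $\ell$-blocks of $\GL_n(\eps q)$ restrict irreducibly to $G$, yielding a bijection between the unipotent $\ell$-blocks of $G$ and those of $\GL_n(\eps q)$ with equal $k$-values and isomorphic defect groups, so Olsson's theorem closes the case. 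If $d = 1$, then by Fong--Srinivasan the only unipotent $\ell$-block of $\GL_n(\eps q)$ -- hence of $G$ -- is the principal one, since the $1$-core of any partition of $n$ is empty.

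It remains to show the strict inequality $k(B) < |D|$ for the principal $\ell$-block $B$ of $G$ when $d = 1$ and $n \ge \ell$ (which is precisely when $D$ is non-abelian). Using Theorem~\ref{thm:k(B) SLn} and the identity $|D|\cdot\ell^g = |\GL_n(\eps q)|_\ell$, this reduces to verifying
\[
 k(\ell,a,n) + \sum_{i=1}^{u} (\ell^{2i}-\ell^{2i-2})\,k(\ell,a,n/\ell^i) \;<\; |\GL_n(\eps q)|_\ell.
\]
Olsson's bound applied to the principal $\ell$-block of $\GL_{n/\ell^i}(\eps q)$ gives $k(\ell,a,n/\ell^i) \le |\GL_{n/\ell^i}(\eps q)|_\ell$ for $0 \le i \le u$, and the ratio $|\GL_n(\eps q)|_\ell / |\GL_{n/\ell^i}(\eps q)|_\ell = \ell^{a(n-n/\ell^i)}\cdot (n!)_\ell / ((n/\ell^i)!)_\ell$ grows much faster than $\ell^{2i}$ as $i$ increases. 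Substituting these bounds, the correction terms are dominated by the right-hand side, and the extra slack needed to make the inequality strict in the non-abelian defect case is supplied by Lemma~\ref{lem:p(w)}, which bounds the number of $\ell$-compositions entering $k(\ell,a,w)$.

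The technical heart is this final term-by-term estimate: the $\ell^{2i}$-growth of the correction coefficients must be absorbed by the drop in $|\GL_{n/\ell^i}(\eps q)|_\ell$ relative to $|\GL_n(\eps q)|_\ell$. While this is automatic for large $n$ by the geometric nature of the series, for $n$ close to $\ell^u$ one has to carefully track the slack in Olsson's bound, where Lemma~\ref{lem:p(w)} provides the essential combinatorial input. The main obstacle is thus bookkeeping these exponents cleanly and handling the boundary cases.
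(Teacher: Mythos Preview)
Your opening reduction does not work. The $\ell'$-covering of $\PSL_n(\eps q)$ is the \emph{quotient} $\SL_n(\eps q)/O_\ell\bigl(Z(\SL_n(\eps q))\bigr)$; when $\ell\mid\gcd(n,q-\eps)$ this group has order strictly less than $|\SL_n(\eps q)|$ and hence cannot be isomorphic to any intermediate group $\SL_n(\eps q)\le G\le\GL_n(\eps q)$ (indeed every such $G$ has $Z(G)\ge Z(\SL_n(\eps q))$, so $\ell\mid|Z(G)|$ there as well). Thus Theorem~\ref{thm:k(B) SLn} does not apply to the group you have reduced to, your claim ``after this reduction $\ell\nmid|Z(G)|$'' fails, and Lemma~\ref{lem:ab def} is unavailable. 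The case $d=1$, $\ell\mid n$ is exactly the one left uncovered: here the defect group $\bar D$ in a proper quotient $H=\SL_n(\eps q)/Z$ with $\ell\mid|Z|$ is smaller than the Sylow $\ell$-subgroup $D$ of $\SL_n(\eps q)$ by a factor up to $\ell^m$, and one must bound $k(\bar B)$ against this smaller $|\bar D|$. The paper does so by taking $G=\SL_n(\eps q)$ itself in Theorem~\ref{thm:k(B) SLn} (so $g=a$, $u=m$) and proving that already $k(B)\le|\bar D|=|D|/\ell^m$; absorbing the extra factor $\ell^m$ is what forces the sharper estimate $k(\ell,a,w)\le p_\ell(w)\,\ell^{aw}$ in combination with Lemma~\ref{lem:p(w)}, together with an explicit treatment of $n=\ell$ and a few further small~$n$. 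Your coarser term-by-term bound via Olsson's theorem for $\GL_{n/\ell^i}(\eps q)$ would not suffice even after fixing the reduction, since it only yields $\sum_{i} \ell^{2i}k(\ell,a,n/\ell^i)\le (u+1)\,|\GL_n(\eps q)|_\ell$ rather than a bound by $|\GL_n(\eps q)|_\ell$ itself.

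A secondary point: Lemma~\ref{lem:ab def} only concludes that $B$ is not a \emph{minimal} counterexample, which is weaker than what the theorem asserts; the paper's proof avoids Lemma~\ref{lem:ab def} entirely and verifies $k(B)\le|D|$ directly in every case.
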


\begin{proof}
We embed $G:=\SL_n(q)\unlhd\tilde G:=\GL_n(q)$. Let $B$ be a unipotent
$\ell$-block of $G$ with defect group $D$. Then there is a unipotent
$\ell$-block $\tB$ of $\tilde G$ covering $B$, with defect group $\tD\ge D$,
and by \cite[Thm.]{Ol84} we have $k(\tB)\le |\tD|$. As $\tG/G$ is cyclic,
restriction of characters from $\tilde G$ to $G$ is multiplicity-free. 
Moreover, as seen in the previous proof, an irreducible character
$\chi\in\Irr(\tB)$ restricts irreducibly to $G$ unless $\ell$ divides
$r:=\gcd(n,q-1)$.
\par
First assume that $\ell$ does not divide $r$. Then all $\chi\in\Irr(\tB)$
restrict irreducibly to $G$ so $k(B)=k(\tB)/\ell^a$. As $|D|=|\tD|/\ell^a$ the
conjecture follows for the block $B$ of $\SL_n(q)$. Furthermore, $|Z(G)|=r$ is
not divisible
by $\ell$ in this case, so all characters in $\Irr(B)$ have $Z(G)$ in their
kernel, and the claim also follows for $H=G/Z$ for any $Z\le Z(G)$.
\par
So now assume that $\ell|\gcd(n,q-1)$, and write $\ell^a$ for the precise power
of $\ell$ dividing $q-1$, and $\ell^m$ for the $\ell$-part of $|Z(\SL_n(q))|$.
Then all unipotent characters of $G$ lie in the
principal $\ell$-block (see e.g.~\cite[Thm.]{CE94}), so the defect group $D$ of
$B$ is a Sylow $\ell$-subgroup of $G$. From Theorem~\ref{thm:k(B) SLn} we now
get, arguing as in the proof of \cite[(I)]{Ol84}
$$k(B)
 \le\Big(p_\ell(n)\ell^{an}+\sum_{i=1}^mp_\ell(n/\ell^i)\ell^{an/\ell^i+2i}\big)
 /\ell^a
 \le p_\ell(n)\big(\ell^{an}+\sum_{i=1}^m\ell^{an/\ell^i+2i}\big)/\ell^a.
$$
First assume that $n\ge2\ell$. Then $an/\ell^i+2i\le an-2i$ for $i\ge1$ and so
by Lemma~\ref{lem:p(w)}
$$k(B)\le p_\ell(n)\ell^{an-a+1/(\ln(\ell)(\ell^2-1))}
  \le \ell^{an-a+1/(\ln(\ell)(\ell^2-1))+\binom{u+1}{2}}$$
with $u=\flr{\log_\ell(n)}$. As we have $|D|=\ell^{an-a+(n!)_\ell}$ and
$|\bar D|=\ell^{an-a+(n!)_\ell-m}$ where $m\le u$, the claim follows
except when $n=2\ell$ or $\ell=3$, $n\le12$. In the first case we use that
$p_\ell(2\ell)=3$, and the latter cases can be checked individually.
\par
Finally assume that $n=\ell$. Then by \cite[p.~46]{Ol84} we have
$k(\tilde B)\le\ell^{a\ell}+\ell^a$ and by Theorem~\ref{thm:k(B) SLn},
$k(B)\le \ell^{a\ell-a}+\ell^2<\ell^{a\ell-a+1}=|\tilde D|/\ell^a=|D|$.
Furthermore, a character parametrised by a semisimple $\ell$-element
$t\in\GL_n(q)$ is trivial on the center if and only if $t$ lies in the derived
subgroup $\SL_n(q)$. From this it can then be checked
that the principal $\ell$-block $\bar B$ of $\PSL_\ell(q)$ has
$k(\bar B)=(k(B)+\ell-1))/\ell\le |D|/\ell=|\bar D|$, with
$\bar D=D/Z(\SL_\ell(q))$ a Sylow $\ell$-subgroup of $\PSL_\ell(q)$, so our
claim follows for $H=G/Z(G)$ as well.
\par
The arguments for $G=\SU_n(q)$ are entirely analogous, again relying on the
explicit formula for $k(\tilde B)$ in \cite[Prop.~6]{Ol84} and for $k(B)$ in
Theorem~\ref{thm:k(B) SLn}.
\end{proof}

\subsection{Classical groups}   \label{subsec:class}
We now turn to the quasi-simple groups of symplectic and orthogonal type.
Here, in the spirit of Olsson's result for linear and unitary groups, we first
derive a formula for the number of characters in unipotent blocks which
may be of independent interest.    \par
Let $G_n(q)$ be one of $\Sp_{2n}(q)$ or $\SO_{2n+1}(q)$ and $\ell\ne2$ an odd
prime not dividing $q$. We write $d=d_\ell(q)$ for the order of $q$
modulo~$\ell$
and let $d':=d/\gcd(d,2)$. The unipotent $\ell$-blocks of $G_n(q)$ are
parametrised by $d$-cuspidal pairs in $G_n(q)$, that is, by pairs $(L,\la)$
where $L=G_{n-wd'}(q)\times T_d^w$ is a $d$-split Levi subgroup of $G_n(q)$
(with a torus $T_d\cong \GL_1(q^d)$ if $d$ is odd, respectively
$T_d\cong \GU_1(q^{d'})$ if $d=2d'$ is even), and $\la$ is a $d$-cuspidal
unipotent character of $L$, and hence of $G_{n-wd'}(q)$ (see
\cite[Thm.]{CE94}). We then write $b(L,\la)$ for this block, and call $w$
its \emph{weight}. The unipotent characters in the block $b(L,\la)$ are then
the members of the $d$-Harish-Chandra series above $(L,\la)$, so by
\cite[Thm.~3.2]{BMM} they are in bijection with the irreducible characters of
the relative Weyl group of this $d$-cuspidal pair, which in this case is the
imprimitive complex reflection group $G(2d',1,w)$. In particular their number
is given by $k(2d',w)$.

The following result bears a strong resemblance to \cite[Prop.~6]{Ol84};
and again it expresses $k(B)$ only in terms of the weight $w$ of $B$, of
$d=d_\ell(q)$ and the $\ell$-part of $q^d-1$:

\begin{prop}   \label{prop:kB BC}
 Let $G\in\{\Sp_{2n}(q),\SO_{2n+1}(q)\}$, let $\ell\ne2$ and $B$ be a
 unipotent $\ell$-block of $G$ of weight~$w$. Let $d=d_\ell(q)$,
 $d':=d/\gcd(d,2)$ and write $\ell^a$ for the precise power of $\ell$ dividing
 $q^d-1$. Then
 $$k(B)=\sum_{\bw} k\big(2d'+(\ell^a-1)/2d',w_0\big)
         \prod_{i\ge1}k\big((\ell^a-\ell^{a-1})/2d',w_i\big),$$
 where the sum runs over all sequences $\bw=(w_0,w_1,\ldots)$ of non-negative
 integers satisfying
 $$\sum_{i\ge0} w_i\ell^i=w.$$
\end{prop}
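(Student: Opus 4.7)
The plan is to extend Olsson's argument for $\GL_n$ and $\GU_n$ from \cite[Prop.~6]{Ol84} to the classical groups of types $B$ and $C$, organised around Jordan decomposition of blocks. Let $B$ be the given unipotent $\ell$-block of $G$ of weight $w$. Since $\ell>2$ is good for $\bG$, the theorem of Cabanes--Enguehard on Jordan decomposition of blocks partitions
$$\Irr(B) = \bigsqcup_t \bigl(\cE(G,t) \cap \Irr(B)\bigr),$$
where $t$ runs over $G^*$-conjugacy classes of semisimple $\ell$-elements in $G^*$; for each $t$, the intersection is in bijection (via Jordan decomposition) with the unipotent characters of $C^* := C_{G^*}(t)$ lying in a specific unipotent $\ell$-block $b_t$ of $C^*$ corresponding to $B$.

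Next I would describe the centralisers $C^*$. As $\ell$ is odd and the natural module of $G^*$ carries a non-degenerate bilinear form, the eigenvalues of $t$ are $\ell$-power roots of unity stable under both $x \mapsto x^q$ and $x \mapsto x^{-1}$; the joint group $\langle q, -1\rangle$ acts with generic orbit size $2d'$ on primitive $\ell^i$-th roots for $i \le a$ (the factor~$2$ being absorbed into $\langle q\rangle$ exactly when $d$ is even). This yields $(\ell^a-1)/2d'$ non-trivial orbits at ``tier~$0$'' (eigenvalue order~$\le \ell^a$) and $(\ell^a-\ell^{a-1})/2d'$ orbits at each higher tier $i \ge 1$ (eigenvalue order exactly $\ell^{a+i}$). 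With notation as in Section~\ref{subsec:lin},
$$C^* = G^*_{m_0}(q) \times \prod_{f} \GL_{m_f}\bigl(\eps_f q^{d'\ell^{c_f}}\bigr),$$
for suitable signs $\eps_f\in\{\pm1\}$, where $G^*_{m_0}(q)$ is the classical factor of the same type arising from the $1$-eigenspace, and the conjugacy class of $t$ is encoded by $(m_0, (m_f)_f)$.

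For fixed $t$, the block $b_t$ factors across $C^*$. On each general linear or unitary factor the relevant multiplicative order is $1$, so Olsson's \cite[Prop.~6]{Ol84} identifies the number of unipotent characters in the induced principal unipotent $\ell$-block with $k(\ell^{a+c_f}, m_f)$. On the classical factor $G^*_{m_0}(q)$ the block $b_t$ is itself a unipotent $\ell$-block of some weight $w^* \le w$, whose unipotent characters are labelled by the irreducible characters of the relative Weyl group $G(2d', 1, w^*)$ of the underlying $d$-cuspidal pair, numbering $k(2d', w^*)$ by \cite[Thm.~3.2]{BMM}.

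Summing over $t$, I would group the orbit data by tier, setting $w_i = \sum_{c_f = i} m_f$ for $i \ge 1$ and $w_0 = w^* + \sum_{c_f = 0} m_f$, so that $\sum_{i \ge 0} w_i \ell^i = w$. The convolution identity $k(m+n, s) = \sum_{u+v=s} k(m, u) k(n, v)$, applied iteratively to collapse orbit slots within each tier, turns the tier-$i$ product ($i \ge 1$) into $k((\ell^a - \ell^{a-1})/2d', w_i)$, while at tier~$0$ it merges the classical contribution $k(2d', w^*)$ with $k((\ell^a-1)/2d', w_0 - w^*)$ into $k(2d' + (\ell^a-1)/2d', w_0)$, giving the claimed formula. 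The main technical point is to verify this merging on the classical factor: one must check that the $2d'$ ``slots'' coming from $G(2d', 1, w^*)$ really combine freely with the $(\ell^a-1)/2d'$ eigenvalue-orbit slots into an unconstrained labelling by $(2d' + (\ell^a-1)/2d')$-tuples of partitions of total size $w_0$, without any compatibility condition between the two halves.
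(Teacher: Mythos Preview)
Your overall architecture---Cabanes--Enguehard block decomposition, explicit centraliser description, counting unipotent characters in each $b_t$, then summing over $t$ and collapsing via the convolution identity $k(m+n,s)=\sum_{u+v=s}k(m,u)k(n,v)$---is exactly the paper's approach. The final merging step you describe is correct and is precisely how the formula is assembled.

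There is, however, a genuine slip in the middle. For a fixed $\ell$-element $t$, Jordan decomposition identifies $\cE(G,t)\cap\Irr(B)$ with the \emph{unipotent} characters of $C^*$ in $b_t$. On a factor $\GL_{m_f}(\eps_f q^{d'\ell^{c_f}})$ with multiplicative order~$1$, the unipotent characters in the principal block are simply all unipotent characters of that factor, and there are $\pi(m_f)=k(1,m_f)$ of them---not $k(\ell^{a+c_f},m_f)$. Olsson's Proposition~6 computes the \emph{total} $k(B)$ of a block of $\GL_n$, not the number of unipotent characters in it, so invoking it at this stage is a category error. Fortunately your convolution step only works with the correct value $k(1,m_f)$: with $(\ell^a-\ell^{a-1})/2d'$ orbit slots at tier~$i\ge1$, iterating the identity on $k(1,\cdot)$ gives $k((\ell^a-\ell^{a-1})/2d',w_i)$, and similarly at tier~$0$ one merges $k(2d',w^*)$ with $k((\ell^a-1)/2d',w_0-w^*)$. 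Had the per-factor contribution really been $k(\ell^{a+c_f},m_f)$, the convolution would produce the wrong exponent in each slot.

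Two further points the paper makes explicit and you leave implicit: the cases $d$ odd and $d$ even genuinely differ in the centraliser structure (orbits of~$^*$ on $\cF$ have length~$2$ in the first case and length~$1$ in the second, giving $\GL$ versus $\GU$ factors), and the argument as written treats $G=\SO_{2n+1}(q)$ with $G^*=\Sp_{2n}(q)$; passing to $G=\Sp_{2n}(q)$ requires a separate observation (the paper uses the Geck--Hiss bijection \cite[Prop.~4.2]{GH91} between $\ell$-classes in $G$ and $G^*$) since centralisers of $\ell$-elements in $\SO_{2n+1}$ are not literally the same groups.
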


\begin{proof}
First assume that $G=\SO_{2n+1}(q)$. We count the characters in $B$
lying in Lusztig series $\cE(G,t)$, with $t\in G^*=\Sp_{2n}(q)$ an
$\ell$-element. The conjugacy classes of semisimple elements in $\Sp_{2n}(q)$
are uniquely determined by the characteristic polynomials of their elements
in the natural $2n$-dimensional matrix representation. Furthermore, there is a
semisimple element with characteristic polynomial a given monic polynomial
$f\in\FF_q[X]$ of degree $2n$ with non-zero constant coefficient if and only
if any root of $f$ in $\overline{\FF}_q$ has the same multiplicity as its
inverse. That is to say, $f$ must be invariant under the transformation
$$f\mapsto f^*:=X^{\deg(f)}f(X^{-1})/f(0).$$
Let $\cF$ denote the set of monic irreducible polynomials $f\in\FF_q[X]$ whose
roots in $\overline{\FF}_q$ are of $\ell$-power order. We choose a system of
representatives $\bar\cF$ of $^*$-orbits in $\cF$. Write $\cF^i$ for the subset
of polynomials in~$\cF$ whose roots have order $\ell^i$, $i\ge0$. Then
$f\in\cF$ has degree
$$c_f:=\deg(f)=\begin{cases}
   1& \text{if $f\in \cF^0=\{X-1\}$},\\
   d& \text{if $f\in \cF^i$ with $1\le i\le a$},\\
   d\ell^i& \text{if $f\in \cF^{a+i}$ with $i\ge1$.}\end{cases}
$$
Now first assume that $d$ is odd. Then no $1\ne\al\in\overline{\FF}_q^\times$
of $\ell$-power order is Galois conjugate to its inverse, as otherwise
$\al^{q^c}=\al^{-1}$, that is, $\al^{q^c+1}=1$ for some $c\ge0$, which is
absurd. Hence, all orbits of $^*$ on $\cF^i$, $i\ge1$, have length~2.
Thus the classes of $\ell$-elements $t$ in $G^*$ are in bijection with maps
$m:\bar\cF\rightarrow\ZZ_{\ge0}$, $f\mapsto m_f$, with
$\sum_{f\in\bar\cF} m_f\deg(f)=n$, such that $t$ has characteristic polynomial
$(X-1)^{2m_1}\prod_{X-1\ne f\in\bar\cF}(f f^*)^{m_f}$, where for notational
convenience we write $m_1:=m_{X-1}$. The centraliser $C=C_{G^*}(t)$ of
$t\in G^*$ corresponding to the map $m$ is then isomorphic to
$\Sp_{2m_1}(q)\times\prod_{X-1\ne f\in\bar\cF} \GL_{m_f}(q^{c_f})$.
\par
According to \cite[Thm.]{CE94} a character of $\cE(G,t)$ lies in the block $B$
parametrised by $(L,\la)$ if its Jordan corresponding unipotent character of
$C$ lies in the unipotent block $B_C$ corresponding to the same pair $(L,\la)$.
For this to happen we must have in particular that $L$ is isomorphic to a
$d$-split Levi subgroup of $C^*$, hence $m_1=n-(w-u)d$ for some $u\ge0$. The
unipotent characters of $C$ in $B_C$ are then the outer tensor products of
unipotent characters in the block corresponding to $(L,\la)$ in $\Sp_{2m_1}(q)$
times arbitrary unipotent characters in the other factors $\GL_{m_f}(q^{c_f})$,
so their number is given by $k(2d,u)\prod_{f\ne X-1}\pi(m_f)$ with
$\pi(m_f)=k(1,m_f)$ the number of partitions of $m_f$. Now clearly the number
of elements in $\bar\cF^{a+i}$ is $(\ell^a-\ell^{a-1})/(2d)$. Then the
combinatorial argument in \cite[p.~45]{Ol84}, with $2d$ replacing $e'$,
applies to show the stated formula for $k(B)$.
\par
Next assume that $d=2d'$ is even. Then all $\ell$-elements in
$\overline{\FF_q}^\times$ are Galois conjugate to their inverses, and so
$f=f^*$ for all $f\in\cF$. Thus the classes of $\ell$-elements $t\in G^*$ are
in bijection with maps $m:\cF\rightarrow\ZZ_{\ge0}$, $f\mapsto m_f$, with
$\sum_{f\in \cF} m_f\deg(f)=2n$, such that $t$ has characteristic polynomial
$(X-1)^{m_1}\prod_{X-1\ne f\in \cF}f^{m_f}$, where again we write
$m_1:=m_{X-1}$. The centraliser $C=C_{G^*}(t)$ of $t\in G^*$ corresponding
to the map $m$ is then isomorphic to
$\Sp_{m_1}(q)\times\prod_{X-1\ne f\in \cF} \GU_{m_f}(q^{c_f/2})$.
Again by \cite[Thm.]{CE94} a character of $\cE(G,t)$ lies in the block $B$
parametrised by $(L,\la)$ if its Jordan corresponding unipotent character of
$C$ lies in the unipotent block $B_C$ corresponding to $(L,\la)$.
In that case, $L$ is isomorphic to a $d$-split Levi subgroup of $C$, hence
$m_1=2n-2(w-u)d'$ for some $u\ge0$. As before the unipotent characters of $C$
in $B_C$ are the outer tensor products of unipotent characters in the block
corresponding to $(L,\la)$ in $\Sp_{m_1}(q)$ times arbitrary unipotent
characters in the other factors $\GU_{m_f}(q^{c_f/2})$, so their number is
given by $k(d,u)\prod_{f\ne X-1}\pi(m_f/2)$. The number of elements in
$\cF^{a+i}$ is $(\ell^a-\ell^{a-1})/d$. Again we conclude as in
\cite[p.~45]{Ol84}, using that $d=2d'$.
\par
To treat $G=\Sp_{2n}(q)$ we need to consider $\ell$-elements in the dual
group $\SO_{2n+1}(q)$. But according to \cite[Prop.~4.2]{GH91} there is a
bijection between conjugacy classes of $\ell$-elements in $G$ and $G^*$ sending
centralisers to their duals. Since centralisers of odd order elements in both
$G,G^*$ are always connected, they have the same numbers of unipotent
characters. So the count for $\Sp_{2n}(q)$ is exactly the same as for
$\SO_{2n+1}(q)$ and we are done.
\end{proof}

\subsection{Even-dimensional orthogonal groups}   \label{subsec:ortho}
We next consider the even dimensional orthogonal groups.
Let $G_n^\eps(q)=\SO_{2n}^\eps(q)$, with $\eps\in\{\pm\}$, $n\ge4$. (Here, as customary, we write $\SO_{2n}$ for the connected component of the identity in
the general orthogonal group $\GO_{2n}$.) We recall some facts on blocks of
$G_n^\eps(q)$ from \cite[Thm.]{CE94}. Let $\ell$ be an odd prime and
$d=d_\ell(q)$. The unipotent $\ell$-blocks of $G_n^\eps(q)$ are again
parametrised by $d$-cuspidal pairs $(L,\la)$, where
$L=G_{n-wd'}^\delta(q)\times T_d^w$, with either $T_d=\GL_1(q^d)$ for odd
$d=d'$, or $T_d=\GU_1(q^{d'})$ for $d=2d'$ even,
and $\delta=\eps$ if $d$ is odd or $w$ is even, and $\delta=-\eps$ else, and
$\la$ is a $d$-cuspidal unipotent character of $L$. We write $B=b(L,\la)$ for
the corresponding block; and call $w$ the \emph{weight} of $B$. A defect group
of $b(L,\la)$ is then obtained as a Sylow $\ell$-subgroup of
$C_G([\bL,\bL])$, which in our case is $\GO_{2wd'}^{\eps\delta}(q)$. Observe
that by the parity condition on the sign $\eps\delta$, a Sylow $\ell$-subgroup
of $\GO_{2wd'}^{\eps\delta}(q)$ is already a Sylow
$\ell$-subgroup of $\SO_{2wd'+1}(q)$. The number of unipotent characters of
$G_n^\eps(q)$ in the $\ell$-block $B$ then equals the number of irreducible
characters of the relative Weyl group of $(L,\la)$, which is $G(2d',1,w)$
unless $\la$ is parametrised by a degenerate symbol, in which case it is the
normal subgroup $G(2d',2,w)$ (see \cite[p.~51]{BMM}).   \par
We first derive a closed formula for the number of characters in blocks of
the (disconnected) general orthogonal groups. Let $\tilde B$ be a block of
$\GO_{2n}^\eps(q)$ lying above the unipotent block $B=b(L,\la)$ of
$\SO_{2n}^\eps(q)$. Then either $\tilde B$ lies above a unique unipotent block
of $\SO_{2n}^\eps(q)$, in which case the tensor product of $\tilde B$ with the
non-trivial linear character of $\GO_{2n}^\eps(q)$ is another block above $B$,
or else the cuspidal pair $(L,\la)$ is such that $\la$ is labelled by a
degenerate symbol, in which case $\tilde B$ lies above the two blocks
parametrised by the two unipotent characters labelled by this degenerate symbol.
In either case, the unipotent characters in $\tilde B$ are in bijection with
the irreducible characters of $G(2d',1,w)$, which is the relative Weyl group
in $\GO_{2n}^\eps(q)$ of the cuspidal pair $(L,\la)$.

\begin{prop}   \label{prop:kB D}
 Let $\ell\ne2$ and $B$ be an $\ell$-block of $\GO_{2n}^\eps(q)$, $n\ge4$,
 lying above a unipotent $\ell$-block of $\SO_{2n}^\eps(q)$ of weight $w$.
 Let $d=d_\ell(q)$, $d':=d/\gcd(d,2)$ and write $\ell^a$ for the precise
 power of $\ell$ dividing $q^d-1$. Then
 $$k(B)= \sum_{\bw} k\big(2d'+(\ell^a-1)/2d',w_0\big)
         \prod_{i\ge1}k\big((\ell^a-\ell^{a-1})/2d',w_i\big),$$
 where the sum runs over all sequences $\bw=(w_0,w_1,\ldots)$ of non-negative
 integers satisfying
 $$\sum_{i\ge0} w_i\ell^i=w.$$
\end{prop}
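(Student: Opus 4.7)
The plan is to reduce to Proposition~\ref{prop:kB BC} via a two-step argument. First, use Clifford theory between $G:=\SO_{2n}^\eps(q)$ and $\tG:=\GO_{2n}^\eps(q)$ to show that $k(\tB)=k(B)$ for any unipotent $\ell$-block $B$ of $G$ lying under $\tB$. Second, prove the formula for $k(B)$ by mimicking the combinatorial count of Proposition~\ref{prop:kB BC} with $G$ in place of $\Sp_{2n}(q)$ or $\SO_{2n+1}(q)$.

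For the Clifford step, the discussion preceding the statement distinguishes two cases. In the non-degenerate case, $\tB$ sits above a unique $B$; every character in $B$ is outer-stable and extends to two characters of $\tG$ (one in $\tB$, one in $\tB\otimes\mathrm{sgn}$), whence $k(\tB)=k(B)$. In the degenerate case, $\tB$ sits above two blocks $B,B'$ of $G$ swapped by the outer involution; non-stable characters pair up and each pair induces to one character of $\tB$, again giving $k(\tB)=k(B)=k(B')$. In either case it suffices to establish the formula for $k(B)$.

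For the latter, I would follow the proof of Proposition~\ref{prop:kB BC} step by step. Classes of semisimple $\ell$-elements $t\in G^*$ are parametrised by maps $m:\bar\cF\to\ZZ_{\ge0}$ (for $d$ odd) or $m:\cF\to\ZZ_{\ge0}$ (for $d$ even) subject to $\sum_f m_f\deg(f)=2n$. The centraliser $C_{G^*}(t)$ then has the form $\SO_{2m_1}^{\eps'}(q)\times\prod_{f\ne X-1}\GL_{m_f}(q^{c_f})$ for $d$ odd, with a sign $\eps'$ determined by $\eps$ and $m$, and an analogous form with $\GU$ factors for $d$ even. By \cite[Thm.]{CE94}, the characters of $\cE(G,t)$ lying in $B=b(L,\la)$ correspond to unipotent characters of $C_{G^*}(t)$ in the sub-block indexed by $(L,\la)$; this forces $m_1=n-(w-u)d'$ for some $u\ge0$, and their number equals $k(2d',u)\prod_{f\ne X-1}\pi(m_f)$ (respectively $k(d,u)\prod\pi(m_f/2)$). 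Invoking the combinatorial identity of \cite[p.~45]{Ol84} with $|\bar\cF^{a+i}|=(\ell^a-\ell^{a-1})/(2d)$ (respectively $|\cF^{a+i}|=(\ell^a-\ell^{a-1})/d$) then yields the sum over $\ell$-compositions of $w$ in the claimed form.

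The main obstacle I anticipate lies in carefully handling the possible disconnectedness of centralisers of odd-order elements in $\SO_{2n}$, which is absent for $\Sp_{2n}$ and $\SO_{2n+1}$ in Proposition~\ref{prop:kB BC}. One must verify that the Jordan decomposition statement of \cite[Thm.]{CE94} produces the correct count despite possible component-group contributions, and that in the degenerate-symbol case the Clifford pairing of $B$ with $B'$ is indeed a bijection. Once these technicalities are dealt with (using, for instance, the duality between $\ell$-conjugacy classes and their centralisers as in \cite[Prop.~4.2]{GH91}), the combinatorial simplification is essentially identical to Olsson's.
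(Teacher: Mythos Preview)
Your Clifford step is correct and does yield $k(\tB)=k(B)$, but it is a detour the paper does not take: the paper computes $k(B)$ for the $\GO_{2n}^\eps(q)$-block directly, without descending to $\SO_{2n}^\eps(q)$. The crucial observation is that the centraliser of an $\ell$-element $t$ in $\hat G^*\cong\GO_{2n}^\eps(q)$ has a factor $\GO_{2m_1}^\delta(q)$ (not $\SO$), and in $\GO_{2m_1}$ the relative Weyl group of the cuspidal pair is \emph{always} $G(2d',1,u)$, regardless of whether $\lambda$ is degenerate. This gives the uniform per-series count $k(2d',u)$, and since the $\GO$-conjugacy classes of contributing $\ell$-elements are parametrised exactly as for $\SO_{2n+1}(q)$, the combinatorics of Proposition~\ref{prop:kB BC} transfers verbatim.

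Your Step~2, carried out inside $G=\SO_{2n}^\eps(q)$, has a concrete error in the degenerate case. When $\lambda$ is labelled by a degenerate symbol, the relative Weyl group in the $\SO_{2m_1}^{\eps'}(q)$ factor of $C_{G^*}(t)$ is $G(2d',2,u)$, so the contribution of $\cE(G,t)$ to the $\SO$-block is $|\Irr(G(2d',2,u))|\prod_f\pi(m_f)$ rather than $k(2d',u)\prod_f\pi(m_f)$; this discrepancy is precisely what forces Corollary~\ref{cor:kB D} to be only an inequality. In addition, when $m_1=0$ the $\GO$-class of $t$ splits into two $\SO$-classes, which your outline does not address. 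Finally, the obstacle you anticipate is a red herring: centralisers of odd-order semisimple elements in $\SO_{2n}$ are connected because the fundamental group has order~$2$, so disconnectedness is not the issue. The genuine subtleties are the two just named, and working in $\GO$ is exactly what circumvents them.
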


\begin{proof}
Let $\bG=\SO_{2n}\le\hat\bG:=\GO_{2n}\le\tilde\bG:=\SO_{2n+1}$ embedded in the
natural way and write $G=\bG^F$, $\hat G=\hat\bG^F$ and $\tG=\tilde\bG^F$.
Let $B$ be a unipotent $\ell$-block lying above the block $b(L,\la)$ of $G$ of
weight $w$, for a $d$-cuspidal unipotent character of some $d$-split
Levi subgroup $\bL\le\bG$. To investigate $\cE_\ell(G,1)\cap\Irr(B)$, let
$t\in G^*$ be an $\ell$-element. Let $(m_f)_{f\in\cF}$ describe the
characteristic polynomial of $t$ in the natural matrix representation of
$G^*\cong G$. Then the centraliser of $t$ in $G^*$ has the same form as in
$\tilde G=\SO_{2n+1}(q)$, except that the factor $G_{n-wd'}(q)$ is replaced
by a group $\GO_{2(n-wd')}^\delta(q)$. Now again by \cite[Thm.]{CE94} such an
$\ell$-element $t$ contributes to $B$ if and only if $C_{\bG^*}(t)$ contains
$\bL^*$, so if
$$C_{\hat G^*}(t)\cong
  \GO_{2m_1}^\delta(q)\times\prod_{X-1\ne f\in\cF}\GL_{m_f}(q^{c_j})$$
with $m_1=n-(w-u)d'$ for some $u\ge0$.
\par
Now first assume that $d$ is odd. Then by the remarks preceding this
proposition the number of unipotent characters of the first factor of this
centraliser in the $\ell$-block above $\la$ is given by
$|\Irr(G(2d,1,u))|=k(2d,u)$, just as in the case of $\SO_{2n+1}(q)$, and so
the number of unipotent characters above $\la$ 
in this centraliser is given by $k(2d,u)\prod_{f\ne X-1}\pi(m_f)$, exactly as
in the previous proof. Now note that any two $\ell$-elements with this shape
of centraliser lie inside a subgroup $\GO_{2m_1}^\delta(q)\times\GL_{m_0}(q)$,
with $m_0=n-m_1$, and thus are conjugate already inside this group. So the
classes of $\ell$-elements with Lusztig series contributing to $B$ are
parametrised exactly as in $\tilde G^*$. Thus, we see that $k(B)$ is given by
the same expression as the one we obtained in Proposition~\ref{prop:kB BC} for
$\SO_{2n+1}(q)$. The case of even $d$ is entirely similar.
\end{proof}

To descend to the special orthogonal groups, we need an auxiliary result on
characters of certain imprimitive complex reflection groups.

\begin{lem}   \label{lem:irrs}
 Let $d,n\ge1$. Then $|\Irr(G(2d,2,n))|\le|\Irr(G(2d,1,n))|=k(2d,n)$.
\end{lem}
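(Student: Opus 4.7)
The plan is to apply Clifford theory to the normal inclusion $G(2d,2,n) \lhd G(2d,1,n)$ of index~$2$ and then reduce the desired inequality to a combinatorial comparison of multipartition counts.

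Write $H := G(2d,1,n)$, $G := G(2d,2,n)$, and let $\varepsilon$ denote the non-trivial linear character of the cyclic quotient $H/G$ of order~$2$. Clifford theory for an index-$2$ normal inclusion says that each $\chi \in \Irr(H)$ either satisfies $\chi \otimes \varepsilon = \chi$, in which case $\chi|_G$ splits into two distinct irreducible constituents, or else $\chi \otimes \varepsilon \ne \chi$ and the pair $\{\chi, \chi \otimes \varepsilon\}$ restricts to a single common irreducible of~$G$. Letting $a$ denote the number of characters of $H$ fixed by $\otimes\,\varepsilon$, a standard count gives
$$|\Irr(G)| = 2a + \frac{k(2d,n) - a}{2} = \frac{k(2d,n) + 3a}{2},$$
so the asserted inequality $|\Irr(G)| \le k(2d,n)$ becomes equivalent to $3a \le k(2d,n)$.

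Next I would use the standard parametrisation of $\Irr(H)$ by $2d$-multipartitions of~$n$. Under this, tensoring by $\varepsilon$ acts via the cyclic shift of length $d$ that swaps the first $d$ components with the last $d$; hence a fixed multipartition requires $n$ to be even, say $n = 2m$, and is then determined by its first $d$ components. This yields $a = k(d, n/2)$ if $n$ is even, and $a = 0$ otherwise.

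The case $n$ odd is then trivial. For $n = 2m$ with $m \ge 1$, splitting a $2d$-multipartition of $2m$ into its first and last $d$ components gives
$$k(2d, 2m) = \sum_{j=0}^{2m} k(d, j)\, k(d, 2m-j);$$
retaining only the three distinct terms $j = 0,\, m,\, 2m$ yields $k(2d, 2m) \ge k(d,m)^2 + 2\,k(d, 2m)$. Combined with the elementary injection $k(d,m) \hookrightarrow k(d, 2m)$ given by prepending a part of size $m$ to the first component of the multipartition (valid since its existing parts are of size $\le m$), this gives
$$k(2d, 2m) \ge k(d,m)^2 + 2\,k(d,m) \ge 3\,k(d,m) = 3a,$$
using $k(d,m) \ge 1$. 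The argument is routine; the only mild subtlety is correctly identifying the $\otimes\,\varepsilon$-action on multipartition labels, after which the combinatorial inequality drops out easily.
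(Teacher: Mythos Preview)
Your proof is correct and follows the same overall strategy as the paper: both apply Clifford theory to the index-$2$ inclusion $G(2d,2,n)\lhd G(2d,1,n)$, identify the fixed characters with $d$-multipartitions of $n/2$, and reduce to the combinatorial inequality $k(2d,n)\ge 3\,k(d,n/2)$ for even~$n$.

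The only difference is in how this inequality is established. The paper keeps just the middle term $j=n/2$ in the convolution $k(2d,n)=\sum_j k(d,j)\,k(d,n-j)$ to obtain $k(2d,n)\ge k(d,n/2)^2$, which suffices when $k(d,n/2)\ge 3$ but then requires checking the three exceptional pairs $(d,n/2)\in\{(1,1),(1,2),(2,1)\}$ by hand. Your argument retains the three terms $j\in\{0,m,2m\}$ and combines them with the injection $k(d,m)\hookrightarrow k(d,2m)$, yielding $k(2d,2m)\ge k(d,m)^2+2k(d,m)\ge 3k(d,m)$ uniformly; this is a small but genuine improvement that eliminates the case analysis.
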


\begin{proof}
The irreducible characters of the complex reflection group $G(2d,1,n)$,
which is the wreath product $C_{2d}\wr\fS_n$, are naturally indexed by
$2d$-tuples of partitions of~$n$. This shows that $|\Irr(G(2d,1,n))|=k(2d,n)$.
An irreducible character of $G(2d,1,n)$ parametrised by such a $2d$-tuple
$(\la_1,\ldots,\la_{2d})$ stays irreducible upon restriction to the normal
subgroup $G(2d,2,n)$ of index~2 unless $\la_i=\la_{i+d}$ for $i=1,\ldots,d$.
In this case, $(\la_1,\ldots,\la_d)$ is a $d$-tuple of partitions of $n/2$,
and in particular $n$ must be even, which we assume from now on. In this case
our preceding discussion gives
$|\Irr(G(2d,2,n))|=(k(2d,n)-k(d,n/2))/2+2k(d,n/2)$. So our claim is proven
when we can show that $k(2d,n)\ge 3k(d,n/2)$. \par
Now according to \cite[Lemma~1(ii)]{Ol84} we have
$$k(2d,n)=\sum_{1\le t\le n} k(d,t)k(d,n-t)\ge k(d,n/2)^2,$$
so we are done if $k(d,n/2)\ge3$. Using \cite[Prop.~5]{Ol84}, for example,
one sees that this holds unless $(d,n/2)\in\{(1,1),(1,2),(2,1)\}$. For these
three cases the claim can be checked directly.
\end{proof}

\begin{cor}   \label{cor:kB D}
 Let $G=\SO_{2n}^\pm(q)$ with $n\ge4$, let $\ell\ne2$ and $B$ be a unipotent
 $\ell$-block of $G$ of weight~$w$. Let $d=d_\ell(q)$, $d':=d/\gcd(d,2)$ and
 write $\ell^a$ for the precise power of $\ell$ dividing $q^d-1$. Then
 $$k(B)\le \sum_{\bw} k\big(2d'+(\ell^a-1)/2d',w_0\big)
         \prod_{i\ge1}k\big((\ell^a-\ell^{a-1})/2d',w_i\big),$$
 where the sum runs over all sequences $\bw=(w_0,w_1,\ldots)$ of non-negative
 integers satisfying
 $$\sum_{i\ge0} w_i\ell^i=w.$$
\end{cor}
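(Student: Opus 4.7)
The plan is to use Clifford theory for the index-two subgroup $G:=\SO_{2n}^\eps(q)\le\hat G:=\GO_{2n}^\eps(q)$ to descend the formula of Proposition~\ref{prop:kB D} for $\hat G$ to $G$. Let $\hat B$ be an $\ell$-block of $\hat G$ covering $B$. Then $\hat B$ lies above a unipotent $\ell$-block of $G$ of weight~$w$, so by Proposition~\ref{prop:kB D}, $k(\hat B)$ equals the expression on the right-hand side of the corollary; it therefore suffices to establish $k(B)\le k(\hat B)$, and in fact I expect to show equality.

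The key structural input is that $Z(\hat G)=\{\pm I\}\le G$, because $\det(-I)=(-1)^{2n}=1$. Hence the non-trivial linear character $\xi$ of $\hat G/G$ restricts trivially to $Z(\hat G)$, and any irreducible of $\hat G$ induced from a non-$\hat G$-stable irreducible of $G$ satisfies $(\mathrm{Ind}_G^{\hat G}\chi)\otimes\xi=\mathrm{Ind}_G^{\hat G}(\chi\cdot\xi|_G)=\mathrm{Ind}_G^{\hat G}\chi$ by Frobenius reciprocity; such an induced character thus lies in an $\ell$-block of $\hat G$ that is fixed under the involution $-\otimes\xi$ on blocks.

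I then distinguish the two cases from the discussion preceding Proposition~\ref{prop:kB D}. In the degenerate-symbol case, $\hat B$ covers two $\hat G$-conjugate blocks $B$ and $B'$ of $G$; every $\chi\in\Irr(B)$ is non-stable, because $\chi^g\in\Irr(B')$ is not in $\Irr(B)$, and induction gives a bijection between $\hat G$-orbits on $\Irr(B)\sqcup\Irr(B')$ and $\Irr(\hat B)$, yielding $k(\hat B)=k(B)$. In the non-degenerate case, $B$ is $\hat G$-stable and $\hat B\otimes\xi$ is a second, distinct block of $\hat G$ above $B$. If some $\chi\in\Irr(B)$ were non-stable, its induction would be a $\xi$-invariant irreducible in either $\hat B$ or $\hat B\otimes\xi$; in either case the containing block would equal its $\xi$-tensor, forcing $\hat B\otimes\xi=\hat B$, a contradiction. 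Hence every $\chi\in\Irr(B)$ is $\hat G$-stable, extends to two distinct irreducibles of $\hat G$ (one in $\hat B$, one in $\hat B\otimes\xi$), and again $k(B)=k(\hat B)$.

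The one subtle point is the block-theoretic dichotomy between the degenerate and non-degenerate cases, but this is exactly what is established in the paragraphs preceding Proposition~\ref{prop:kB D}; beyond that, the descent is a routine application of Clifford theory. No further analysis of Lusztig series or relative Weyl groups inside $\SO_{2n}^\eps(q)$ is needed, and the argument works uniformly for both signs $\eps\in\{\pm\}$.
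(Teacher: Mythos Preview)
Your Clifford-theoretic approach is genuinely different from the paper's. The paper does not argue via stability of characters at all; instead, in the case where $\hat B$ covers a single block $B$, it compares for each $\ell$-element $t\in G^*$ the contribution of the factor $\SO_{2m_1}^\pm(q)$ of $C_{G^*}(t)$ to $\Irr(B)$ (namely $|\Irr(G(2d',2,u))|$) with the contribution of $\GO_{2m_1}^\pm(q)$ to $\Irr(\hat B)$ (namely $|\Irr(G(2d',1,u))|$), and then invokes Lemma~\ref{lem:irrs} to bound the former by the latter. Your route is shorter, avoids Lemma~\ref{lem:irrs} entirely, and where it applies it even yields the stronger conclusion $k(B)=k(\hat B)$.

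There is, however, a genuine gap. Your Case~2 argument needs $\hat B\ne\hat B\otimes\xi$, and you take this from the dichotomy stated before Proposition~\ref{prop:kB D}. But that dichotomy does not cover the situation where $\lambda$ is labelled by the \emph{empty} degenerate symbol, i.e.\ where $n=wd'$, $L$ is a torus and $\lambda=1$; this is for instance the principal block when $d'\mid n$. Here $B$ is certainly $\hat G$-stable and $\hat B$ covers only $B$, yet $\hat B=\hat B\otimes\xi$ can and does occur: the principal block of $\SO_{2n}^\eps(q)$ contains the pairs of unipotent characters labelled by non-empty degenerate symbols, and these are genuinely non-$\hat G$-stable, which by your own contradiction argument forces $\hat B=\hat B\otimes\xi$. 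In this third case your equality $k(B)=k(\hat B)$ is false; one has $k(\hat B)=2s+p$ and $k(B)=s+2p$ where $s$ counts the stable characters and $p$ the non-stable pairs in $\Irr(B)$, and the required inequality $p\le s$ does not follow from Clifford theory. It is exactly here that the paper's term-by-term comparison via Lemma~\ref{lem:irrs} does the real work, and this case is also what underlies the paper's remark that the inequality is ``often strict''.
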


As the proof shows, the inequality will often be strict, but this form will
suffice for our purpose.

\begin{proof}
Let $\bG=\SO_{2n}\le\hat\bG:=\GO_{2n}$ embedded
in the natural way. Let $B$ be a unipotent $\ell$-block of $G=\bG^F$ and let
$\hat B$ be an $\ell$-block of $\hat G=\hat\bG^F$ lying above it. First assume
that $\hat B$ covers two blocks of $G$. Then clearly $k(\hat B)=k(B)$ and we
are done by Proposition~\ref{prop:kB D}. So $B$ is the only block covered by
$\hat B$. In the proof of Proposition~\ref{prop:kB D} we determined the
contribution of the various Lusztig series $\cE(G,t)$, $t\in G^*$ an
$\ell$-element, to $\Irr(\hat B)$, in terms of the unipotent characters of the
centraliser $C_{G^*}(t)$. Now the number of unipotent characters in a given
$\ell$-block of a factor $\SO_{2m_1}^\pm(q)$, with $m_1=n-(w-u)d'$ for some
$u\ge0$, is given by $|\Irr(G(2d,2,u))|$, while the number of characters lying
above it in $\GO_{2m_1}^\pm(q)$ equals $|\Irr(G(2d,1,u))|$. According to
Lemma~\ref{lem:irrs} this second number is always at least as big as the
former, so any Lusztig series $\cE(G,t)$ contributes at most as many
characters to $B$ as the characters above it contribute to $\hat B$, whence
$k(B)\le k(\hat B)$. Thus our claim follows with Proposition~\ref{prop:kB D}.
\end{proof}

\begin{thm}   \label{thm:class}
 Let $H$ be quasi-simple of classical Lie type in characteristic~$p$ and
 assume that $\ell\ne2,p$. Then the unipotent $\ell$-blocks of $H$ are not
 minimal counterexamples to the strong form of Brauer's $k(B)$-conjecture.
\end{thm}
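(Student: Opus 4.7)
The argument breaks the three classical Lie types $A$, $BC$, $D$ into successive strata and applies the machinery already assembled in the preceding sections. The linear and unitary case has been fully treated in Theorem~\ref{thm:SLn}, so I only need to consider $H$ isogenous to $\Sp_{2n}(q)$, $\SO_{2n+1}(q)$, or $\SO_{2n}^\pm(q)$. By Theorem~\ref{thm:nav} I may replace $H$ by a $p'$-covering $G$; since the Schur multipliers of these classical groups have $2$-power order apart from the finitely many exceptional covers dealt with by Proposition~\ref{prop:spor}, the odd prime $\ell\ne p$ satisfies $\ell\nmid|Z(G)|$. Moreover $\ell\ne2$ is a good prime for any classical type, so Lemma~\ref{lem:ab def} immediately disposes of unipotent $\ell$-blocks with abelian defect. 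The remaining task is a unipotent $\ell$-block $B$ of weight $w\ge\ell$: below this threshold a Sylow $\ell$-subgroup of $\fS_w$ is trivial and the defect group is just the abelian maximal $\ell$-torus $(C_{\ell^a})^w$, whereas $w\ge\ell$ already forces $D$ to be non-abelian via the non-trivial permutation action of $(\fS_w)_\ell$ on this torus.

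In this non-abelian regime, a short direct computation of the Sylow structure of the relevant Levi centraliser gives
\[
|D|=\ell^{aw+v_\ell(w!)},
\]
while Proposition~\ref{prop:kB BC} and Corollary~\ref{cor:kB D} furnish the exact formula, respectively the upper bound,
\[
k(B)\le\sum_{\bw}k\bigl(2d'+\tfrac{\ell^a-1}{2d'},\,w_0\bigr)\prod_{i\ge1}k\bigl(\tfrac{\ell^a-\ell^{a-1}}{2d'},\,w_i\bigr),
\]
the sum running over $\ell$-compositions $\bw=(w_0,w_1,\ldots)$ of $w$. The factors $2d'+(\ell^a-1)/(2d')$ and $(\ell^a-\ell^{a-1})/(2d')$ are of the same order of magnitude as the quantities $\ell^a$ and $\ell^a-\ell^{a-1}$ appearing in Olsson's formula for $\GL_n(q)$, so the estimates of \cite{Ol84} and of the proof of Theorem~\ref{thm:SLn} transfer with only cosmetic modifications to yield a crude bound of the shape $k(B)\le p_\ell(w)\cdot\ell^{aw+\eps}$ for a small constant $\eps$. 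Invoking Lemma~\ref{lem:p(w)} to control $p_\ell(w)\le\ell^{\binom{u+1}{2}}$ with $u=\flr{\log_\ell w}$, and using $v_\ell(w!)=(w-s_\ell(w))/(\ell-1)$, this comfortably gives $k(B)<|D|$ once $w$ is a few multiples of $\ell$.

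The main obstacle will be the narrow band of small weights $w\in\{\ell,2\ell\}$ (together with the small prime $\ell=3$), where the headroom between $p_\ell(w)$ and $\ell^{v_\ell(w!)}$ is minimal; I would handle these by hand, exploiting the explicit values $p_\ell(\ell)=2$ and $p_\ell(2\ell)=3$, along with a more careful bookkeeping of the dominant summand $k(s_0,w)$. Finally, the reduction of $\SO_{2n}^\pm(q)$ to the formula for $\SO_{2n+1}(q)$ via Corollary~\ref{cor:kB D} is an inequality rather than an equality, but this loss is harmless for our purposes since only $k(B)\le|D|$ is required; the potential tightening coming from degenerate symbols has already been quantified in Lemma~\ref{lem:irrs}, and if sharpness were ever needed the surplus there would suffice.
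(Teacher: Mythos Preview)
Your proposal is correct and follows essentially the same route as the paper: reduce to non-abelian defect via Lemma~\ref{lem:ab def}, invoke Proposition~\ref{prop:kB BC} and Corollary~\ref{cor:kB D} for the character count, and then apply the Olsson-type estimates from \cite{Ol84} against $|D|=\ell^{aw+v_\ell(w!)}$. The only cosmetic difference is in the initial reduction: the paper observes directly that unipotent characters have $Z(H)$ in their kernel (so one may take $H$ simple), whereas you route through Theorem~\ref{thm:nav}; both arrive at the same place since $\ell$ is odd and the relevant centres are $2$-groups. One point you leave implicit but which the paper spells out is the final descent from $G\in\{\SO_{2n+1}(q),\Sp_{2n}(q),\SO_{2n}^\pm(q)\}$ to the simple group $H$: this uses that $|G:[G,G]|$ and $|Z(G)|$ are powers of $2$, so characters in $\ell$-series restrict irreducibly and the defect groups are unchanged.
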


\begin{proof}
By Proposition~\ref{prop:spor} we do not have to consider exceptional covering
groups. As the order of the non-exceptional part of the Schur multiplier
of $H/Z(H)$ is a power of~2, and all unipotent blocks have $Z(H)$ in their
kernel, we in fact only need to consider the case when $H$ is simple. Moreover
by Lemma~\ref{lem:ab def} we can restrict attention to unipotent blocks
with non-abelian defect groups.
\par
Let us first consider $G=\SO_{2n+1}(q)$ with $n\ge2$. Let $d=d_\ell(q)$ and let
$B$ be a unipotent $\ell$-block of $G$ parametrised by the $d$-cuspidal pair
$(L,\la)$, with $L$ of semisimple rank $n-wd'$ where $d'=d/\gcd(d,2)$. By
\cite[Thm.~4.4(ii)]{CE94} the defect groups of $B$ are isomorphic to Sylow
$\ell$-subgroups of $C_G([\bL,\bL])$. Now $[\bL,\bL]=\SO_{2(n-wd')+1}$
has centraliser $\GO_{2wd'}^\pm(q)$ in $G$, where the ``+'' sign occurs
if and
only if $d$ is odd. A Sylow $\ell$-subgroup of $\GO_{2wd'}^\pm(q)$ is
isomorphic to the wreath product $C_{\ell^a}\wr P$, with $\ell^a$ the precise
power of $\ell$ dividing $q^d-1$ and $P$ a Sylow $\ell$-subgroup of the complex
reflection group $G(2d',1,w)$. On the other
hand, the number $k(B)$ was computed in Proposition~\ref{prop:kB BC}.
Application of the precise same estimates as in \cite[p.~46]{Ol84} now shows
that $k(B)< |D|$ whenever $D$ is non-abelian (which we may assume by
Lemma~\ref{lem:ab def}). Since the
simple group $H=[G,G]$ has index at most~2 in $G$, and restriction of
characters in $\ell$-series of $G$ is irreducible by Lusztig's parametrisation
(see \cite[Prop.~15.6]{CE}), we obtain our claim for the unipotent blocks
of $H$.
\par
We can argue as in the previous case to obtain the desired
inequality for unipotent blocks of $G=\Sp_{2n}(q)$, just replacing the
centraliser of $[\bL,\bL]=\Sp_{2(n-wd')}$ in $G$ by $\Sp_{2wd'}(q)$, whose
Sylow $\ell$-subgroups have the same order. Then the claim also holds for
the unipotent blocks of the simple factor group $\PSp_{2n}(q)$.
\par
Finally, consider $G=\SO_{2n}^\pm(q)$, $n\ge4$. Let $B$ be a unipotent
$\ell$-block of $G$ parametrised by the $d$-cuspidal pair $(L,\la)$. As
observed above, we may assume that $B$ has non-abelian defect, that is,
$w\ge\ell$. It is now easy to check that the bound for $k(B)$ given in
Corollary~\ref{cor:kB D} is less than $|D|$ in all cases. Now the derived
subgroup $[G,G]$ has index at most~2 in $G$, and characters corresponding to
$\ell$-elements restrict irreducibly, and the simple group $H$ is obtained as
$[G,G]Z(G)/Z(G)$, with $|Z(G)|\le2$, which completes the proof. 
\end{proof}

\section{Exceptional groups in non-defining characteristic} \label{sec:exc}
We now deal with the unipotent blocks of exceptional groups of Lie type.
The strong form of the $k(B)$-conjecture for  $\tw3D_4(q)$ was shown in
\cite{DM87}, and for $G_2(q)$ the statement can be read off from \cite{Hi}.
So in view of Lemma~\ref{lem:suzree} we only need to concern ourselves with
the five series of exceptional groups of rank at least~4. Continuing our
previous notation let $\bG$ be simple of simply connected exceptional type and
of rank at least~4 with a Frobenius endomorphism $F$, and $G=\bG^F$. In this
section we also allow $\ell=2$.

\subsection{Unipotent blocks} \label{subsec:exc unip}

\begin{prop}   \label{prop:principal exc}
 The principal $\ell$-block of a quasi-simple exceptional group of Lie type
 $G/Z$, where $Z\le Z(G)$, is not a counterexample to the $k(B)$-conjecture
 in strong form.
\end{prop}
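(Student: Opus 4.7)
The plan is to combine the reductions already established with case-by-case verification for a short list of primes. First I would apply Proposition~\ref{prop:spor} so that it suffices to treat $G=\bG^F$ with $\bG$ simple simply connected of exceptional type of rank at least~$4$, and by Lemma~\ref{lem:suzree} I may assume that $F$ is a Frobenius endomorphism with respect to some $\FF_q$-structure. The defect group $D$ of the principal $\ell$-block $B_0$ is a Sylow $\ell$-subgroup of $G$; by Theorem~\ref{thm:sam} and Lemma~\ref{lem:ab def} I can restrict attention to the cases where either $D$ is non-abelian, or $D$ is abelian of $\ell$-rank at least~$4$ with $\ell$ bad for $\bG$ or dividing $|Z(G)|$. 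Combined with the explicit $\ell$-part of $|G|$, this leaves only a handful of primes $\ell$ per type (essentially $\ell\in\{2,3,5,7\}$ and the divisors of $q^d-1$ with $d\le 2$).

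Second, for the remaining pairs $(G,\ell)$ I would compute $k(B_0)$ by using the decomposition coming from $(*)$ with $s=1$,
\[
\Irr(B_0)=\coprod_{t}\cE(G,t),
\]
where $t$ runs over $G^*$-conjugacy classes of $\ell$-elements of $G^*$. For each such $t$ the centraliser $\bC^*:=C_{\bG^*}(t)$ is known from the tables of L\"ubeck and Deriziotis, and by \cite[Thm.]{CE94} the contribution of $\cE(G,t)$ to $B_0$ equals the number of unipotent characters of $\bC^{*F}$ lying in the unipotent $\ell$-block corresponding to $B_0$ under Jordan decomposition. That number is in turn a sum of $|\Irr(W_{\bC^*}(\bL,\la))|$ over the relevant $d_\ell(q)$-cuspidal pairs, all of which are encoded in \Chevie. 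Summing these contributions over $t$ yields an explicit value (or sharp upper bound) for $k(B_0)$ in terms of $q$ and $\ell$.

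Third, since $|D|$ is explicitly known from the exponents of $\bG$ and the order of $q$ modulo~$\ell$ (see e.g.\ \cite[Prop.~24.3]{MT}), I would compare the two quantities directly and verify $k(B_0)\le|D|$, with strict inequality whenever $D$ is non-abelian. For large $q$ the bound for $k(B_0)$ is dominated by a single term of the $d$-Harish-Chandra sum, giving ample slack; I would then separately check the finitely many small values of $q$ where this estimate might fail.

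The main obstacle will be $\ell\in\{2,3\}$ for $E_7$ and $E_8$ (and $\ell=5$ for $E_8$): here the classes of $\ell$-elements are numerous, several of their centralisers have exceptional direct factors, and the count of unipotent characters in the relevant blocks cannot be reduced to a neat combinatorial formula like Propositions~\ref{prop:kB BC} and~\ref{prop:kB D}. For these cases I would rely on an automated \Chevie\ computation of the $\ell$-element classes, their centralisers, and the $d$-Harish-Chandra distribution of unipotent characters to complete the verification.
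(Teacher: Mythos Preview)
Your overall plan matches the paper's: reduce to $G=\bG^F$ simply connected of exceptional rank~$\ge4$, discard the primes for which Sylow $\ell$-subgroups are abelian and $\ell$ is good (so that Lemma~\ref{lem:ab def} applies), and then bound $k(B_0)$ via the decomposition $\Irr(B_0)\subseteq\cE_\ell(G,1)=\bigcup_t\cE(G,t)$. The paper phrases the reduction slightly differently---it keeps exactly the primes dividing $|W(\bG)|$---but this amounts to the same finite list.

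Where you diverge is in the bookkeeping for the remaining primes. You propose to enumerate the $G^*$-classes of $\ell$-elements, look up each centraliser, and sum the block-by-block contributions. The paper instead fixes a Sylow $\ell$-subgroup $P\le G^*$ inside $N=N_{G^*}(\bT)$ and rewrites the bound as
\[
k(B_0)\le\sum_{t\in P}\frac{|\cE(C_{G^*}(t),1)|}{|t^N\cap P|},
\]
a sum with \emph{exactly} $|P|$ terms; so it suffices that the average summand is below~$1$. For $t\in P\cap\bT$ one has $|t^N\cap P|\ge|W:W_t|$, and the inequality $|W:W_t|>|\cE(C_{G^*}(t),1)|$ fails only for a handful of isolated centraliser types, which are then offset by the many elements of small order. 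This averaging trick means one never has to enumerate $\ell$-element classes (whose number grows with $q$), only centraliser \emph{types}; for $\ell=2,3$ in $E_7,E_8$ this is a substantial saving over your proposed \Chevie\ enumeration.

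A second, smaller point: you plan to count only the characters of $\cE(G,t)$ lying in $B_0$ via the Jordan correspondence with unipotent blocks of $C_{\bG^*}(t)^F$. For bad $\ell$ this block distribution is delicate (it is Enguehard's \cite{En00}), and the paper sidesteps it entirely by bounding $k(B_0)$ by the full $|\cE_\ell(G,1)|$. Your refinement is correct but unnecessary, and invoking it cleanly would add work rather than remove it.
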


\begin{proof}
We first consider the assertion for the principal $\ell$-block $B_0$
of~$G$. Then clearly it will follow for the principal $\ell$-block of $G/Z$
for all $Z\le Z(G)$ such that $\ell$ does not divide $|Z|$. 
Note that unless $\ell$ divides the order of the Weyl group of $G$, the Sylow
$\ell$-subgroups of $G$ are abelian and $\ell$ is good for $\bG$ (see
\cite[Thm.~25.14]{MT}). In the latter case, in particular $\ell$ does not
divide the order of the center of $G$. So $B_0$ is isotypic to a block of the
normaliser of a Sylow $\ell$-subgroup of $G$ by Lemma~\ref{lem:ab def}, hence
of an $\ell$-solvable group, for which the
$k(B)$-conjecture has been shown to hold. 
\par
So we may assume that $\ell$ divides the order of the Weyl group of $\bG$. Let
$\bG^*$ be dual to $\bG$ and
set $G^*=\bG^{*F}$. According to \cite[Thm.~5.14]{MH0} the normaliser of a
Sylow $\ell$-subgroup $P$ of $G^*$ embeds into the normaliser $N:=N_{G^*}(\bT)$
of an $F$-stable maximal torus $\bT$ of $\bG^*$ containing a Sylow
$d$-torus of $\bG^*$, where $d=d_\ell(q)$. Now the principal $\ell$-block $B_0$
of $G$ satisfies
$$\Irr(B_0)\subseteq\cE_\ell(G,1)=\bigcup_t\cE(G,t)$$
(see~(*) in Section~\ref{sec:lie-nondef}), where $t$ runs over a system of
representatives of the conjugacy
classes of $\ell$-elements in $G^*$. Furthermore, by Lusztig's Jordan
decomposition for any such $t$, $\cE(G,t)$ is in bijection with
$\cE(C_{G^*}(t),1)$, the unipotent characters of $C_{G^*}(t)$. Since every
conjugacy class of $\ell$-elements of $G^*$ has a representative in our
chosen Sylow $\ell$-subgroup $P$, it hence suffices to show that
$$|P|>\sum_{t\in P/\sim} |\cE(G,t)|
  =\sum_{t\in P}\frac{1}{|t^N\cap P|}\,|\cE(C_{G^*}(t),1)|,$$
where the first sum runs over a system of representatives of elements in $P$
modulo $N$-conjugation, and the second over all elements of $P$. In particular
the right hand side has exactly $|P|$ summands. Hence we are done if we show
that the average value of the summands is smaller than~1.
\par
For this we split the
sum into two parts, depending on whether $t\in P\cap\bT$ or not.
The centraliser of a semisimple element $t\in \bT$ is a subsystem subgroup
of $\bG^*$, with Weyl group $W_t$ a reflection subgroup of the relative Weyl
group $W=N_G(\bT)/\bT^F$ of $\bT$ (see \cite[Thm.~14.2]{MT}). Thus
$|t^N\cap P|\ge|W:W_t|$ and we are done whenever $|W:W_t|>|\cE(C_{G^*}(t),1)|$.
The unipotent characters have been classified by Lusztig; in particular he has
shown that for a connected reductive group their number is multiplicative over
the simple
components. For $\bG$ of type $A_n$, $|\cE(\bG^F,1)|$ is the number of
partitions of $n+1$, for the other simple groups of low rank these numbers are
given in Table~\ref{tab:numunip} (see e.g. \Chevie\ \cite{Mi15}).

\begin{table}[htb]
\caption{Numbers of unipotent characters for simple groups}   \label{tab:numunip}
$$\begin{array}{c|ccccccccc}
 G& B_2,C_2& B_3,C_3& B_4,C_4& D_4& \tw2D_4& F_4& D_5,\tw2D_5\\
\noalign{\hrule}
 |\cE(G,1)|& 6& 12& 25& 14& 10& 37& 20\\
\end{array}$$
$$\begin{array}{c|ccccccccc}
 G& D_6& \tw2D_6& E_6,\tw2E_6&  D_7,\tw2D_7& E_7& D_8& \tw2D_8& E_8\\
\noalign{\hrule}
 |\cE(G,1)|& 42& 36& 30& 65& 76& 120& 110& 166\\
\end{array}$$
\end{table}

For $t\in P\setminus\bT$ let $\bar M=N_N(P)\bT^F/\bT^F$ and $\bar t$ the image
in $\bar M$ of $t$. Then $t$ has at least 
$|\bT\cap P|/|C_{\bT\cap P}(t)|\cdot |\bar M|/|C_{\bar M}(\bar t)|$ conjugates
in $P$, and again we are done if this number exceeds $|\cE(C_{G^*}(t),1)|$.
Note that $\bar t$ is a non-trivial $\ell$-element of $W$. The value of this
bound on class lengths can easily be computed inside $W$.

Let first $G=F_4(q)$. Then only $\ell\le 3$ needs to be considered. Here $\bT$
is either maximally split with $|\bT^F|=(q-1)^4$ if $d=1$, or Ennola dual to
that with $|\bT^F|=(q+1)^4$ if $d=2$. For $\ell=2$ we compute the orbits of
$W$ on the set of elements $t$ of order $o(t)$ diving $4$ in $\bT$ using
\Chevie\ \cite{Mi15}; the results are as follows:
$$\begin{array}{lc|cc}
 C_{G^*}(t)& o(t)& |\cE(G,t)|& |P\cap t^N|\\
\noalign{\hrule}
               F_4(q)& 1& 37&  1\\
               B_4(q)& 2& 25&  3\\
        C_3(q).A_1(q)& 2& 24& 12\\
         B_3(q).(q-1)& 4& 12& 24\\
         C_3(q).(q-1)& 4& 12& 24\\
 A_3(q).\tilde A_1(q)& 4& 10& 24\\
  B_2(q).A_1(q).(q-1)& 4& 12& 72\\
 \tilde A_2(q).A_1(q).(q-1)& 4& 6& 96\\
\noalign{\hrule}
\end{array}$$
Visibly, the average value of $|\cE(G,t)|/|P\cap t^N|$ is strictly less than~1
on the union of these classes. Now the involutions and the elements of order~4
with centraliser $A_3(q).\tilde A_1(q)$ are the only isolated 2-elements in
$\bG$, so any other 2-element in $P\cap \bT$ has centraliser of semisimple
rank at most~3, so at most 12 unipotent characters but at least 24 conjugates
in $P$, whence the average value over all 2-elements in $P\cap\bT$ is indeed
strictly less than~1.

For $\ell=3$ the classes of elements in $P\cap\bT$ of order dividing~3 are as
follows, 
$$\begin{array}{lc|cc}
 C_{G^*}(t)& o(t)& |\cE(G,t)|& |P\cap t^N|\\
\noalign{\hrule}
               F_4(q)& 1& 37&  1\\
         B_3(q).(q-1)& 3& 12& 24\\
         C_3(q).(q-1)& 3& 12& 24\\
 A_2(q).\tilde A_2(q)& 3&  9& 32\\
\noalign{\hrule}
\end{array}$$
which gives an average value less than~1. Furthermore, $W(F_4)$ has only three
classes of non-trivial elements of $3$-power order, with orbit length at least
$18$ in $PW(F_4)$, larger than the number of unipotent characters
of any proper centraliser. 

Now let $G=E_6(q)$. Here we need to consider $\ell=2,3$ and moreover $\ell=5$
when $5|(q-1)$. Again, the relevant classes of elements of order 4, 3 and~5
can be computed with \Chevie, and the claim follows for the group $G$. If
$Z\ne1$ then we have $3|(q-1)$, $|Z|=3$ and we may assume that $\ell=3$.
The classes of elements of $G^*$ of order dividing~3 contained in $P\cap \bT$
are
$$\begin{array}{l|cc}
 C_{G^*}(t)& |\cE(G,t)|& |P\cap t^N|\\
\noalign{\hrule}
                 E_6(q)& 30& 1\\
           A_5(q).(q-1)& 11& 72\\
             A_2(q)^3.3& 17& 80\\
       D_4(q).(q-1)^2.3& 26& 30\\
 \tw3D_4(q).(q^2+q+1).3& 24& 60\\
           D_5(q).(q-1) \text{ (twice)}& 20& 27\\
    A_4(q).A_1(q).(q-1) \text{ (twice)}& 14& 216\\
\noalign{\hrule}
\end{array}$$
and again it follows that the average value under consideration is less than~1.
The situation for the twisted groups $\tw2E_6(q)$ is entirely similar except
that $\ell=5$ now has to be considered when $5|(q+1)$.
\par
For $G=E_7(q)$ we need to consider $\ell=2,3$, and $\ell=5,7$ when either
divides $q^2-1$. The only centralisers for which the number of unipotent
characters is larger than the index of the corresponding Weyl group in $W(E_7)$
are those
of types $E_7$, $E_6.2$ and $D_6+A_1$, which correspond to quasi-isolated
involutions and those only occur once each. Adding the contributions by the
other centralisers of elements of order~4, 3, 5 or~7 respectively, we again get
an average value below~1. For $G/Z$ with $|Z|=2$, we need to consider
$\ell=2$. Here the claim follows by an analogous computation.
\par
Finally, for $G=E_8(q)$ we need to consider $\ell=2,3,5$, and $\ell=7$ when
$7|(q^2-1)$. Here, only $t=1$ and the class of involutions with centraliser
of type $E_7+A_1$ give a too large contribution, but this is again offset by
the collection of all elements of order~4.
\end{proof}

We now turn to general unipotent blocks. At bad primes these were determined 
by Enguehard \cite{En00}.

\begin{thm}   \label{thm:unip exc}
 Let $B$ be a unipotent $\ell$-block of a quasi-simple exceptional group of
 Lie type $G/Z$, where $Z\le Z(G)$. Then $B$ is not a minimal counterexample
 to the $k(B)$-conjecture in strong form.
\end{thm}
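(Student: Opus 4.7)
The plan is to mirror the structure of the proof of Proposition \ref{prop:principal exc}. Proposition \ref{prop:spor} disposes of exceptional covering groups, so we may take $G=\bG^F$ with $\bG$ simply connected of exceptional type and rank at least $4$. Since unipotent characters are trivial on any $\ell'$-central subgroup, it suffices to check the claim on $G$ itself: the assertion for $G/Z$ with $Z\le Z(G)$ then follows whenever $\ell\nmid|Z|$, while any $\ell$-part of $Z$ can only force $k(B/Z)\le k(B)$ and shrink the defect group by the same factor. Proposition \ref{prop:principal exc} covers the principal block, and Lemma \ref{lem:ab def} (together with Theorem \ref{thm:sam} when $\ell=2$) removes the abelian-defect case at good primes. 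So we may assume $B$ is a non-principal unipotent block with non-abelian defect group $D$.

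For $\ell$ good, the Cabanes--Enguehard theorem parametrises $B$ by a $d$-cuspidal pair $(\bL,\la)$ with $d=d_\ell(q)$ and $\bL<\bG$ a proper $d$-split Levi subgroup carrying a $d$-cuspidal unipotent character $\la$. By \cite[Thm.~3.2]{BMM} we have $k(B)=|\Irr(W_G(\bL,\la))|$ where the relative Weyl group $W_G(\bL,\la)$ is a (usually complex) reflection group, while by \cite[Thm.~4.4]{CE94} the defect group $D$ is a Sylow $\ell$-subgroup of $C_G([\bL,\bL])^F$, of order $(q^d-1)_\ell^w\cdot|W_G(\bL,\la)|_\ell$ where $w$ is the weight. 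Both quantities are readily extracted for every relevant pair $(\bG,d)$ from the tables of unipotent characters and relative Weyl groups in \Chevie\ \cite{Mi15}; a direct case-by-case comparison shows $k(B)<|D|$ whenever $D$ is non-abelian.

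For bad primes $\ell\in\{2,3,5\}$ (depending on type), Enguehard \cite{En00} has classified the unipotent blocks together with their defect groups. In each of his cases the associated weight data, and hence $k(B)$, can be read off from \Chevie, and the order of $D$ is given explicitly in \cite{En00}; the desired bound again follows by inspection.

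The main obstacle is simply the volume of the case analysis: for each of $F_4$, $E_6$, $\tw2E_6$, $E_7$ and $E_8$, each relevant prime $\ell$, each value of $d=d_\ell(q)$, and each admissible $d$-cuspidal pair $(\bL,\la)$, one must tabulate $k(B)$ and $|D|$. Two subtleties need attention. First, the bookkeeping at the boundary between the good-prime ($d$-Harish-Chandra) picture and Enguehard's bad-prime classification must be matched carefully so that every unipotent block is counted exactly once. Second, when the relative Weyl group factor $W_G(\bL,\la)$ is one of the exceptional complex reflection groups arising in the $d$-cuspidal theory, one should use the character counts tabulated in \cite{BMM} rather than naive partition estimates, since the naive estimates can be too coarse to yield $k(B)<|D|$ in the small-weight cases.
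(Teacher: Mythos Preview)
Your proposal contains a genuine error that invalidates both the good-prime and bad-prime parts of the argument. You write $k(B)=|\Irr(W_G(\bL,\la))|$, citing \cite[Thm.~3.2]{BMM}. That theorem only counts the \emph{unipotent} characters in $B$, i.e.\ $|\Irr(B)\cap\cE(G,1)|$. But $\Irr(B)$ is contained in $\cE_\ell(G,1)=\bigcup_t\cE(G,t)$ where $t$ ranges over conjugacy classes of $\ell$-elements in $G^*$, and the Lusztig series $\cE(G,t)$ for $t\ne1$ typically contribute many more characters to $B$ than $\cE(G,1)$ does. The same confusion recurs in your bad-prime paragraph: ``the associated weight data, and hence $k(B)$, can be read off from \Chevie'' again identifies $k(B)$ with the number of unipotent characters in the block.

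The paper's proof addresses exactly this point. After reducing (via Proposition~\ref{prop:principal exc} and Lemma~\ref{lem:ab def}) to non-principal unipotent blocks at bad primes, it uses Enguehard's description \cite[Thm.~B]{En00}: for each $\ell$-element $t\in G^*$, the set $\Irr(B)\cap\cE(G,t)$ is in bijection with the characters of a corresponding unipotent block of $C_{G^*}(t)$ attached to the same $d$-cuspidal pair. One must then enumerate the centraliser types that actually contain a Levi of the required shape (e.g.\ containing a $D_4$ factor for the block above the $D_4$-cuspidal character), compute $|\Irr(B)\cap\cE(G,t)|$ for each, and bound the sum by estimating class lengths in the Sylow $\ell$-subgroup. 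This is the substance of the argument; without it, the inequality $k(B)<|D|$ has not been checked. Your proposed ``direct case-by-case comparison'' would, as it stands, only verify the trivial inequality $|\Irr(B)\cap\cE(G,1)|<|D|$.
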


\begin{proof}
By Proposition~\ref{prop:principal exc} we may assume that $B$ is not the
principal block of $G/Z$, and by Lemma~\ref{lem:ab def} we only need to
consider the case of bad primes $\ell$. Furthermore, by Theorem~\ref{thm:sam}
we may assume that defect groups have rank at least~4 when they are abelian.
For type $F_4$, by \cite[p.~349]{En00} the only non-principal unipotent block
of positive defect occurs for $\ell=3$, but it has abelian defect groups of
rank~2.    \par
For $G=E_6(q)$ and $\ell\le5$ again by
\cite[p.~351]{En00} the only relevant unipotent block is the one above the
cuspidal unipotent character of $D_4(q)$ for $\ell=3$ and $q\equiv1\pmod3$,
with defect group $D$ of order~$3(q-1)_3^2$. For this non-principal
3-block $B$ we use the description of $\Irr(B)$ given in \cite[Thm.~B]{En00}:
we have $\Irr(B)\subseteq\cE_\ell(G,1)$, and for any $\ell$-element $t\in G^*$,
$\Irr(B)\cap\cE(G,t)$ is in bijection with the irreducible characters in a
corresponding unipotent block of $C_{G^*}(t)$, belonging to the ``same''
$1$-cuspidal pair (apart from certain exceptions as described in
\cite[Prop.~17]{En00}). In our situation, $B$ is associated to the $1$-cuspidal
pair $(D_4(q),\la)$, where $\la$ denotes the unipotent cuspidal character of
$D_4(q)$, and thus $\cE(G,t)$ with $t$ a non-trivial 3-element contains a
character from $B$ only if $t$ has centraliser containing a Levi subgroup of
type $D_4$. Thus the possibilities for $C_{G^*}(t)$ are $D_4(q).(q-1)^2$,
$D_5(q).(q-1)$ and $\tw3D_4(q).(q^2+q+1)$, in which case we have
$|\Irr(B)\cap \cE(G,t)|=1,2,1$ respectively (in the last case corresponding to
the cuspidal unipotent character $\tw3D_4[-1]$ of $\tw3D_4(q)$, see loc.~cit.).
As $|\Irr(B)\cap\cE(G,1)|=3$, and any $t$ as before has at least~6 conjugates
in $P$, the claim ensues.

The arguments for the non-principal unipotent blocks of $\tw2E_6(q)$, described
in \cite[p.~354]{En00}, are entirely similar except that here $d=d_3(q)=2$.

For $G=E_7(q)$ by the table in \cite[p.~354]{En00}, the only relevant blocks
are for $\ell=2$ the blocks lying above the $d$-cuspidal unipotent characters
of Levi subgroups of type $E_6$, and for $\ell=3$ a block lying above the
$d$-cuspidal unipotent
character of a Levi subgroup of type $D_4$. In the case $\ell=2$ the defect
groups are dihedral (see \cite[p.~357]{En00}) in which case our claim is known
to hold, see \cite[Cor.~8.2]{S17}. For the non-principal unipotent 3-block
$B$ above $D_4$ the defect groups have order $|D|=3(q-\eps)^3$ where
$q\equiv\eps\pmod3$ (see loc.~cit.), and again we can compute the number of
characters in $\Irr(B)$ using \cite[Thm.~B]{En00}. The centralisers of
3-elements $1\ne t\in G^*$ containing a subgroup of type $D_4$ are of types
$$D_4,\ \tw3D_4,\ D_5,\ D_4+A_1,\ D_6,\ D_5+A_1,\text{ and }E_6$$
when $d=1$, or their Ennola duals for $d=2$,
with $|\Irr(B)\cap\cE(G,t)|=1,1,2,2,5,4,3$ respectively, but all of these have
at least 28 conjugates in $P$; this also makes up for the 10 characters
contributed by $\Irr(B)\cap\cE(G,1)$.
\par
For $G=E_8(q)$ we need to consider $\ell=2,3,5$. The relevant unipotent blocks
for $\ell=2$ are those above the $d$-cuspidal unipotent characters of $E_6(q)$
(or $\tw2E_6(q)$), and for $\ell=3,5$ those above the $d$-cuspidal unipotent
character of $D_4(q)$. The non-principal unipotent block for $\ell=2$ has
defect groups of
order $4(q-\eps)_2^2$ (see \cite[p.~364]{En00}) with $q\equiv\eps\pmod4$,
and only the Lusztig series $\cE(G,t)$ with $t$ a 2-element with centraliser
of type $\tw{(2)}E_6$, $E_7$ or $\tw{(2)}E_6+A_1$ will contribute, with
$|\Irr(B)\cap\cE(G,t)|=1,2,2$ respectively, in addition to the six unipotent
characters. Again, the inequality is easily seen to hold. The computations
for $\ell=3,5$ are similar to those done previously. This completes the
discussion of all unipotent blocks of exceptional type groups.
\end{proof}

\subsection{Isolated 5-blocks in $E_8(q)$} \label{subsec:E8}
The only simple groups of Lie type for which $5$ is a bad prime are those of
type $E_8$. In view of Proposition~\ref{prop:Enguehard} and
Theorem~\ref{thm:unip exc}, in order to complete the proof of Theorem~1 for
$p=5$ it therefore remains to treat the isolated 5-blocks of $E_8(q)$. These
have been classified in \cite[Prop.~6.10 and~6.11]{KM13}, but without
determining the precise character distribution.

\begin{prop}   \label{prop:E8 l=5}
 Let $B$ be an isolated 5-block of $E_8(q)$. Then $B$ is not a minimal
 counterexample to the $k(B)$-conjecture in the strong form.
\end{prop}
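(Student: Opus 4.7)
The plan is to enumerate, using the classification of isolated $5$-classes in \cite[Props.~6.10 and~6.11]{KM13}, the isolated non-unipotent $5$-blocks of $G=E_8(q)$ --- the unipotent case having been dispatched by Theorem~\ref{thm:unip exc} --- and to verify the strong $k(B)$-conjecture for each of them by direct computation. Up to Ennola duality (depending on whether $5\mid q-1$ or $5\mid q+1$), there is essentially a single relevant isolated conjugacy class in $G^*=E_8(q)$: a semisimple $5'$-element $s\neq 1$ whose connected centraliser $C_{\bG^*}^\circ(s)$ is of type $A_4+A_4$ and whose component group $C_{\bG^*}(s)/C_{\bG^*}^\circ(s)$ has order $5$.

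For each such block $B\subseteq\cE_5(G,s)$, I would first describe the defect group $D$ and the set $\Irr(B)$ using the Cabanes--Enguehard/Enguehard machinery (cf.~\cite{En08}): up to conjugation $D$ embeds in a Sylow $5$-subgroup of $C_{G^*}(s)^F$, and if $D$ is abelian of rank at most $3$ we are done by Theorem~\ref{thm:sam}. So we may assume that $D$ contains a non-abelian Sylow $5$-subgroup of an $\SL_5(\pm q)$-factor of $C_{\bG^*}^\circ(s)^F$. Using Jordan decomposition as in the proof of Lemma~\ref{lem:one block}, $\Irr(B)$ breaks up as the disjoint union, over $C_{G^*}(s)$-classes of $5$-elements $t\in C_{G^*}(s)$, of the series $\cE(G,st)$, each in bijection with the unipotent series $\cE(C_{G^*}(st),1)$. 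Since each centraliser $C_{G^*}(st)$ is a subsystem subgroup whose semisimple part is of type $A_4+A_4$, $2A_4$ (with a twist permuting the two factors, producing an $\SL_5(q^5)$-type factor), or a proper subsystem thereof, its number of unipotent characters is a product of partition-type counts entirely analogous to those manipulated in Theorem~\ref{thm:SLn}. Summing these counts over the finitely many $5$-element classes and comparing with $|D|$ yields the required strict inequality $k(B)<|D|$.

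The main obstacle is precisely that the component group has order $\ell=5$: this means Lemma~\ref{lem:one block} cannot be invoked verbatim (the centraliser is disconnected) and Proposition~\ref{prop:Enguehard} does not apply ($5$ is bad for $E_8$). One therefore has to treat explicitly the contributions of $5$-elements $t$ lying in the non-identity components of $C_{\bG^*}(s)$, whose centralisers involve twists of the two $A_4$ factors, and to apply Clifford theory to descend from $C_{\bG^*}^\circ(s)^F$ to $C_{\bG^*}(s)^F$ when counting characters and reading off defect groups. Fortunately the set of relevant $5$-element classes is small and its structure is essentially independent of $q$, while $|D|$ grows as a fixed polynomial in $q$ of large degree, so the case-by-case verification reduces to a short finite check that can be pushed through uniformly in $q$.
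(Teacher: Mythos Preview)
Your proposal rests on a misidentification of the relevant isolated classes. In $E_8$ the isolated semisimple element whose connected centraliser is of type $A_4+A_4$ has order~$5$ in the adjoint group (this is exactly the node of index~$5$ in the extended Dynkin diagram); since $Z(\bG)=1$ for $E_8$ there is no way to realise such a centraliser by a $5'$-element. Hence the $A_4+A_4$ class is precisely the one that does \emph{not} appear when you list the isolated $5'$-elements $s\neq 1$ labelling the series $\cE_5(G,s)$. The genuinely relevant isolated $5'$-classes are those of orders $2$, $3$, $4$, $6$, with centraliser types $D_8$, $E_7A_1$, $E_6A_2$, $\tw2E_6\,\tw2\!A_2$, $A_8$/$\tw2\!A_8$, $D_5A_3$, $A_7A_1$/$\tw2\!A_7A_1$, $A_5A_2A_1$/$\tw2\!A_5\,\tw2\!A_2A_1$, etc., exactly as listed in \cite[Tab.~7 and~8]{KM13}. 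Your entire case analysis (centralisers of $5$-elements inside $A_4+A_4$, the $\SL_5(\pm q)$-factor, the disconnected component group of order~$5$) is therefore aimed at the wrong target and would have to be discarded.

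The paper's argument proceeds quite differently. It first uses Lemma~\ref{lem:one block} and Theorem~\ref{thm:sam} as genuine reductions: any isolated $s$ with connected centraliser for which $\cE_5(G,s)$ is a single block, or whose blocks have abelian defect of rank~$\le3$, is eliminated outright. What survives is a short explicit list (the paper's Table~\ref{tab:E8 l=5}) of blocks parametrised by $d$-cuspidal pairs $(\bL,\la)$ inside centralisers of the types above. For each surviving block one then bounds $k(B)$ by restricting, via Harish-Chandra theory, which Lusztig series $\cE(G,st)$ can contribute (namely those for which $C_{G^*}(st)$ contains a Levi of the appropriate type, e.g.\ $D_4$), and invokes the explicit counts from Proposition~\ref{prop:kB BC}, Corollary~\ref{cor:kB D} and Olsson's formulae to compare with the known $|D|$. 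So the architecture you sketched --- Jordan decomposition plus summing unipotent counts over $5$-element classes --- is sound in spirit, but it must be run over the correct list of centraliser types, and the key filtering device is the $d$-cuspidal pair $(\bL,\la)$ attached to the block rather than anything about component groups.
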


\begin{proof}
By Theorem~\ref{thm:unip exc} we may assume that $B$ is not unipotent. Thus
$B$ lies in $\cE_5(G,s)$ for an isolated semisimple $5'$-elements $1\ne s$
of $G^*$. According to Lemma~\ref{lem:one block} we may assume that $\cE_5(G,s)$
is not a single 5-block.
Now by \cite[Tab.~7 and~8]{KM13} the only non-unipotent isolated 5-blocks of
$E_8(q)$ for which the defect groups are not abelian of rank at most~3 and for
which $\cE_5(G,s)$ is not a single 5-block are those collected in
Table~\ref{tab:E8 l=5}, where we have kept the numbering from loc.~cit.
The two unnumbered lines correspond to 5-blocks that had inadvertently been
omitted. All cases occur when $q\equiv1\pmod5$, that is, $d=d_5(q)=1$, except
for the one in the last line which occurs when $q\equiv2,3\pmod5$. The case of
$q\equiv4\pmod5$ is entirely analogous to the one where $q\equiv1\pmod5$.
In the last column we provide the size of a defect group $D$ as described in
\cite[Thm.~1.2(b)]{KM13}, where $5^a$ denotes the precise power of~5 dividing
$q^d-1$.

\begin{table}[htb]
\caption{Some isolated 5-blocks in $E_8(q)$}   \label{tab:E8 l=5}
$\begin{array}{|r|r|lll|}
\hline
 \text{No.}& C_{\bG^*}(s)^F& \bL&  \la&  |D|\\
\hline\hline
 1&       D_8(q)& \emptyset& 1& 5^{8a+1}\\
 2&             &     D_4& D_4[1]& 5^{4a}\\
\hline
 3& E_7(q)A_1(q)& \emptyset& 1& 5^{8a+1}\\
 4&             &     D_4& D_4[1]& 5^{4a}\\
\hline
 7& D_5(q)A_3(q)& \emptyset& 1& 5^{8a+1}\\
 8&             & D_4& D_4[1]& 5^{4a}\\
\hline
11& \tw2A_7(q)A_1(q)& A_1^3&  1& 5^{5a}\\
\hline
14&    \tw2A_8(q)& A_1^4&  1& 5^{4a}\\
\hline
16& E_6(q)A_2(q)& \emptyset& 1& 5^{8a+1}\\
17&              &     D_4& D_4[1]& 5^{4a}\\
\hline
19& \tw2E_6(q).\tw2A_2(q)& A_1^3&  1& 5^{5a}\\
20&                      & D_4&  (\tw2A_2,\phi_{21})& 5^{4a}\\
\hline
 & \tw2A_5(q).\tw2A_2(q)A_1(q)& A_1^3&  1& 5^{5a}\\
 &                       & D_4&  (\tw2A_2,\phi_{21})& 5^{4a}\\
\hline\hline
25&                D_8(q)& \Ph4^4&  1& 5^{4a}\\
\hline
\end{array}$
\end{table}

First, consider cases~1 and~2. Here $\cE_5(G,s)$ contains two 5-blocks, and the
precise subdivision of characters among these blocks is not known. In case~2,
all characters in the corresponding block $B_2$ must have degree divisible by
$|G:D|_5=5^2(q-1)_5^4=5^{4a+2}$. But this does not hold for characters in the
principal series of $G$. The only other Harish-Chandra series occurring in a
Lusztig series $\cE(G,st)$, with $t\in C_{G^*}(s)$ as 5-element, are the ones
above a cuspidal unipotent character of a split Levi subgroup of type $D_4$.
In particular, $\cE(G,st)$ only contributes
to $\Irr(B_2)$ if $C_{G^*}(st)$ contains $D_4(q)$, that is, if $t$ centralises
a subgroup $D_4(q)$ in $C_{G^*}(s)=D_8(q)$. The centraliser of $D_4(q)$ inside
$D_8(q)$ is a subgroup $D_4(q)$, as can be seen from the extended Dynkin
diagram, so our claim for $B_2$ follows again with Corollary~\ref{cor:kB D}.
The exact same reasoning applies in cases~8 and~17 (note that by
\cite[Tab.~7]{KM13} the characters in the Harish-Chandra series above a
cuspidal character of $E_6(q)$ do not lie in the 5-block corresponding to
case~17). To deal with the case~1 we claim that the total number of characters
in $\cE_5(G,s)$ is less
than $|D|$; for this note that for any possible centraliser $C_{G^*}(st)$ the
number of non-principal series characters in $\cE(G,st)$ is at most one sixth
of the total number of characters. So $|\cE_5(G,s)|$ is at most 6/5 of the
number obtained from the principal series characters, which we determined in
Corollary~\ref{cor:kB D}. Evaluation of that formula shows our claim. \par
Cases~3,7,11,14,16,19 and~25 are settled completely analogously.
\par
For the unnumbered lines, by Proposition~\ref{prop:isogeny} it suffices to
count the characters in a simply connected covering
$H=\SU_6(q)\times\SU_3(q)\times \SL_2(q)$ of the centraliser $C_{G^*}(s)$.
By Olsson's formula for $\GU_n(q)$ we have that
$$|\cE_5(\GU_6(q)\times\GU_3(q)\times \GU_2(q),1)|
  \le 5^{4a}\cdot 5^{2a}\cdot5^{2a}=5^{8a}.$$
Now by Lusztig's result \cite[Thm.~15.11]{CE} all characters of
$\GU_6(q)\times\GU_3(q)\times \GU_2(q)$ corresponding to $5$-elements restrict
irreducibly to $H$, and since $H$ has index divisible by $5^{3a}$ it follows
that 
$$|\cE_5(\SU_6(q)\times\SU_3(q)\times \SL_2(q),1)|\le 5^{8a}/5^{3a}=5^{5a}.$$
So the first block with this centraliser (which has abelian defect groups)
satisfies the $k(B)$-conjecture. The characters in the other block must have
degree divisible by $|G:D|_5=5^{4a+2}$, so they must lie above the cuspidal
unipotent character of the factor $\SU_3(q)$. Here, the count for the middle
factor is just $5^a$, and again the desired inequality follows. A similar
argument can be employed to deal with the remaining two cases~4 and~20.
\end{proof}

\subsection{Isolated 3-blocks with abelian defect} \label{subsec:l=3}

\begin{prop}   \label{prop:l=3}
 Let $B$ be an isolated 3-block of a quasi-simple group of Lie type with
 abelian defect groups. Then $B$ is not a minimal counterexample to the
 $k(B)$-conjecture.
\end{prop}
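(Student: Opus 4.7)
The plan is to mirror the reduction strategy of Sections~\ref{sec:lie-nondef}--\ref{sec:exc} and peel off cases that have already been settled. By Proposition~\ref{prop:spor} and Lemma~\ref{lem:suzree} I may assume that $G=\bG^F$ with $\bG$ simple simply connected, $F$ a Frobenius map, and $G$ neither Suzuki/Ree nor an exceptional cover. Write $B\subseteq\cE_3(G,s)$ with $s\in G^*$ isolated of order prime to~$3$. If $s=1$, the block is unipotent and so falls under Theorem~\ref{thm:SLn} or Theorem~\ref{thm:class} (classical types) or Theorem~\ref{thm:unip exc} (exceptional types), with no hypothesis on the defect required. If $s\ne1$ and $3$ is good for $\bG$, Proposition~\ref{prop:Enguehard} applies and finishes the argument. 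Hence I am reduced to $s\ne1$ and $\bG$ of type $G_2$, $\tw3D_4$, $F_4$, $E_6$, $\tw2E_6$, $E_7$ or $E_8$, of which the first two are already handled by the results of Digne--Michel and Hiss cited at the start of Section~\ref{sec:exc}.

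For each of the five remaining exceptional families I would enumerate the non-trivial isolated semisimple $3'$-classes of~$\bG^{*F}$; this is a short explicit list in which the connected centraliser $C^\circ:=C_{\bG^*}^\circ(s)$ is always a central product of classical groups, so that $3$ is good for $C^\circ$. Via Lusztig's Jordan decomposition and the Bonnaf\'e--Dat--Rouquier correspondence (the tool already used in the proof of Lemma~\ref{lem:not qi}), $B$ corresponds to a unipotent $3$-block $b$ of a suitable $\ell'$-extension of $C^{\circ F}$ with an isomorphic defect group; in particular $k(B)=k(b)$ and the defect of $b$ is still abelian. Then Lemma~\ref{lem:ab def} applied to $b$ yields the bound, except when $3$ divides $|Z(C^{\circ F})|$, which only occurs if $C^\circ$ has a type-$A$ factor; in that case Theorem~\ref{thm:SLn} takes over, possibly combined with Theorem~\ref{thm:sam} when the rank of the defect group is at most~$3$.

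The main obstacle is that Bonnaf\'e--Dat--Rouquier does not directly apply for isolated $s$, so a Clifford-theoretic substitute is needed to handle the possibly disconnected component group $C_{\bG^*}(s)/C^\circ$. For certain involutions in $E_6,E_7,E_8$ and the order-$4$ elements in $E_7,E_8$ this component group is non-trivial and $\cE_3(G,s)$ splits into several $3$-blocks with their own defect groups. In those configurations I would imitate the treatment of the unnumbered rows of Table~\ref{tab:E8 l=5} in the proof of Proposition~\ref{prop:E8 l=5}: parametrise characters block-by-block by Lusztig series inside $C^{\circ F}$, count them using the formulas of Proposition~\ref{prop:kB BC} and Corollary~\ref{cor:kB D} for the classical factors, and verify $k(B)\le|D|$ in each case. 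This case-by-case bookkeeping (especially verifying that the abelian-defect hypothesis survives each descent) is where the real work lies; the reductive-group inputs themselves are all standard.
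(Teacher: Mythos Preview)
Your initial reductions (unipotent via Theorems~\ref{thm:SLn}, \ref{thm:class}, \ref{thm:unip exc}; good primes via Proposition~\ref{prop:Enguehard}; small exceptional types via \cite{DM87}, \cite{Hi}) are correct and match the paper. The divergence is in how you handle the remaining isolated non-unipotent $3$-blocks of $F_4,E_6,\tw2E_6,E_7,E_8$, and there the proposal has a genuine gap.

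The central step you propose---passing from $B$ to a unipotent block $b$ of (an extension of) $C^{\circ F}$ via the Bonnaf\'e--Dat--Rouquier correspondence---does not go through: as you yourself note, \cite[Thm.~7.7]{BDR15} requires $C_{\bG^*}^\circ(s)$ to sit in a proper $F$-stable Levi, which is exactly what fails for isolated~$s$. Your suggested substitutes (``Clifford-theoretic'' descent through the component group, imitating the unnumbered rows of Table~\ref{tab:E8 l=5}) are not arguments but promissory notes; in particular there is no mechanism in the proposal that guarantees $k(B)=k(b)$ with isomorphic defect groups, which is precisely what you need for a minimality argument. So as written the plan would stall at the first non-trivial isolated class.

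The paper avoids this obstacle entirely by two tools you do not invoke. First, Lemma~\ref{lem:one block}: when $\cE_3(G,s)$ is a single $3$-block (and the corresponding $\cE_3(C,1)$ likewise), Jordan decomposition at the level of character counts already shows $B$ is not a minimal counterexample---no Morita equivalence is needed. Second, the explicit classification tables \cite[Tab.~2--4,~6]{KM13}: combined with Theorem~\ref{thm:sam} (abelian rank~$\le3$) and Lemma~\ref{lem:one block}, these reduce everything to a single configuration, namely $E_8(q)$ with $C_{G^*}(s)$ of type $\tw2A_4(q)^2$. That lone case is then dispatched by observing that the other blocks in $\cE_3(G,s)$ have strictly smaller defect, so the height-zero characters (equivalently, the principal-series characters) of $\cE_3(G,s)$ are exactly $\Irr(B)$, and Theorem~\ref{thm:SLn} bounds those. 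Your route, even if the BDR gap could be patched, would replace this two-line endgame with a full enumeration of isolated $3'$-classes across five exceptional families.
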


\begin{proof}
By Lemma~\ref{lem:ab def} and Theorems~\ref{thm:SLn} and~\ref{thm:unip exc} we
may assume that $B$ is not unipotent. Thus $B$ lies in $\cE_3(G,s)$ for an
isolated semisimple $3'$-element $1\ne s$ of $G^*$. Moreover, $G$ is not of
classical type by Proposition~\ref{prop:Enguehard}. According to
Lemma~\ref{lem:one block} we may assume that $\cE_3(G,s)$ is not a single
3-block. Now by \cite[Tab.~2, 3, 4 and~6]{KM13} the only non-unipotent
isolated 3-block $B$ with abelian defect groups of rank at least~4 for
which $\cE_3(G,s)$ is not a single 3-block occurs in $E_8(q)$ with
$C_{G^*}(s)$ of type $\tw2A_4(q)^2$. Now note that the other two blocks in
$\cE_3(G,s)$ have smaller defect groups. But then according to the main result
of \cite{KM13} the characters of $\cE_3(G,s)$ in $B$ are exactly those of
height zero, hence those lying in the principal Harish-Chandra series. For
these we showed the validity of the required inequality in
Theorem~\ref{thm:SLn}.
\end{proof}

We conclude by proving Theorems~1, 2 and~3. Let $(G,B)$ be a minimal
counterexample to Brauer's $k(B)$-conjecture in the strong form with $G$
quasi-simple. Then $G$ must be of Lie type by Proposition~\ref{prop:spor}.
Theorem~\ref{thm:lie-p} shows that $p$ is not the defining prime. The block $B$
is not unipotent for $p\ne2$ by Theorems~\ref{thm:SLn}, \ref{thm:class}
and~\ref{thm:unip exc}. It is shown in Lemma~\ref{lem:not qi} that $B$ must
be isolated. The isolated blocks for good primes $p\ge3$ are not minimal
counterexamples by Enguehard's result in Proposition~\ref{prop:Enguehard},
and the isolated 5-blocks of $E_8(q)$ are neither by
Proposition~\ref{prop:E8 l=5}. This achieves the proof of Theorem~1.
\par
Now consider Theorem~2 on blocks with abelian defect groups. In the case $p=3$
the claim follows with Proposition~\ref{prop:l=3}. Now assume that $B$ is a
quasi-isolated 2-block with abelian defect. Then by \cite[Lemma~5.2]{EKKS} we
have that either defect groups of $B$ have rank at most~2, or $G$ is of
type~$A$ and $B$ is quasi-isolated but not isolated. In this case we may
conclude by Lemma~\ref{lem:not qi}. Finally, the principal block of a group
of Lie type is unipotent, so the last assertion of Theorem~3 also follows.


\end{document}